\numberwithin{equation}{section}
\definecolor{darkblue}{rgb}{0.0,0,0.7} 
\definecolor{darkred}{rgb}{0.7,0,0} 
\definecolor{darkgreen}{rgb}{0, .6, 0} 
\newcommand{\defncolor}{\color{darkred}}
\newcommand{\defn}[1]{{\defncolor\emph{#1}}} 
\newtheorem{thm}[equation]{Theorem}
\newtheorem{cor}[equation]{Corollary}
\newtheorem{lemma}[equation]{Lemma}
\newtheorem{prop}[equation]{Proposition}
\theoremstyle{definition}
\newtheorem{rem}[equation]{Remark}
\newtheorem{example}[equation]{Example}
\newcommand{\CC}{\mathbb{C}}
\newcommand{\RR}{\mathbb{R}}
\newcommand{\ZZ}{\mathbb{Z}}  
\newcommand{\Sym}{\mathbb{S}}
\newcommand{\HH}{\mathbb{H}}
\newcommand{\GG}{\mathbb{G}}
\newcommand{\n}{\mathbf{n}}
\renewcommand{\v}{\mathbf{v}\hskip-1pt}
\renewcommand{\u}{\mathbf{u}}
\newcommand{\T}{{{\mathtt{T}}}}  
\renewcommand{\S}{{\mathtt{S}}} 
\newcommand{\C}{{\mathtt{C}}}
\newcommand{\R}{{\mathtt{R}}}
\newcommand{\U}{{\mathtt{U}}}
\newcommand{\Th}{{{T}}}  
\newcommand{\wt}{\mathsf{wt}}
\newcommand{\inv}{\mathsf{inv}}
\newcommand{\calA}{\mathcal{A}}  
\newcommand{\B}{\mathcal{B}}
\newcommand{\calN}{\mathcal{N}}
\newcommand{\calV}{\mathcal{V}}
\newcommand{\YT}{\mathrm{YT}}
\newcommand{\SYT}{\mathrm{SYT}}
\newcommand{\rpar}{{\underline{\lambda}}}
\title[
Transition Matrices]{Transition matrices between Young's natural\\ and seminormal representations 
}
\author[S. Armon]{Sam Armon}
\address{Department of Mathematics \newline\indent
University of Southern California \newline\indent
Los Angeles, California 90089.}
\email{armon@usc.edu}
\author[T. Halverson]{Tom Halverson}
\address{Department of Mathematics\\ Statistics\\ and Computer Science \newline\indent
Macalester College \newline\indent
Saint Paul\\ MN 55105  }
\email{halverson@macalester.edu}
\thanks{The authors gratefully acknowledge support from Simons Foundation grant 283311.}
\begin{document}

\date{\today}
\maketitle


\begin{abstract}
We derive a formula for the entries in the change-of-basis matrix between Young's seminormal and natural representations of the symmetric group. These entries are determined as sums over weighted paths in the weak Bruhat graph on standard tableaux, and we show that they can be computed recursively as the weighted sum of at most two previously-computed entries in the matrix. We generalize our results to work for affine Hecke algebras, Ariki-Koike algebras, Iwahori-Hecke algebras, and complex reflection groups given by the wreath product of a  finite cyclic group with the symmetric group. 
\end{abstract}

\vspace{0.5cm} 

\noindent
{\small \emph{Mathematics Subject Classification} (2010): MSC 05E10, MSC 05E18, MSC 20C08, MSC 20C30}

\noindent
{\small \emph{Keywords}: Symmetric group, affine Hecke algebra, tableaux, seminormal representation, natural representation}


\section{Introduction}
\label{sec:intro}

Young \cite{Yng2} defined three bases of the complex irreducible symmetric group module $\Sym_n^\lambda$ for each integer partition $\lambda \vdash n$. The corresponding representations are now called Young's \emph{natural}, \emph{seminormal}, and \emph{orthogonal} representations  (see \cite{Ru}, \cite{JK}, and \cite{Sa} for modern accounts). The seminormal and orthogonal representations are related by a diagonal transition matrix known to Rutherford \cite{Ru} (and, likely, Young), whereas the seminormal and natural representations are related by a triangular transition  matrix \cite[Sec.~5]{Ram-skew} whose entries are previously unknown rational expressions involving the contents of the boxes in the tableaux of shape $\lambda$ (see \cite{ryom2009denominators}). In this paper, we compute these change-of-basis coefficients as sums of weights on paths in the weak Bruhat graph on standard tableaux of shape $\lambda$ (see Theorem \ref{thm:main}). 

Our methods work equally well for skew-shape representations \cite{JamesPeel,GarsiaWachsSkew,Ram-skew}, and we derive them in that context. For each skew partition shape $\lambda/\mu$ with $n$ boxes, the skew-shape module $\Sym_n^{\lambda/\mu}$ has dimension equal to the number of standard tableaux of shape $\lambda/\mu$, and the module  $\Sym_n^{\lambda/\mu}$ is irreducible if and only if $\mu = \emptyset$ and $\lambda = \lambda/\emptyset$ is a partition of $n$. 

To compute our change-of-basis we use the weak Bruhat graph $\mathcal{B}_n^{\lambda/\mu}$, which is the Hasse diagram of  weak order on standard tableaux  \cite{BW}. The vertices of  $\mathcal{B}_n^{\lambda/\mu}$ are indexed by the standard tableaux  $\SYT(\lambda/\mu)$ of shape $\lambda/\mu$, and for $\S,\T \in \SYT(\lambda/\mu)$ there is an edge $\S \overset{s_{i}}{\longrightarrow} \T$, labeled by the simple (adjacent) transposition $s_i= (i,i+1) \in \Sym_n$, if $s_i(\S) = \T$. The weight on $\S \overset{s_{i}}{\longrightarrow} \T$ is the reciprocal of the the ``axial" distance  between $i$ and $i+1$ in $\S$, or, equivalently, the difference between the content of the box containing $i$ in $\S$ and the content of the box containing $i$ in $\T$. The weights on $\mathcal{B}_n^{\lambda/\mu}$ encode the seminormal matrix entries,  illustrated in Example \ref{eg:seminormalFromBruhat},  as was observed by Lascoux \cite{lascoux2001youngs}.
Russell and Tymoczko \cite{RussellTymoczko} use $\mathcal{B}_{2n}^{(n,n)}$ to study properties of the change-of-basis between the natural basis and the web basis in the  case where $\lambda = (n,n)$.

The seminormal basis $\{\v_\T \mid \T \in \SYT(\lambda/\mu)\}$ and the natural basis $\{\n_\T \mid \T \in \SYT(\lambda/\mu)\}$ satisfy three properties that are the key ingredients to our work: 
(1) The action of a simple transposition $s_i$ on the seminormal basis satisfies $s_i \v_\T = a_i \v_\T + b_i \v_{s_i (\T)}$ for scalars $a_i,b_i$;  
(2)  the action of $\sigma \in \Sym_n$ on the natural basis satisfies  $\sigma \n_\T = \n_{\sigma(\T)}$  when $\sigma(\T)$ is standard; and 
(3) the seminormal and natural basis vectors are equal (up to a scalar) at the column reading tableau $\C$ (the minimal element in Bruhat order), i.e., $\n_\C = \v_\C$.

Ram \cite{Ram-skew} observed that with these three properties the natural basis is completely derivable (up to a scalar) from the seminormal basis, and it is this technique that we use to derive our transition coefficients. Furthermore, Ram proved that generalzed versions of these same three properties  hold for the calibrated skew-shape representations of affine Hecke algebras. Using this fact we extend our result to give the transition matrix between the natural and seminormal representations, in the semisimple cases, of the following algebras: (1) affine Hecke algebras of type A; (2) cyclotomic Hecke algebras (i.e., Ariki-Koike algebras), (3) Iwahori-Hecke algebras of type A and B; and (4) the group algebras of the complex reflection groups $\GG_{r,n} = \ZZ_r \wr \Sym_n$. The proof of our result for the symmetric group (Theorem \ref{thm:main}) generalizes essentially identically to work for all of these other algebras. We could have proved the result for the affine Hecke algebra and then specialized it to work in the other cases, including the symmetric group algebra. However, we chose to organize this paper with the symmetric group first in order to get to this fundamental result without the extra notation needed to describe Hecke algebra representations.

For each of these algebras, the transition matrix is upper triangular under any linear ordering on tableaux that extends Bruhat order. In Proposition \ref{prop:diagonals}, we show  that the diagonal entries of this matrix have a nice form as a product over inversions in the corresponding tableau. In Theorem \ref{thm:orthogonal-transition}, we give the diagonal transition matrix between the seminormal and orthogonal representations, also as a product over the inversions in the tableaux. 

In Corollary \ref{cor:recursive} and Remark \ref{rem:q-recursive}, we show that   the transition matrix entries can be computed recursively in such a way that each entry is the weighted sum of at most two entries in a previous column. In this way, it takes $\mathcal{O}((f^{\lambda/\mu})^2)$ operations to compute the transition matrix, where $f^{\lambda/\mu}$ is the number of standard tableaux of shape $\lambda/\mu$, as discussed in Remark \ref{rem:complexity}.  This is useful in computing the fast Fourier transform (FFT) on these algebras. The seminormal basis is well adapted to the FFT, because the matrices of the generators are sparse and restrict nicely to subgroups (see, for example,  \cite{CB,DR}).  However,  applications may demand using the natural basis, and, in this event, one can  compute the FFT in the seminormal basis and convert the final result to the natural basis by conjugating by the transition matrix.

For the cyclotomic Hecke algebras and complex reflection groups $\GG_{r,n}$, skew shapes are identified with $r$-tuples of partitions having a total of $n$ boxes. In this case, the transition matrix decomposes as the direct sum of identical copies of an $r$-fold tensor product of symmetric group transition matrices (see Corollary \ref{cor:decomp} and Example \ref{eg:tensor}).  In Section \ref{sec:examples}, we give the Bruhat graphs and all of the nontrivial transition matrices for the irreducible symmetric group modules for $\Sym_n$ with $n \le 5$, along with a transition matrix for an irreducible Hecke algebra module.

\section{The Bruhat Graph on Standard Tableaux}
\label{sec:BruhatGraph}

The symmetric group $\Sym_n$ of permutations on $\{1,2, \ldots, n\}$ is generated by the simple (adjacent) transpositions $s_i = (i,i+1)$, which swap $i$ and $i+1$ for $1 \le i \le n-1$, subject to the relations,
\begin{equation}\label{S_n-relations}
\begin{array}{clll}
\text{(a)} & s_i s_j = s_j s_i, & |i-j| >1, &\qquad \\
\text{(b)} & s_i s_{i+1} s_i = s_{i+1} s_i s_{i+1}, & 1 \le i \le n-2, \\
\text{(c)} & s_i^2 = 1, & 1 \le i \le n-1.
\end{array}
\end{equation}
Each permutation $w \in \Sym_n$ can be written as a product (a word) $w = s_{i_1} s_{i_2} \cdots s_{i_t}$ of simple transpositions. The product $w = s_{i_1} s_{i_2} \cdots s_{i_t}$ is a \defn{reduced word} if $t$ is minimal, and in this case the \defn{length} of $w$ is $\ell(w) = t$.

\subsection{Young tableaux}\label{sec:tableaux} A partition of $n \in \ZZ_{\ge 1}$ is a sequence $\lambda = (\lambda_1, \lambda_2, \ldots,  \lambda_\ell)$ such that $\lambda_1 \ge \lambda_2 \ge \cdots \ge \lambda_\ell > 0$ and $|\lambda|:=\lambda_1  + \cdots + \lambda_\ell = n$. We write $\lambda \vdash n$ to signify that $\lambda$ is a partition of $n$ and identify $\lambda$ with its diagram consisting of $\lambda_i$ boxes left-justified in row $i$.  If $\mu = (\mu_1, \mu_2, \ldots,  \mu_k)$ and $\lambda = (\lambda_1, \lambda_2, \ldots,  \lambda_\ell)$ are partitions of $m$ and $m+n$ respectively, then $\mu \subseteq \lambda$ if $0 < \mu_i \le \lambda_i$ for $i = 1, 2, \ldots, k$, and the skew shape $\lambda/\mu$ consists of the boxes that are in $\lambda$ but not in $\mu$. 
If $\mu = \emptyset$, then $\lambda =\lambda/\emptyset$. If $\lambda/\mu$ is a skew shape with $n$ boxes, then a \defn{Young tableau} of shape $\lambda/\mu$ is a filling of the boxes of $\lambda/\mu$ with the integers $1, \ldots, n$ such that each integer appears exactly once. In a \defn{standard Young tableau}  the entries increase from left to right in each row and from top to bottom in each column.  We denote the set of  Young tableaux of shape $\lambda/\mu$ by $\YT(\lambda/\mu)$ and the set of standard Young tableaux of shape $\lambda/\mu$ by $\SYT(\lambda/\mu)$, so for example,
$$
\SYT((3,2))  = \left\{
\begin{array}{ccccc}
\begin{tikzpicture}[scale=.37,line width=1.0pt] 
\draw (0,0) rectangle (1,1); \path (0.5,0.5) node {$1$};
\draw (0,-1) rectangle (1,0); \path (0.5,-0.5) node {$2$};
\draw (1,0) rectangle (2,1); \path (1.5,0.5) node {$3$};
\draw (1,-1) rectangle (2,0); \path (1.5,-0.5) node {$4$};
\draw (2,0) rectangle (3,1); \path (2.5,0.5) node {$5$};
\end{tikzpicture},
&
\begin{tikzpicture}[scale=.37,line width=1.0pt] 
\draw (0,0) rectangle (1,1); \path (0.5,0.5) node {$1$};
\draw (0,-1) rectangle (1,0); \path (0.5,-0.5) node {$3$};
\draw (1,0) rectangle (2,1); \path (1.5,0.5) node {$2$};
\draw (1,-1) rectangle (2,0); \path (1.5,-0.5) node {$4$};
\draw (2,0) rectangle (3,1); \path (2.5,0.5) node {$5$};
\end{tikzpicture},
&
\begin{tikzpicture}[scale=.37,line width=1.0pt] 
\draw (0,0) rectangle (1,1); \path (0.5,0.5) node {$1$};
\draw (0,-1) rectangle (1,0); \path (0.5,-0.5) node {$2$};
\draw (1,0) rectangle (2,1); \path (1.5,0.5) node {$3$};
\draw (1,-1) rectangle (2,0); \path (1.5,-0.5) node {$5$};
\draw (2,0) rectangle (3,1); \path (2.5,0.5) node {$4$};
\end{tikzpicture},
&
\begin{tikzpicture}[scale=.37,line width=1.0pt] 
\draw (0,0) rectangle (1,1); \path (0.5,0.5) node {$1$};
\draw (0,-1) rectangle (1,0); \path (0.5,-0.5) node {$3$};
\draw (1,0) rectangle (2,1); \path (1.5,0.5) node {$2$};
\draw (1,-1) rectangle (2,0); \path (1.5,-0.5) node {$5$};
\draw (2,0) rectangle (3,1); \path (2.5,0.5) node {$4$};
\end{tikzpicture},
&
\begin{tikzpicture}[scale=.37,line width=1.0pt] 
\draw (0,0) rectangle (1,1); \path (0.5,0.5) node {$1$};
\draw (0,-1) rectangle (1,0); \path (0.5,-0.5) node {$4$};
\draw (1,0) rectangle (2,1); \path (1.5,0.5) node {$2$};
\draw (1,-1) rectangle (2,0); \path (1.5,-0.5) node {$5$};
\draw (2,0) rectangle (3,1); \path (2.5,0.5) node {$3$};
\end{tikzpicture}
\end{array}
\right\}
$$
and  
$$
\SYT((3,3,1)/(2,1)) = \left\{\!\!
\begin{array}{cccccccc}
\begin{tikzpicture}[scale=.37,line width=1.0pt] 
\draw (0,-1) rectangle (1,0); \path (0.5,-0.5) node {$1$};
\draw (1,0) rectangle (2,1); \path (1.5,0.5) node {$2$};
\draw (2,1) rectangle (3,2); \path (2.5,1.5) node {$3$};
\draw (2,0) rectangle (3,1); \path (2.5,0.5) node {$4$};
\end{tikzpicture},\!
&
\begin{tikzpicture}[scale=.37,line width=1.0pt] 
\draw (0,-1) rectangle (1,0); \path (0.5,-0.5) node {$1$};
\draw (1,0) rectangle (2,1); \path (1.5,0.5) node {$3$};
\draw (2,1) rectangle (3,2); \path (2.5,1.5) node {$2$};
\draw (2,0) rectangle (3,1); \path (2.5,0.5) node {$4$};
\end{tikzpicture},\!
&
\begin{tikzpicture}[scale=.37,line width=1.0pt] 
\draw (0,-1) rectangle (1,0); \path (0.5,-0.5) node {$2$};
\draw (1,0) rectangle (2,1); \path (1.5,0.5) node {$1$};
\draw (2,1) rectangle (3,2); \path (2.5,1.5) node {$3$};
\draw (2,0) rectangle (3,1); \path (2.5,0.5) node {$4$};
\end{tikzpicture},\!
&
\begin{tikzpicture}[scale=.37,line width=1.0pt] 
\draw (0,-1) rectangle (1,0); \path (0.5,-0.5) node {$2$};
\draw (1,0) rectangle (2,1); \path (1.5,0.5) node {$3$};
\draw (2,1) rectangle (3,2); \path (2.5,1.5) node {$1$};
\draw (2,0) rectangle (3,1); \path (2.5,0.5) node {$4$};
\end{tikzpicture},\!
&
\begin{tikzpicture}[scale=.37,line width=1.0pt] 
\draw (0,-1) rectangle (1,0); \path (0.5,-0.5) node {$3$};
\draw (1,0) rectangle (2,1); \path (1.5,0.5) node {$1$};
\draw (2,1) rectangle (3,2); \path (2.5,1.5) node {$2$};
\draw (2,0) rectangle (3,1); \path (2.5,0.5) node {$4$};
\end{tikzpicture},\!
&
\begin{tikzpicture}[scale=.37,line width=1.0pt] 
\draw (0,-1) rectangle (1,0); \path (0.5,-0.5) node {$3$};
\draw (1,0) rectangle (2,1); \path (1.5,0.5) node {$2$};
\draw (2,1) rectangle (3,2); \path (2.5,1.5) node {$1$};
\draw (2,0) rectangle (3,1); \path (2.5,0.5) node {$4$};
\end{tikzpicture},\!
&
\begin{tikzpicture}[scale=.37,line width=1.0pt] 
\draw (0,-1) rectangle (1,0); \path (0.5,-0.5) node {$4$};
\draw (1,0) rectangle (2,1); \path (1.5,0.5) node {$1$};
\draw (2,1) rectangle (3,2); \path (2.5,1.5) node {$2$};
\draw (2,0) rectangle (3,1); \path (2.5,0.5) node {$3$};
\end{tikzpicture},\!
&
\begin{tikzpicture}[scale=.37,line width=1.0pt] 
\draw (0,-1) rectangle (1,0); \path (0.5,-0.5) node {$4$};
\draw (1,0) rectangle (2,1); \path (1.5,0.5) node {$2$};
\draw (2,1) rectangle (3,2); \path (2.5,1.5) node {$1$};
\draw (2,0) rectangle (3,1); \path (2.5,0.5) node {$3$};
\end{tikzpicture}\!\!
\end{array}
\right\}.
$$
The number of standard Young tableaux of shape $\lambda / \mu$ is denoted by $f^{\lambda/\mu} = \# \SYT(\lambda/\mu)$. The numbers $f^\lambda$ and $f^{\lambda/\mu}$ can be computed using the hook formula and skew-hook formula, respectively (see for example \cite[7.16.3, 7.21.6]{StanleyEC2}).

The \defn{column reading tableau} $\C$ of shape $\lambda/\mu$ is the standard tableau obtained by entering $1,2,\ldots,n$ consecutively down the columns of $\lambda/\mu$, beginning with the southwest most connected component and filling the columns from left to right. The \defn{row reading tableau} $\R$ of shape $\lambda/\mu$ is the standard tableau obtained by entering $1,2,\ldots,n$ left to right across the rows of $\lambda/\mu$, beginning with the northeast most connected component and filling the rows from top to bottom. In the examples above, the column reading tableau is listed first and the row reading tableau is listed last.  If $\T$ is any tableau of shape $\lambda/\mu$ with $n$ boxes and $\sigma \in \Sym_n$, then $\sigma(\T)$ is the tableau of shape $\lambda/\mu$ obtained by permuting the entries of $\T$ according to $\sigma$.

\subsection{Bruhat and weak order on the symmetric group}
\label{subsec:BruhatSn}

Bruhat order (see for example \cite{BB}) is the partial order $\le$ on $\Sym_n$ generated by the covering relation $\lessdot$ given on permutations $\sigma, \tau \in \Sym_n$ by
\begin{equation}\label{Sn:Bruhat}
\sigma \lessdot \tau 
\quad \text{if} \quad  \tau = (i,j)\sigma, \quad \text{for some $1 \le i < j \le n$},
\quad \hbox{and} \quad  
\ell(\sigma) < \ell(\tau).
\end{equation}
Thus $\sigma \le \tau$ in Bruhat order  if there exist $\sigma_0, \ldots, \sigma_k \in \Sym_n$ 
such that
$\sigma = \sigma_0\lessdot \sigma_1 \lessdot \cdots \lessdot \sigma_k = \tau.$
The \defn{subword property} (see \cite[Thm.~2.2.2]{BB}) says that $\sigma \le \tau$ if and only if a subword of some reduced word for $\tau$ is a word for $\sigma$. That is, $\sigma \le \tau$ if and only if $\tau = s_{i_1}s_{i_2} \cdots s_{i_k}$ is a reduced word and there exists a reduced word $\sigma = s_{i_{a_1}}  s_{i_{a_2}} \cdots s_{i_{a_\ell}}$ such that $1 \leq a_1 < \cdots < a_\ell \leq k$.

Weak Bruhat order (see  \cite{BB}) is the partial order $\le_W$ on $\Sym_n$ generated by the covering relation $\lessdot_W$ on permutations $\sigma, \tau \in \Sym_n$  given by
\begin{equation}\label{Sn:weakBruhat}
\sigma \lessdot_W \tau \quad \text{if} 
\quad 
\tau = s_i \sigma= (i,i+1) \sigma, \quad \text{for some $1 \le i \le n-1$},
\quad \hbox{and} \quad  
\ell(\sigma) < \ell(\tau).
\end{equation}
Thus $\sigma \le_W \tau$ if there exist $\sigma_0, \ldots, \sigma_k \in \Sym_n$ such that
$\sigma = \sigma_0 \lessdot_W \sigma_1 \lessdot_W \cdots \lessdot_W \sigma_k = \tau.$
Observe that   $\sigma \le_W \tau$ implies $\sigma \le \tau$, and thus  Bruhat order is an extension of weak order.

\subsection{Bruhat and weak order on standard tableaux}
\label{subsec:BruhatTab}

Let $\lambda/\mu$ be a skew shape with $|\lambda/\mu| = n$. Following \cite{BW} we use Bruhat and weak order on $\Sym_n$ to induce corresponding  partial orders on $\SYT(\lambda/\mu)$. 
For $\T \in \YT(\lambda/\mu)$ the \defn{word of $\T$} is the unique permutation $w_\T \in \Sym_n$ such that $w_\T(\C) = \T$.  For example, if $\lambda/\mu = (4,4,2,1)/(2,2)$, then below are the column reading tableau $\C$ of shape $\lambda/\mu$, another standard tableau $\T$ of shape $\lambda/\mu$, and the word $w_\T \in \Sym_7$ of $\T$:
$$
\C=\begin{array}{c}
\begin{tikzpicture}[scale=.37,line width=1.0pt] 
\draw (2,1) rectangle (3,2); \path (2.5,1.5) node {$4$};
\draw (2,0) rectangle (3,1); \path (2.5,0.5) node {$5$};
\draw (3,1) rectangle (4,2); \path (3.5,1.5) node {$6$};
\draw (3,0) rectangle (4,1); \path (3.5,0.5) node {$7$};
\draw (0,-1) rectangle (1,0); \path (0.5,-0.5) node {$1$};
\draw (1,-1) rectangle (2,0); \path (1.5,-0.5) node {$3$};
\draw (0,-2) rectangle (1,-1); \path (0.5,-1 .5) node {$2$};
\end{tikzpicture}
\end{array},
\qquad
\T=\begin{array}{c}
\begin{tikzpicture}[scale=.37,line width=1.0pt] 
\draw (2,1) rectangle (3,2); \path (2.5,1.5) node {$1$};
\draw (2,0) rectangle (3,1); \path (2.5,0.5) node {$5$};
\draw (3,1) rectangle (4,2); \path (3.5,1.5) node {$3$};
\draw (3,0) rectangle (4,1); \path (3.5,0.5) node {$6$};
\draw (0,-1) rectangle (1,0); \path (0.5,-0.5) node {$2$};
\draw (1,-1) rectangle (2,0); \path (1.5,-0.5) node {$4$};
\draw (0,-2) rectangle (1,-1); \path (0.5,-1 .5) node {$7$};
\end{tikzpicture}
\end{array},
\qquad 
w_\T = \begin{pmatrix}
1 & 2 & 3 & 4 & 5 & 6 & 7 \\
2 & 7 & 4 & 1 & 5 & 3 & 6
\end{pmatrix}.
$$

For $\S, \T \in  \SYT(\lambda/\mu)$, define $\S \le \T$ if and only if $w_\S \le w_\T$. It follows from \eqref{Sn:Bruhat} (see  \cite[Sec.~7]{BW}) that  Bruhat order $\le$ on $\SYT(\lambda/\mu)$ is generated by the following covering relation for $\S, \T \in \SYT(\lambda/\mu)$:
\begin{equation}\label{SYT:Bruhat}
\S \lessdot \T \quad \text{if} \quad \T = (i,j)(\S),  \quad \text{for some $1 \le i < j \le n$},\quad \text{and $i$ is in a lower row of $\S$ than $j$}.
\end{equation}
Thus $\S \le \T$ if there exist $\S_0, \ldots, \S_k  \in \SYT(\lambda/\mu)$  such that
$\S = \S_0 \lessdot \S_1  \lessdot \cdots  \lessdot \S_k = \T.$

Similarly, define $\S \le_W \T$ if and only if $w_\S \le_W w_\T$.  Then weak Bruhat order $\le_W$ on $\SYT(\lambda/\mu)$ is generated by the covering relation
\begin{equation}\label{SYT:weakBruhat}
\S  \lessdot_W \T \quad \text{if} \quad \T = s_i(\S), \quad \text{for some $1 \le i \le n-1$},\quad \text{and $i$ is in a lower row of $\S$ than $i+1$}.
\end{equation}
Thus   $\S \le_W \T$ if there exist $\S_0, \ldots, \S_k  \in \SYT(\lambda/\mu)$ such that $\S = \S_0 \lessdot_W \S_1  \lessdot_W \cdots  \lessdot_W \S_k = \T.$
Again, $\S \le_W \T$ implies $\S \le \T$, so  Bruhat order is an extension of weak  order on $\SYT(\lambda/\mu)$.  We will use  the following theorem which relates weak order on $\SYT(\lambda/\mu)$ to an interval in weak order on $\Sym_n$.

\begin{thm} \label{thm:BW}
{\rm (\cite[Thm.~7.2]{BW})} The map $\T \mapsto w_\T$ is a bijection from weak order on $\SYT(\lambda/\mu)$ to the interval $[w_\C,w_\R]$ in weak order on $\Sym_n$.
\end{thm}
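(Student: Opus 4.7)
The map $\T \mapsto w_\T$ is injective by construction, and the definition $\S \le_W \T \iff w_\S \le_W w_\T$ makes it an order isomorphism onto its image. The task is therefore to identify the image with $[w_\C, w_\R] = [e, w_\R]$.

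For the containment $w_\T \in [e, w_\R]$, I would show that every cover $\S \lessdot_W \T = s_i(\S)$ in $\SYT(\lambda/\mu)$ maps to a cover $w_\S \lessdot_W w_\T$ in $\Sym_n$. The identity $w_\T = s_i w_\S$ is immediate from $w_\T(\C) = \T = s_i w_\S(\C)$, and the length condition $\ell(w_\T) = \ell(w_\S) + 1$ reduces to $w_\S^{-1}(i) < w_\S^{-1}(i+1)$, i.e.\ the position of $i$ in $\S$ precedes that of $i+1$ in the column reading of $\C$. The crux is the structural lemma: if $i$ lies in a strictly lower row of $\S$ than $i+1$, then $i$'s column is strictly smaller. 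The same-column case is forbidden by standardness, and the case $\mathrm{col}(i) > \mathrm{col}(i+1)$ is excluded by considering the intermediate box at $(\mathrm{row}(i+1), \mathrm{col}(i))$, which lies in $\lambda/\mu$ because $\mu \subseteq \lambda$ are partitions, and whose entry would have to simultaneously exceed $i+1$ (same row, further right) and be less than $i$ (same column, above)---impossible. Since rows and columns of $\lambda/\mu$ are intervals (again by the partition property), boxes sharing a row or column lie in a single connected component, so the column reading respects the lemma globally. Combined with the fact that $\R$ is the unique maximum of $\SYT(\lambda/\mu)$ under $\le_W$ (any $\T \neq \R$ admits some $i$ with $i$ below $i+1$, yielding an upward cover $\T \lessdot_W s_i \T$), iteration gives $w_\T \le_W w_\R$ for every $\T$.

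For surjectivity onto $[e, w_\R]$, I would induct downward from $w_\R$ along a reduced decomposition of the element $u$ satisfying $w_\R = uw$, applying its letters on the left to $w_\R$ one at a time. At each stage the intermediate element equals $w_{\T'}$ for some standard $\T'$ (by the induction hypothesis), and the next factor $s_i$ is a left descent by construction. The condition $w_{\T'}^{-1}(i) > w_{\T'}^{-1}(i+1)$ places $i$'s position in $\T'$ strictly after $i+1$'s in the column reading of $\C$, and by the analysis above this rules out $i$ and $i+1$ sharing a row or column of $\T'$; hence $s_i \T'$ is a standard tableau and $s_i w_{\T'} = w_{s_i \T'}$ remains in the image.

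The main obstacle is the structural lemma on positions in skew standard tableaux, whose proof uses the partition property of $\mu$ and $\lambda$ both to locate the intermediate box inside $\lambda/\mu$ and to control the interaction between connected components and the column reading; once that is in hand, the remaining bookkeeping matches covers to covers and descents to descents routinely.
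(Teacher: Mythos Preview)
The paper does not prove this theorem: it is stated with the citation \cite[Thm.~7.2]{BW} and used as a black box. Your proposal supplies a correct, self-contained argument.

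The structural lemma you isolate---that in a standard skew tableau, if $i$ sits in a strictly lower row than $i+1$ then it must sit in a strictly earlier column---is indeed the combinatorial heart of the matter, and your intermediate-box argument at position $(\mathrm{row}(i{+}1),\mathrm{col}(i))$ is the standard way to prove it for skew shapes (the partition condition on both $\mu$ and $\lambda$ is exactly what places that box inside $\lambda/\mu$). Once that lemma is in hand, covers in $\SYT(\lambda/\mu)$ map to covers in $\Sym_n$, and the fact that $\R$ has no $i$ strictly below $i+1$ makes it the unique maximal element, giving the containment. Your downward induction for surjectivity is also sound: a left descent $s_i$ of $w_{\T'}$ forces the box of $i$ to come after the box of $i+1$ in the column reading order, which rules out $i$ and $i+1$ sharing a row or column and hence keeps $s_i(\T')$ standard. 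So every element of $[e,w_\R]$ is hit.

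One small point worth making explicit in a final write-up: your argument implicitly uses that each row and each column of $\lambda/\mu$ is a contiguous interval of boxes (so that standardness constraints along rows and columns apply without gaps, and so that the column reading order is simply ``smaller column first, then smaller row within a column'' regardless of connected components). This follows immediately from $\mu$ and $\lambda$ being partitions, and you allude to it, but it is the reason your structural lemma transfers cleanly to the disconnected case.
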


\subsection{The weak Bruhat graph}
\label{subsec:BruhatTab}

For a skew shape $\lambda/\mu$ with $n$ boxes, the \defn{weak Bruhat graph}  $\B_n^{\lambda/\mu}$ is the Hasse diagram of weak order on $\SYT(\lambda/\mu)$. That is, $\B_n^{\lambda/\mu}$ is the simple graph with vertices  $\SYT(\lambda/\mu)$ such that for $\S,\T \in \SYT(\lambda/\mu)$ there is a labeled edge $\S \overset{s_i} {\longrightarrow} \T$
 if $\T = s_i (\S)$.  Since $\T = s_i (\S)$ if and only if $\S = s_i (\T)$, we draw the graph $\B_n^{\lambda/\mu}$  with undirected edges. Since weak order on $\SYT(\lambda/\mu)$ is isomorphic to the interval $[w_\C, w_\R]$ in weak order on $\Sym_n$, the weak Bruhat graph has a unique minimal element $\C$ and a unique maximal element $\R$. Figure \ref{fig:Bruhat} displays the weak Bruhat graph $\B_5^\lambda$ for the partition  shape $\lambda = (3,2)$ and the corresponding interval in weak order on $\Sym_5$. Figure \ref{fig:skewBruhat} displays the weak Bruhat graph $\B_4^{\lambda / \mu}$ for the skew shape $\lambda / \mu = (3,3,1) / (2,1)$ and the corresponding interval in weak  order on $\Sym_4$.  Figure \ref{fig:Bruhat321} displays the weak Bruhat graph $\B_6^\lambda$ for the partition  shape $\lambda = (3,2,1).$

 \begin{figure}[htbp] 
   \centering
   \includegraphics[height=2.5in,page=1]{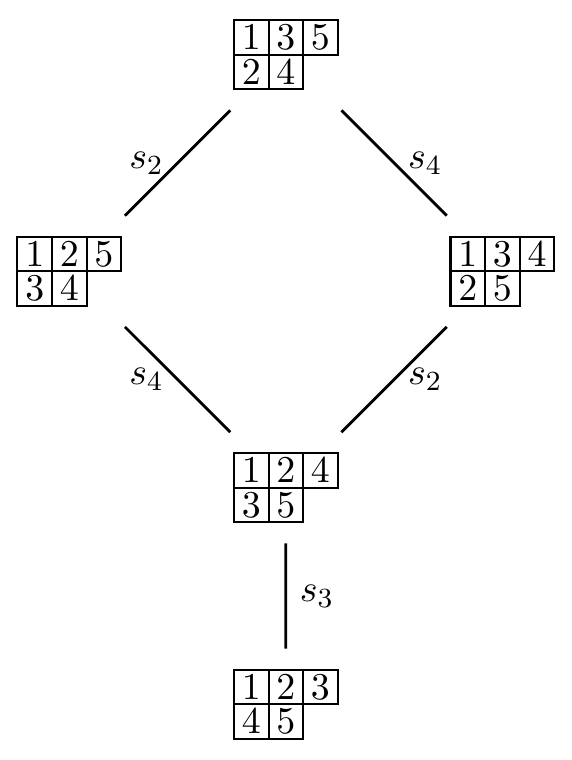}  \hskip.5in  \includegraphics[height=2.5in,page=2]{BruhatGraphs.pdf} 
   \caption{The weak Bruhat graph $\B_5^{(3,2)}$ on standard Young tableaux of shape $(3,2)$ and the corresponding interval $[1,s_3 s_2 s_4]$ in weak  order on $\Sym_5$. \label{fig:Bruhat}}

\end{figure}

 \begin{figure}[htbp] 
   \centering
   \includegraphics[height=3.0in,page=4]{BruhatGraphs.pdf}  \hskip.5in  \includegraphics[height=3.0in,page=5]{BruhatGraphs.pdf} 
   \caption{The weak Bruhat graph $\B_4^{(3,3,1)/(2,1)}$ on standard Young tableaux of skew shape $(3,3,1)/(2,1)$ and the corresponding interval $[1,s_1s_3s_2s_1]$ in weak  order on $\Sym_4$. \label{fig:skewBruhat}}
  
\end{figure}

 \begin{figure}[htbp] 
   \centering
   \includegraphics[height=4.5in,page=6]{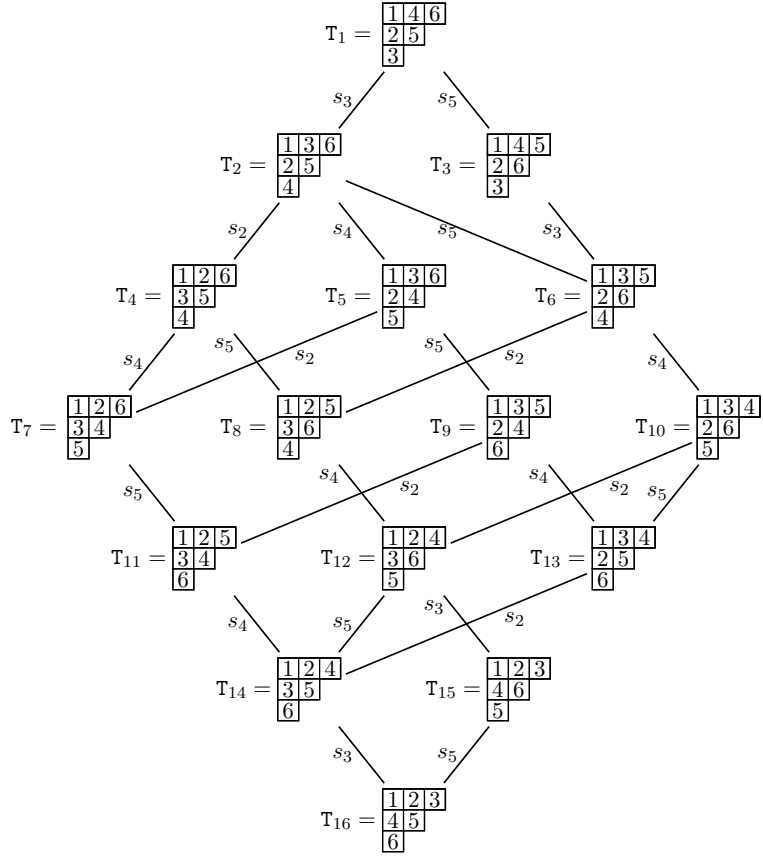}
   \caption{The weak Bruhat Graph $\B_5^{(3,2, 1)}$ on standard Young tableaux of shape $(3,2,1)$. \label{fig:Bruhat321}}

\end{figure}

The \defn{depth} of the tableau $\T \in \B_n^{\lambda/\mu}$ is the length of a shortest path from the root $\C$ to $\T$. By Theorem \ref{thm:BW}, the depth of $\T$ is equal to the length $\ell(w_\T)$ of the word of $\T$.  An \defn{inversion} in $\T$ is a pair $(i,j)$ such that $i > j$ and $i$ is strictly south and strictly west of $j$ in $\T$. An inversion in a permutation $w \in \Sym_n$ is a pair $(i,j)$ such that $i > j$ and $i$ appears left of $j$ in the one-line notation for $w$. By definition of $w_\T$,   $(i,j)$ is an inversion in $\T$ if and only if $(i,j)$ is an inversion in $w_\T$.  Thus, the depth of $\T$ equals the number of inversions in $\T$. For example, 
$$
\text{ if }
\T=\begin{array}{l}
\begin{tikzpicture}[scale=.37,line width=1.0pt] 
\draw (0,0) rectangle (1,1); \path (0.5,0.5) node {$1$};
\draw (1,0) rectangle (2,1); \path (1.5,0.5) node {$2$};
\draw (2,0) rectangle (3,1); \path (2.5,0.5) node {$4$};
\draw (0,-1) rectangle (1,0); \path (0.5,-0.5) node {$3$};
\draw (1,-1) rectangle (2,0); \path (1.5,-0.5) node {$6$};
\draw (0,-2) rectangle (1,-1); \path (0.5,-1.5) node {$5$};
\end{tikzpicture}\end{array} \\
\qquad\hbox{then} \qquad w_\T =
\begin{pmatrix} 1 & 2 & 3 & 4 & 5 & 6 \\  1& 3 & 5 & 2 & 6 & 4 \end{pmatrix} =  135264.
$$
We see that  $\T$ and $w_\T$  have 4 inversions: $(3,2), (5,2),$ $(5,4),$ and $(6,4)$. Thus $\T$ has depth 4 (as can be seen in Figure \ref{fig:Bruhat321}) and $w_\T$ has length 4; indeed, $w_\T = s_4 s_5 s_2 s_3$ is a reduced expression for $w_\T$. 

\begin{lemma} \label{lemma:inversionswapping}
If $\T \in \SYT(\lambda/\mu)$ and $s_i(\T) \lessdot \T$, then $\inv(\T) = s_i(\inv(s_i(\T))) \cup \{ (i+1, i)\}$, where $\inv(\T)$ is the set of inversions in $\T$.  
\end{lemma}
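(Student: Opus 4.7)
The plan is to produce a bijection between $\inv(s_i(\T))$ and $\inv(\T) \setminus \{(i+1, i)\}$ induced by the natural action of $s_i$ on pairs of entries, together with a direct verification that $(i+1, i)$ is an inversion of $\T$ that is not in the image of this action.

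The first step is to extract from the hypothesis $s_i(\T) \lessdot \T$ the exact spatial configuration of $i$ and $i+1$. Because $s_i$ is a simple transposition, this Bruhat covering is actually a weak covering, so by \eqref{SYT:weakBruhat} the entry $i$ lies in a strictly lower row of $s_i(\T)$ than $i+1$; hence in $\T$ the entry $i$ sits at some box $(r_1, c_1)$ and $i+1$ at $(r_2, c_2)$ with $r_2 > r_1$. To conclude $(i+1, i) \in \inv(\T)$ I still need $c_2 < c_1$. The case $c_2 = c_1$ is impossible, since then column $c_1$ of $s_i(\T)$ would contain $i+1$ directly above $i$, contradicting standardness of $s_i(\T) \in \SYT(\lambda/\mu)$. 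The case $c_2 > c_1$ is ruled out by examining the box $(r_1, c_2)$: the partition condition $\lambda_{r_2} \le \lambda_{r_1}$ forces this box to lie in $\lambda$; it cannot lie in $\mu$, since $c_2 \le \mu_{r_1}$ combined with $c_1 < c_2$ would contradict $(r_1, c_1) \in \lambda/\mu$; and if it lies in $\lambda/\mu$ with entry $a$, then row-standardness of $\T$ forces $a > i$ while column-standardness forces $a < i+1$, which is impossible.

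With $(i+1, i) \in \inv(\T)$ in hand, the bijection is straightforward. For any two distinct entries $a, b$ with $\{a, b\} \ne \{i, i+1\}$, the boxes occupied by $a$ and $b$ in $\T$ are precisely the boxes occupied by $s_i(a)$ and $s_i(b)$ in $s_i(\T)$, so the ``strictly south and strictly west'' condition defining an inversion is preserved when one simultaneously applies $s_i$ to the tableau and to the pair. This yields a bijection between $\inv(s_i(\T))$ and $\inv(\T) \setminus \{(i+1, i)\}$, because $(i+1, i) \notin \inv(s_i(\T))$ (in $s_i(\T)$ the entry $i+1$ sits at $(r_1, c_1)$, strictly \emph{north} of $i$ at $(r_2, c_2)$), and the only possible $s_i$-preimage of $(i+1, i)$, namely $(i, i+1)$, is never an inversion by definition. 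The equality of the statement then follows as a disjoint union $\inv(\T) = s_i(\inv(s_i(\T))) \sqcup \{(i+1, i)\}$.

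The main obstacle I anticipate is the case $c_2 > c_1$ in the second paragraph, which is the one place where the skew structure interacts nontrivially with standardness; once that is handled, the remainder is routine bookkeeping on the invariance of the south-and-west relation under the swap $s_i$.
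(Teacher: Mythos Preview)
Your proof is correct and follows essentially the same approach as the paper's: both argue that $s_i$ gives a bijection between $\inv(s_i(\T))$ and $\inv(\T)\setminus\{(i+1,i)\}$ by noting that positions of paired entries are unchanged under the swap. Your proof is more detailed in one respect: you explicitly verify the column inequality $c_2 < c_1$ needed to conclude $(i+1,i)\in\inv(\T)$, whereas the paper simply asserts this inversion without justifying the ``strictly west'' part.
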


\begin{proof} It is helpful to compare with Example \ref{ex:inversionswap} below.  Since only $i$ and $i+1$ are exchanged, inversions in $\T$ and $s_i(\T)$ that do not involve $i$ or $i+1$ are exactly the same.  Inversions that involve $i$ or $i+1$ and another entry $j$ are in bijection by swapping $i$ and $i+1$ (using the fact that $j$ cannot be between $i$ and $i+1$). The only new inversion is  $(i+1,i) \in \inv(\T)$.
\end{proof}

\Yboxdim{12pt}
\def\ten{{\bf 10}}
\def\eleven{{\bf 11}}
\def\twelve{12}
\def\thirteen{13}

\begin{example}\label{ex:inversionswap} In the example below $s_{10}(\T) = \T'$ and $\inv(\T) = s_{10}(\inv(\T')) \cup \{11,10\}$. That is, the inversions of $\T$ are obtained by exchanging 10 and 11 in the inversions of $\T'$ and then adding the inversion $(11,10)$. Note that under this pairing, the corresponding inversions are in the same positions in $\T$ as in $\T'$.
$$
\begin{array}{ccc}
\T = \young(14679,258\ten,3\eleven\thirteen,\twelve) &\overset{s_{10}}{\longrightarrow}& \T' =  \young(14679,258\eleven,3\ten\thirteen,\twelve) \\
\begin{array}{l}
\{(8,7), (10,9), (11,6), (11,7), (11,8), (11,9), \\
(11,10), (12,4),(12,5), (12,6),(12,7), (12,8), \\
(12,9),(12,10),(12,11), (13,7), (13,9), (13,10)\}
\end{array}
&\overset{s_{10}}{\longrightarrow}& \begin{array}{l}
\{(8,7), (11,9), (10,6), (10,7), (10,8), (10,9), \\
\cancel{(10,11)}, (12,4),(12,5), (12,6),(12,7), (12,8), \\
(12,9),(12,11),(12,10), (13,7), (13,9), (13,11)\}
\end{array}
\end{array}
$$
\end{example}


\section{Representations of the Symmetric Group}
\label{sec:Irreps}

For a skew shape  $\lambda/\mu$ with  $n$ boxes, we follow \cite{Ram-skew} and define $\CC \Sym_n$-modules $\Sym_n^{\lambda/\mu}$ such that, when  $\mu = \emptyset$ and $\lambda = \lambda/\emptyset$ is a partition of $n$, the module $\Sym_n^\lambda$ is irreducible.
In this section, we describe the natural, seminormal, and orthogonal representations of $\Sym_n^{\lambda/\mu}$, originally due to Young \cite{Yng2} for partition shapes. For skew shapes, the natural representations of $\Sym_n^{\lambda/\mu}$ are described in  \cite{JamesPeel,GarsiaWachsSkew} and the seminormal representations in \cite{Ram-skew}.

\subsection{Content and axial distance}
Let $\lambda/\mu$ be a skew shape with  $|\lambda/\mu|=n$. If $b$ is a box in $\lambda/\mu$, we define the \defn{content} of the box $b$ as
\begin{equation}\label{def:Sncontent}
ct(b) = y - x,  \qquad \hbox{if $b$ is in position $(x,y)$ of the partition $\lambda$ (i.e., row $x$ and column $y$)}.
\end{equation}
Note that for $b \in \lambda/\mu$, we use its position in the outer partition $\lambda$.  For example, the following skew diagram has shape $\lambda/\mu = (9,7,7,4,2,2,1)/(4,3,2,2,2)$, and we have filled each box with its content:
$$
\begin{tikzpicture}[scale=.37,line width=1.0pt] 
\draw [dotted,gray]  (0,-3) -- (0,2)--(4,2);
\draw [dotted,gray]  (1,-3) -- (1,2);\draw [dotted,gray]  (2,-3) -- (2,-1);
\draw [dotted,gray]  (2,-2) -- (2,-1);\draw [dotted,gray]  (2,0) -- (2,2);\draw [dotted,gray]  (3,1) -- (3,2);
\draw [dotted,gray] (0,1) -- (3,1);\draw [dotted,gray] (0,0) -- (2,0);\draw [dotted,gray] (0,-1) -- (2,-1);\draw [dotted,gray] (0,-2) -- (2,-2);
\draw (4,1) rectangle (5,2); \path (4.5,1.5) node {$4$};
\draw (5,1) rectangle (6,2); \path (5.5,1.5) node {$5$};
\draw (6,1) rectangle (7,2); \path (6.5,1.5) node {$6$};
\draw (7,1) rectangle (8,2); \path (7.5,1.5) node {$7$};
\draw (8,1) rectangle (9,2); \path (8.5,1.5) node {$8$};
\draw (2,-1) rectangle (3,0); \path (2.5,-0.5) node {$0$};
\draw (3,-1) rectangle (4,0); \path (3.5,-0.5) node {$1$};
\draw (4,-1) rectangle (5,0); \path (4.5,-0.5) node {$2$};
\draw (5,-1) rectangle (6,0); \path (5.5,-0.5) node {$3$};
\draw (6,-1) rectangle (7,0); \path (6.5,-0.5) node {$4$};
\draw (2,-2) rectangle (3,-1); \path (2.5,-1.5) node {-$1$};
\draw (3,-2) rectangle (4,-1); \path (3.5,-1.5) node {$0$};
\draw (3,0) rectangle (4,1); \path (3.5,0.5) node {$2$};
\draw (4,0) rectangle (5,1); \path (4.5,0.5) node {$3$};
\draw (5,0) rectangle (6,1); \path (5.5,0.5) node {$4$};
\draw (6,0) rectangle (7,1); \path (6.5,0.5) node {$5$};
\draw (0,-4) rectangle (1,-3);\path (0.5,-3.5) node {-$5$};
\draw (1,-4) rectangle (2,-3);\path (1.5,-3.5) node {-$4$};
\draw (0,-5) rectangle (1,-4);\path (0.5,-4.5) node {-$6$};
\end{tikzpicture}.
$$
For $\T \in \SYT(\lambda/\mu)$, let $\T(i)$ be the box of $\T$ containing $i$ and define
\begin{equation}\label{def:symmetric-diagonal}
a_{i,j}(\T) = \frac{1}{ ct(\T(j)) - ct(\T(i))} \qquad \hbox{and}\qquad a_{i}(\T) = a_{i,i+1}(\T) = \frac{1}{ ct(\T(i+1)) - ct(\T(i))}.
\end{equation}
Observe that $ct(\T(j)) - ct(\T(i))$ equals the number of steps in a walk from from $i$ to $j$ in the tableau $\T$ counted positively in the northeast direction and negatively in the southwest direction, known as the \defn{axial distance} from $i$ to $j$ in $\T$. For example, the axial distance from 10 to 11 in the tableaux $\T$ of Example \ref{ex:inversionswap} is $-3$ and $a_{10}(\T) =  \frac{1}{-1-2} = -\frac{1}{3}$.

\subsection{Seminormal basis} 
Let  $\mathcal{V}^{\lambda/\mu} = \left\{ \v_\T \mid \T \in \SYT(\lambda/\mu)\right\}$ be a set of vectors indexed by the standard Young tableaux of shape $\lambda/\mu$, and define $\Sym_n^{\lambda/\mu} = \CC\text{-span}\left\{ \v_\T \mid \T \in \SYT(\lambda/\mu)\right\}$, a $\CC$-vector space with basis $\mathcal{V}^{\lambda/\mu}$.  For $1 \le i \le n-1$, define the action of the simple transposition $s_i$ on this basis by
\begin{equation}\label{eq:seminormal}
s_i \v_\T = a_i(\T)  \v_\T+ \left(1 +a_i(\T)  \right) \v_{s_i(\T)}, \qquad \text{where $ \v_{s_i(\T)} = 0$ if $s_i(\T)$ is nonstandard.}
\end{equation}
This action is extended linearly to all of $\Sym_n^{\lambda/\mu}$, and one can check by direct calculation that it respects the relations \eqref{S_n-relations} and thus affords a representation $\rho_\mathcal{V}$, called the seminormal representation of $\CC\Sym_n$, whose matrix entries are  rational numbers. See Example \ref{eg:seminormalFromBruhat} below for an example of computing the seminormal matrix entries using the corresponding weak Bruhat graph.

The seminormal representation gives explicit matrix entries for $\rho_\mathcal{V}(s_i)$ on th e generating set of simple transpositions $s_i, 1 \le i \le n-1$. To produce the matrix $\rho_\mathcal{V}(\sigma)$ of a general permutation $\sigma \in \Sym_n$, one must decompose $\sigma$ into a product of simple transpositions and multiply the corresponding matrices. Upon restriction from $\Sym_n$ to the subgroup $\Sym_{n-1}$ the matrices $\rho_\mathcal{V}(\sigma)$ block-diagonalize into irreducible $\Sym_{n-1}$ seminormal matrices. This is a key ingredient in the algorithm for computing the fast Fourier transform on $\Sym_n$ (see \cite{CB,DR}), and it makes the seminormal representation well-adapted to the the Okounkov-Vershik \cite{OV} and  \cite{RamSeminormal} approaches to symmetric group representations via the tower $\Sym_0 \subseteq \Sym_ 1 \subseteq \cdots \subseteq \Sym_n$.  Furthermore, the matrices $\rho_\mathcal{V}(s_i)$ have at most one off-diagonal entry in each row and column, and sparse matrix techniques can be used to get computational speedups.

\subsection{Orthogonal basis}
Young's orthogonal representation closely resembles the seminormal representation, but the off-diagonal entries are chosen to make the matrices orthogonal. In Section \ref{sec:orthogonal} we show that the seminormal and orthogonal representations differ by a diagonal change-of-basis matrix, and we compute these diagonal entries in Theorem \ref{thm:orthogonal-transition}.  Let $\mathcal{O}^{\lambda/\mu} = \left\{ \u_\T \mid \T \in \SYT(\lambda/\mu)\right\}$ and define the action of a simple transposition on this basis by
\begin{equation}\label{eq:orthogonal}
s_i \u_\T = a_i(\T) \u_\T + \sqrt{1 -a_i(\T)^2\,} \u_{s_i(\T)}, \qquad \text{where $\u_{s_i(\T)} = 0$ if $s_i(\T)$ is nonstandard.}
\end{equation}
Again, the action extends to a representation $\rho_\mathcal{O}$ of $\CC\Sym_n$. In this case, the matrix entries are in a finite field extension of the rationals (to include the square-roots in \eqref{eq:orthogonal}). The matrices of the generators are symmetric and, in general, $\rho_\mathcal{O}(\sigma)^{-1} = \rho_\mathcal{O}(\sigma)^{\textsf{T}}$ for $\sigma \in \Sym_n$ (where $\scriptstyle{\textsf{T}}$ denotes matrix transpose). The orthogonal representation shares the same block-diagonal property under the restriction $\Sym_{n-1} \subseteq \Sym_n$ and the same sparseness properties as the seminormal representation, making it equally well-adapted to computing the fast Fourier transform.

\subsection{Natural basis}

We follow Ram \cite{Ram-skew} and derive Young's natural basis from the seminormal basis. 
Let $\v_\C$ be the seminormal basis vector indexed by the column reading tableau $\C \in \SYT(\lambda/\mu)$, and for each tableau $\T \in \YT(\lambda/\mu)$ (\emph{not necessarily standard}) define the following vector in $ \Sym_n^{\lambda/\mu}$:
\begin{equation}\label{natural-rep}
\n_\T := w_\T \v_\C, \qquad\text{where $w_\T \in \Sym_n$ such that $w_\T(\C) =\T$ (i.e., $w_\T$ is the word of $\T$)}.
\end{equation}
It follows that for any $\sigma \in \Sym_n$ and $\T \in \YT(\lambda/\mu)$, we have $\sigma \n_\T = \sigma w_\T \v_\C = w_{\sigma(\T)} \v_\C = \n_{\sigma(\T)}.$
We then define Young's natural basis as the subset of these vectors $\mathcal{N}^{\lambda/\mu} = \left\{ \n_\T \mid \T \in \SYT(\lambda/\mu)\right\}$ indexed by standard tableaux. Then, we have
\begin{equation}\label{eq:YoungNatural}
 \n_\C = \v_\C, \qquad\hbox{and}\qquad
\sigma \n_\T  =  \n_{\sigma(\T)}, \qquad \text{for all $\sigma \in \Sym_n$ and all $\T \in \SYT(\lambda/\mu)$}.
\end{equation}
When $\sigma(\T)$ is nonstandard, the vector $\n_{\sigma(\T)}$ can be re-expressed as an \emph{integer} linear combination of basis vectors corresponding to standard tableaux using a straightening algorithm such as Garnir relations (see, for example,  \cite{Garnir,JK,Sa,Ram-skew}), turbo straightening \cite{turbo}, or tableaux intersection \cite{GM}.  Indeed, Ram \cite{Ram-skew}  proves that if $\n_\T$ is defined according to  \eqref{eq:YoungNatural} up to scalar multiple, then these vectors satisfy the classical Garnir relations of \cite{Garnir,JK,Sa}. The straightening algorithms can be done over the integers, so that the corresponding natural representation $\rho_\mathcal{N}(\sigma)$ is an integral representation. 

The natural basis is not well-adapted to the fast Fourier transform, but as discussed in \cite{DR}, applications may demand using the natural form. In this case, one can first compute the fast Fourier transform in the seminormal basis and then convert the final result to the natural basis by conjugating by the transition matrix $\mathcal{A}_{\lambda/\mu}$ determined in the next section.

\begin{example}\label{eg:seminormalFromBruhat} The weak Bruhat graph can be used to read off the entries in the seminormal representation. For example, consider $\Sym_6^{(3,2,1)}$ whose Bruhat graph $\B_{5}^{(3,2,1)}$ is given in Figure \ref{fig:Bruhat321}. The axial distance between 5 and 6 in $\T_{11}$ is $-4$, and $s_5(\T_{11}) = \T_{7}$, so using the seminormal basis,
$$
s_5 \v_{\T_{11}} = -\tfrac{1}{4} \v_{\T_{11}} + (1 - \tfrac{1}{4}) \v_{\T_{7}},
$$
and $\T_{11}  \overset{s_5}{\longrightarrow} \T_7$ is an edge in $\B_{5}^{(3,2,1)}$. 
The axial distance between 3 and 4 in $\T_{11}$ is $1$, but $s_3(\T_{11})$ is nonstandard, so
$$
s_3 \v_{\T_{11}} =  \v_{\T_{11}},
$$
and there is no edge labeled by $s_3$ incident to $\T_{11}$ in $\B_{5}^{(3,2,1)}$. 
In the natural basis, we have $s_5 \n_{\T_{11}} = \n_{s_5(\T_{11})}=  \n_{\T_7}$, but $s_3(\T_{11})$ is nonstandard so $s_3 \n_{\T_{11}} = \n_{s_3(\T_{11})}$ must be re-expressed in the basis of standard tableaux using Garnir relations  (e.g., see \cite{JK,Sa,Ram-skew}) which, in this case, after several recursive computations, turns out to be: $\n_{s_3(\T_{11})} = \n_{\T_{11}} - \n_{\T_9}- \n_{\T_8} +  \n_{\T_6} - \n_{\T_3}$. 
\end{example}

\section{Transition Matrices for the Symmetric Group}
\label{sec:Transition}

For a skew shape $\lambda/\mu$ with $n$ boxes,  let $\mathcal{N}^{\lambda/\mu} = \left\{ \n_\T \mid \T \in \SYT(\lambda/\mu)\right\}$ and $\mathcal{V}^{\lambda/\mu} = \left\{ \v_\T \mid \T \in \SYT(\lambda/\mu)\right\}$ be the natural and seminormal bases of $\Sym_n^{\lambda/\mu}$ defined in Section \ref{sec:Irreps}. The goal of this section is to compute $A_{\S,\T} \in \CC$ for $\S,\T \in \SYT(\lambda/\mu)$, uniquely defined by the following change-of-basis equation:
\begin{equation}\label{TransitionCoefficients}
\n_\T = \sum_{\S \in \SYT(\lambda/\mu)} A_{\S,\T} \, \v_\S.
\end{equation}

As in Section \ref{subsec:BruhatTab}, for  $\S, \T \in \SYT(\lambda/\mu)$ write $\S \overset{s_i}{\longrightarrow} \T$ if $\T= s_i(\S)$ for a simple transposition $s_i$.  A \defn{path of length $k$} in the weak Bruhat graph $\B_n^{\lambda/\mu}$ is a  sequence of the form
\begin{equation}\label{def:path}
\pi = (\T_0 \overset{s_{i_1}}{\longrightarrow} \T_1  \overset{s_{i_2}}{\longrightarrow}  \T_2 \overset{s_{i_3}}{\longrightarrow} \cdots \overset{s_{i_k}}{\longrightarrow} \T_k)
\end{equation}
such that $\T_j \in \SYT(\lambda/\mu)$ for  $0\le j \le k$.  The corresponding word $w=s_{i_k} s_{i_{k-1}} \cdots s_{i_1} \in \Sym_n$ satisfies $w(\T_0) = \T_k$. The path $\pi$ is \defn{minimal}  if $s_{i_k} s_{i_{k-1}} \cdots s_{i_1}$ is a reduced word for the permutation $w \in \Sym_n$ that sends $\T_0$ to $\T_k$.  Since $\B_n^{\lambda/\mu}$ corresponds to an interval in weak order on $\Sym_n$, a path $\pi$ is minimal if and only if it is a shortest path from $\T_0$ to $\T_k$  in $\B_n^{\lambda/\mu}$.  

Additionally, we write $\T \overset{e}{\longrightarrow} \T$ for the identity element $e \in \Sym_n$ and any $\T\in \SYT(\lambda/\mu)$.  A \defn{subpath} of the path $\pi$ in \eqref{def:path} is a path of the form 
\begin{equation}\label{def:subpath}
\omega = ( \T_0 = \S_0  \overset{z_{1}}{\longrightarrow} \S_1  \overset{z_{2}}{\longrightarrow}  \S_2 \overset{z_{3}}{\longrightarrow} \cdots \overset{z_{k}}{\longrightarrow} \S_k)
\end{equation}
where $\S_j \in \SYT(\lambda/\mu)$ for  $0\le j \le k$, and  $z_{j} \in \{ s_{i_j}, e \}$.  Thus, a subpath is a path in $\B_n^{\lambda/\mu}$ where one waits at node $\S_{j-1}$ when $z_j = e$ and moves from $\S_{j-1}$ to $\S_j$ when $z_j = s_{i_j}$ . Let $\omega \subseteq \pi$  denote that $\omega$ is a subpath of $\pi$, and  say that the subpath $\omega$ \defn{terminates} at $\S_k$.  Since the $z_j$'s can be identity elements, the subpath $\omega$ need not terminate at the same tableau as the path $\pi$. Examples \ref{ex:path1} and  \ref{ex:path2}  give examples of paths and subpaths. If $\pi$ is a path with word $w=s_{i_k} s_{i_{k-1}} \cdots s_{i_1}$ and $\omega \subseteq \pi$ is a subpath with word $w'=z_k z_{k-1} \cdots z_1$, with $z_j  \in \{s_{i_j}, e\}$ for each $j$, then $w'$ is a subword of $w$. However, not every subword of $w$ corresponds to a subpath, since it is possible that not every tableau in the corresponding path is standard, as seen in the case of $\nu$ in Example \ref{ex:path1}.

\begin{example}\label{ex:path1} Below is a minimal path $\pi$ in $\B_6^{(3,2,1)}$ (see Figure \ref{fig:Bruhat321}) and two subpaths $\omega_1,\omega_2 \subseteq \pi$. 
$$
\begin{array}{r rc cc cc cc cc cc |}
\pi = & \bigg(
\begin{array}{c}\begin{tikzpicture}[scale=.30,line width=1.0pt] 
\draw (0,0) rectangle (1,1); \path (0.5,0.5) node {$\scriptstyle{1}$};
\draw (0,-1) rectangle (1,0); \path (0.5,-0.5) node {$\scriptstyle{2}$};
\draw (0,-2) rectangle (1,-1); \path (0.5,-1.5) node {$\scriptstyle{3}$};
\draw (1,0) rectangle (2,1); \path (1.5,0.5) node {$\scriptstyle{4}$};
\draw (1,-1) rectangle (2,0); \path (1.5,-0.5) node {$\scriptstyle{5}$};
\draw (2,0) rectangle (3,1); \path (2.5,0.5) node {$\scriptstyle{6}$};
\end{tikzpicture}\end{array}
&\!\!\overset{s_{3}}{\longrightarrow}\!\!&
 \begin{array}{c}\begin{tikzpicture}[scale=.30,line width=1.0pt] 
\draw (0,0) rectangle (1,1); \path (0.5,0.5) node {$\scriptstyle{1}$};
\draw (0,-1) rectangle (1,0); \path (0.5,-0.5) node {$\scriptstyle{2}$};
\draw (0,-2) rectangle (1,-1); \path (0.5,-1.5) node {$\scriptstyle{4}$};
\draw (1,0) rectangle (2,1); \path (1.5,0.5) node {$\scriptstyle{3}$};
\draw (1,-1) rectangle (2,0); \path (1.5,-0.5) node {$\scriptstyle{5}$};
\draw (2,0) rectangle (3,1); \path (2.5,0.5) node {$\scriptstyle{6}$};
\end{tikzpicture}\end{array} & 
\!\!\overset{s_{2}}{\longrightarrow}\!\! & 
 \begin{array}{c}\begin{tikzpicture}[scale=.30,line width=1.0pt] 
\draw (0,0) rectangle (1,1); \path (0.5,0.5) node {$\scriptstyle{1}$};
\draw (0,-1) rectangle (1,0); \path (0.5,-0.5) node {$\scriptstyle{3}$};
\draw (0,-2) rectangle (1,-1); \path (0.5,-1.5) node {$\scriptstyle{4}$};
\draw (1,0) rectangle (2,1); \path (1.5,0.5) node {$\scriptstyle{2}$};
\draw (1,-1) rectangle (2,0); \path (1.5,-0.5) node {$\scriptstyle{5}$};
\draw (2,0) rectangle (3,1); \path (2.5,0.5) node {$\scriptstyle{6}$};
\end{tikzpicture}\end{array} & 
\!\!\overset{s_{5}}{\longrightarrow}\!\! & 
 \begin{array}{c}\begin{tikzpicture}[scale=.30,line width=1.0pt] 
\draw (0,0) rectangle (1,1); \path (0.5,0.5) node {$\scriptstyle{1}$};
\draw (0,-1) rectangle (1,0); \path (0.5,-0.5) node {$\scriptstyle{3}$};
\draw (0,-2) rectangle (1,-1); \path (0.5,-1.5) node {$\scriptstyle{4}$};
\draw (1,0) rectangle (2,1); \path (1.5,0.5) node {$\scriptstyle{2}$};
\draw (1,-1) rectangle (2,0); \path (1.5,-0.5) node {$\scriptstyle{6}$};
\draw (2,0) rectangle (3,1); \path (2.5,0.5) node {$\scriptstyle{5}$};
\end{tikzpicture}\end{array} & 
\!\!\overset{s_{4}}{\longrightarrow}\!\!&
 \begin{array}{c}\begin{tikzpicture}[scale=.30,line width=1.0pt] 
\draw (0,0) rectangle (1,1); \path (0.5,0.5) node {$\scriptstyle{1}$};
\draw (0,-1) rectangle (1,0); \path (0.5,-0.5) node {$\scriptstyle{3}$};
\draw (0,-2) rectangle (1,-1); \path (0.5,-1.5) node {$\scriptstyle{5}$};
\draw (1,0) rectangle (2,1); \path (1.5,0.5) node {$\scriptstyle{2}$};
\draw (1,-1) rectangle (2,0); \path (1.5,-0.5) node {$\scriptstyle{6}$};
\draw (2,0) rectangle (3,1); \path (2.5,0.5) node {$\scriptstyle{4}$};
\end{tikzpicture}\end{array} & 
\!\!\overset{s_{3}}{\longrightarrow}\!\!&
 \begin{array}{c}\begin{tikzpicture}[scale=.30,line width=1.0pt] 
\draw (0,0) rectangle (1,1); \path (0.5,0.5) node {$\scriptstyle{1}$};
\draw (0,-1) rectangle (1,0); \path (0.5,-0.5) node {$\scriptstyle{4}$};
\draw (0,-2) rectangle (1,-1); \path (0.5,-1.5) node {$\scriptstyle{5}$};
\draw (1,0) rectangle (2,1); \path (1.5,0.5) node {$\scriptstyle{2}$};
\draw (1,-1) rectangle (2,0); \path (1.5,-0.5) node {$\scriptstyle{6}$};
\draw (2,0) rectangle (3,1); \path (2.5,0.5) node {$\scriptstyle{3}$};
\end{tikzpicture}\end{array}\bigg)
\\
\omega_1 = & \bigg(
\begin{array}{c}\begin{tikzpicture}[scale=.30,line width=1.0pt] 
\draw (0,0) rectangle (1,1); \path (0.5,0.5) node {$\scriptstyle{1}$};
\draw (0,-1) rectangle (1,0); \path (0.5,-0.5) node {$\scriptstyle{2}$};
\draw (0,-2) rectangle (1,-1); \path (0.5,-1.5) node {$\scriptstyle{3}$};
\draw (1,0) rectangle (2,1); \path (1.5,0.5) node {$\scriptstyle{4}$};
\draw (1,-1) rectangle (2,0); \path (1.5,-0.5) node {$\scriptstyle{5}$};
\draw (2,0) rectangle (3,1); \path (2.5,0.5) node {$\scriptstyle{6}$};
\end{tikzpicture}\end{array}
&\!\!\overset{s_{3}}{\longrightarrow}\!\!&
 \begin{array}{c}\begin{tikzpicture}[scale=.30,line width=1.0pt] 
\draw (0,0) rectangle (1,1); \path (0.5,0.5) node {$\scriptstyle{1}$};
\draw (0,-1) rectangle (1,0); \path (0.5,-0.5) node {$\scriptstyle{2}$};
\draw (0,-2) rectangle (1,-1); \path (0.5,-1.5) node {$\scriptstyle{4}$};
\draw (1,0) rectangle (2,1); \path (1.5,0.5) node {$\scriptstyle{3}$};
\draw (1,-1) rectangle (2,0); \path (1.5,-0.5) node {$\scriptstyle{5}$};
\draw (2,0) rectangle (3,1); \path (2.5,0.5) node {$\scriptstyle{6}$};
\end{tikzpicture}\end{array} & 
\!\!\overset{e}{\longrightarrow}\!\! & 
 \begin{array}{c}\begin{tikzpicture}[scale=.30,line width=1.0pt] 
\draw (0,0) rectangle (1,1); \path (0.5,0.5) node {$\scriptstyle{1}$};
\draw (0,-1) rectangle (1,0); \path (0.5,-0.5) node {$\scriptstyle{2}$};
\draw (0,-2) rectangle (1,-1); \path (0.5,-1.5) node {$\scriptstyle{4}$};
\draw (1,0) rectangle (2,1); \path (1.5,0.5) node {$\scriptstyle{3}$};
\draw (1,-1) rectangle (2,0); \path (1.5,-0.5) node {$\scriptstyle{5}$};
\draw (2,0) rectangle (3,1); \path (2.5,0.5) node {$\scriptstyle{6}$};
\end{tikzpicture}\end{array} & 
\!\!\overset{s_{5}}{\longrightarrow}\!\! & 
 \begin{array}{c}\begin{tikzpicture}[scale=.30,line width=1.0pt] 
\draw (0,0) rectangle (1,1); \path (0.5,0.5) node {$\scriptstyle{1}$};
\draw (0,-1) rectangle (1,0); \path (0.5,-0.5) node {$\scriptstyle{2}$};
\draw (0,-2) rectangle (1,-1); \path (0.5,-1.5) node {$\scriptstyle{4}$};
\draw (1,0) rectangle (2,1); \path (1.5,0.5) node {$\scriptstyle{3}$};
\draw (1,-1) rectangle (2,0); \path (1.5,-0.5) node {$\scriptstyle{6}$};
\draw (2,0) rectangle (3,1); \path (2.5,0.5) node {$\scriptstyle{5}$};
\end{tikzpicture}\end{array} & 
\!\!\overset{s_{4}}{\longrightarrow}\!\!&
 \begin{array}{c}\begin{tikzpicture}[scale=.30,line width=1.0pt] 
\draw (0,0) rectangle (1,1); \path (0.5,0.5) node {$\scriptstyle{1}$};
\draw (0,-1) rectangle (1,0); \path (0.5,-0.5) node {$\scriptstyle{2}$};
\draw (0,-2) rectangle (1,-1); \path (0.5,-1.5) node {$\scriptstyle{5}$};
\draw (1,0) rectangle (2,1); \path (1.5,0.5) node {$\scriptstyle{3}$};
\draw (1,-1) rectangle (2,0); \path (1.5,-0.5) node {$\scriptstyle{6}$};
\draw (2,0) rectangle (3,1); \path (2.5,0.5) node {$\scriptstyle{4}$};
\end{tikzpicture}\end{array} & 
\!\!\overset{e}{\longrightarrow}\!\! &
 \begin{array}{c}\begin{tikzpicture}[scale=.30,line width=1.0pt] 
\draw (0,0) rectangle (1,1); \path (0.5,0.5) node {$\scriptstyle{1}$};
\draw (0,-1) rectangle (1,0); \path (0.5,-0.5) node {$\scriptstyle{2}$};
\draw (0,-2) rectangle (1,-1); \path (0.5,-1.5) node {$\scriptstyle{5}$};
\draw (1,0) rectangle (2,1); \path (1.5,0.5) node {$\scriptstyle{3}$};
\draw (1,-1) rectangle (2,0); \path (1.5,-0.5) node {$\scriptstyle{6}$};
\draw (2,0) rectangle (3,1); \path (2.5,0.5) node {$\scriptstyle{4}$};
\end{tikzpicture}\end{array}\bigg)
\\
\omega_2 = & \bigg(
\begin{array}{c}\begin{tikzpicture}[scale=.30,line width=1.0pt] 
\draw (0,0) rectangle (1,1); \path (0.5,0.5) node {$\scriptstyle{1}$};
\draw (0,-1) rectangle (1,0); \path (0.5,-0.5) node {$\scriptstyle{2}$};
\draw (0,-2) rectangle (1,-1); \path (0.5,-1.5) node {$\scriptstyle{3}$};
\draw (1,0) rectangle (2,1); \path (1.5,0.5) node {$\scriptstyle{4}$};
\draw (1,-1) rectangle (2,0); \path (1.5,-0.5) node {$\scriptstyle{5}$};
\draw (2,0) rectangle (3,1); \path (2.5,0.5) node {$\scriptstyle{6}$};
\end{tikzpicture}\end{array}
&\!\!\overset{s_{3}}{\longrightarrow}\!\!&
 \begin{array}{c}\begin{tikzpicture}[scale=.30,line width=1.0pt] 
\draw (0,0) rectangle (1,1); \path (0.5,0.5) node {$\scriptstyle{1}$};
\draw (0,-1) rectangle (1,0); \path (0.5,-0.5) node {$\scriptstyle{2}$};
\draw (0,-2) rectangle (1,-1); \path (0.5,-1.5) node {$\scriptstyle{4}$};
\draw (1,0) rectangle (2,1); \path (1.5,0.5) node {$\scriptstyle{3}$};
\draw (1,-1) rectangle (2,0); \path (1.5,-0.5) node {$\scriptstyle{5}$};
\draw (2,0) rectangle (3,1); \path (2.5,0.5) node {$\scriptstyle{6}$};
\end{tikzpicture}\end{array} & 
\!\!\overset{e}{\longrightarrow}\!\! & 
  \begin{array}{c}\begin{tikzpicture}[scale=.30,line width=1.0pt] 
\draw (0,0) rectangle (1,1); \path (0.5,0.5) node {$\scriptstyle{1}$};
\draw (0,-1) rectangle (1,0); \path (0.5,-0.5) node {$\scriptstyle{2}$};
\draw (0,-2) rectangle (1,-1); \path (0.5,-1.5) node {$\scriptstyle{4}$};
\draw (1,0) rectangle (2,1); \path (1.5,0.5) node {$\scriptstyle{3}$};
\draw (1,-1) rectangle (2,0); \path (1.5,-0.5) node {$\scriptstyle{5}$};
\draw (2,0) rectangle (3,1); \path (2.5,0.5) node {$\scriptstyle{6}$};
\end{tikzpicture}\end{array} & 
\!\!\overset{e}{\longrightarrow}\!\! & 
 \begin{array}{c}\begin{tikzpicture}[scale=.30,line width=1.0pt] 
\draw (0,0) rectangle (1,1); \path (0.5,0.5) node {$\scriptstyle{1}$};
\draw (0,-1) rectangle (1,0); \path (0.5,-0.5) node {$\scriptstyle{2}$};
\draw (0,-2) rectangle (1,-1); \path (0.5,-1.5) node {$\scriptstyle{4}$};
\draw (1,0) rectangle (2,1); \path (1.5,0.5) node {$\scriptstyle{3}$};
\draw (1,-1) rectangle (2,0); \path (1.5,-0.5) node {$\scriptstyle{5}$};
\draw (2,0) rectangle (3,1); \path (2.5,0.5) node {$\scriptstyle{6}$};
\end{tikzpicture}\end{array} & 
\!\!\overset{e}{\longrightarrow}\!\!&
 \begin{array}{c}\begin{tikzpicture}[scale=.30,line width=1.0pt] 
\draw (0,0) rectangle (1,1); \path (0.5,0.5) node {$\scriptstyle{1}$};
\draw (0,-1) rectangle (1,0); \path (0.5,-0.5) node {$\scriptstyle{2}$};
\draw (0,-2) rectangle (1,-1); \path (0.5,-1.5) node {$\scriptstyle{4}$};
\draw (1,0) rectangle (2,1); \path (1.5,0.5) node {$\scriptstyle{3}$};
\draw (1,-1) rectangle (2,0); \path (1.5,-0.5) node {$\scriptstyle{5}$};
\draw (2,0) rectangle (3,1); \path (2.5,0.5) node {$\scriptstyle{6}$};
\end{tikzpicture}\end{array} & 
\!\!\overset{s_3}{\longrightarrow}\!\! &
\begin{array}{c}\begin{tikzpicture}[scale=.30,line width=1.0pt] 
\draw (0,0) rectangle (1,1); \path (0.5,0.5) node {$\scriptstyle{1}$};
\draw (0,-1) rectangle (1,0); \path (0.5,-0.5) node {$\scriptstyle{2}$};
\draw (0,-2) rectangle (1,-1); \path (0.5,-1.5) node {$\scriptstyle{3}$};
\draw (1,0) rectangle (2,1); \path (1.5,0.5) node {$\scriptstyle{4}$};
\draw (1,-1) rectangle (2,0); \path (1.5,-0.5) node {$\scriptstyle{5}$};
\draw (2,0) rectangle (3,1); \path (2.5,0.5) node {$\scriptstyle{6}$};
\end{tikzpicture}\end{array}\bigg)\
\end{array}
$$
The sequence $\nu$, below, is \emph{not} a path in $\B_6^{(3,2,1)}$, and thus not a subpath of $\pi$, because at the last step the tableau is not standard.
$$
\begin{array}{r rc cc cc cc cc cc |}
\nu = & \bigg(
\begin{array}{c}\begin{tikzpicture}[scale=.30,line width=1.0pt] 
\draw (0,0) rectangle (1,1); \path (0.5,0.5) node {$\scriptstyle{1}$};
\draw (0,-1) rectangle (1,0); \path (0.5,-0.5) node {$\scriptstyle{2}$};
\draw (0,-2) rectangle (1,-1); \path (0.5,-1.5) node {$\scriptstyle{3}$};
\draw (1,0) rectangle (2,1); \path (1.5,0.5) node {$\scriptstyle{4}$};
\draw (1,-1) rectangle (2,0); \path (1.5,-0.5) node {$\scriptstyle{5}$};
\draw (2,0) rectangle (3,1); \path (2.5,0.5) node {$\scriptstyle{6}$};
\end{tikzpicture}\end{array}
&\!\!\overset{s_{3}}{\longrightarrow}\!\!&
 \begin{array}{c}\begin{tikzpicture}[scale=.30,line width=1.0pt] 
\draw (0,0) rectangle (1,1); \path (0.5,0.5) node {$\scriptstyle{1}$};
\draw (0,-1) rectangle (1,0); \path (0.5,-0.5) node {$\scriptstyle{2}$};
\draw (0,-2) rectangle (1,-1); \path (0.5,-1.5) node {$\scriptstyle{4}$};
\draw (1,0) rectangle (2,1); \path (1.5,0.5) node {$\scriptstyle{3}$};
\draw (1,-1) rectangle (2,0); \path (1.5,-0.5) node {$\scriptstyle{5}$};
\draw (2,0) rectangle (3,1); \path (2.5,0.5) node {$\scriptstyle{6}$};
\end{tikzpicture}\end{array} & 
\!\!\overset{s_{2}}{\longrightarrow}\!\! & 
 \begin{array}{c}\begin{tikzpicture}[scale=.30,line width=1.0pt] 
\draw (0,0) rectangle (1,1); \path (0.5,0.5) node {$\scriptstyle{1}$};
\draw (0,-1) rectangle (1,0); \path (0.5,-0.5) node {$\scriptstyle{3}$};
\draw (0,-2) rectangle (1,-1); \path (0.5,-1.5) node {$\scriptstyle{4}$};
\draw (1,0) rectangle (2,1); \path (1.5,0.5) node {$\scriptstyle{2}$};
\draw (1,-1) rectangle (2,0); \path (1.5,-0.5) node {$\scriptstyle{5}$};
\draw (2,0) rectangle (3,1); \path (2.5,0.5) node {$\scriptstyle{6}$};
\end{tikzpicture}\end{array} & 
\!\!\overset{s_{5}}{\longrightarrow}\!\! & 
 \begin{array}{c}\begin{tikzpicture}[scale=.30,line width=1.0pt] 
\draw (0,0) rectangle (1,1); \path (0.5,0.5) node {$\scriptstyle{1}$};
\draw (0,-1) rectangle (1,0); \path (0.5,-0.5) node {$\scriptstyle{3}$};
\draw (0,-2) rectangle (1,-1); \path (0.5,-1.5) node {$\scriptstyle{4}$};
\draw (1,0) rectangle (2,1); \path (1.5,0.5) node {$\scriptstyle{2}$};
\draw (1,-1) rectangle (2,0); \path (1.5,-0.5) node {$\scriptstyle{6}$};
\draw (2,0) rectangle (3,1); \path (2.5,0.5) node {$\scriptstyle{5}$};
\end{tikzpicture}\end{array} & 
\!\!\overset{e}{\longrightarrow}\!\!&
 \begin{array}{c}\begin{tikzpicture}[scale=.30,line width=1.0pt] 
\draw (0,0) rectangle (1,1); \path (0.5,0.5) node {$\scriptstyle{1}$};
\draw (0,-1) rectangle (1,0); \path (0.5,-0.5) node {$\scriptstyle{3}$};
\draw (0,-2) rectangle (1,-1); \path (0.5,-1.5) node {$\scriptstyle{4}$};
\draw (1,0) rectangle (2,1); \path (1.5,0.5) node {$\scriptstyle{2}$};
\draw (1,-1) rectangle (2,0); \path (1.5,-0.5) node {$\scriptstyle{6}$};
\draw (2,0) rectangle (3,1); \path (2.5,0.5) node {$\scriptstyle{5}$};
\end{tikzpicture}\end{array} & 
\!\!\overset{s_{3}}{\longrightarrow}\!\!&
 \begin{array}{c}\begin{tikzpicture}[scale=.30,line width=1.0pt] 
\draw (0,0) rectangle (1,1); \path (0.5,0.5) node {$\scriptstyle{1}$};
\draw (0,-1) rectangle (1,0); \path (0.5,-0.5) node {$\scriptstyle{4}$};
\draw (0,-2) rectangle (1,-1); \path (0.5,-1.5) node {$\scriptstyle{3}$};
\draw (1,0) rectangle (2,1); \path (1.5,0.5) node {$\scriptstyle{2}$};
\draw (1,-1) rectangle (2,0); \path (1.5,-0.5) node {$\scriptstyle{6}$};
\draw (2,0) rectangle (3,1); \path (2.5,0.5) node {$\scriptstyle{5}$};
\end{tikzpicture}\end{array}\bigg)
\end{array}
$$
\end{example}

Given a path $\pi = (\T_0 \overset{s_{i_1}}{\longrightarrow} \T_1  \overset{s_{i_2}}{\longrightarrow}  \T_2 \overset{s_{i_3}}{\longrightarrow} \cdots \overset{s_{i_k}}{\longrightarrow} \T_k)$ and a subpath $\omega = ( \T_0 = \S_0  \overset{z_{1}}{\longrightarrow} \S_1  \overset{z_{2}}{\longrightarrow}  \S_2 \overset{z_{3}}{\longrightarrow} \cdots \overset{z_{k}}{\longrightarrow} \S_k)$ of $\pi$ in $\mathcal{B}_n^{\lambda/\mu}$, we define the \defn{$\pi$-weight} of $\omega$ to be
\begin{equation}\label{Sn:weight}
\wt_\pi(\omega) = \prod_{j=1}^k b_j, 
\qquad\text{where}\quad
b_j = \begin{cases}
a_{i_j} (\S_{j-1}), & \text{if $\S_{j-1}\overset{e}{\longrightarrow} \S_{j}$,  i.e.,  if $\S_j = \S_{j-1}$}, \\
a_{i_j} (\S_{j-1})+1, & \text{if $\S_{j-1}\overset{s_{i_j}}{\longrightarrow} \S_{j}$, i.e., if $\S_j = s_{i_j}(\S_{i-1})$},
\end{cases}
\end{equation}
with $\wt_\emptyset(\emptyset) = 1$.
Alternatively, we can describe $\wt_\pi(\omega)$ with a Kronecker delta:
\begin{equation}
\wt_\pi(\omega) = \prod_{j=1}^k a_{i_j}(\S_{j-1}) + 1 - \delta_{\S_{j-1},\S_j}.
\end{equation}

\noindent The following is our main theorem.

\begin{thm}\label{thm:main} For a skew shape $\lambda/\mu$ with $n$ boxes, the change-of-basis expressing the natural basis $\mathcal{N}^{\lambda/\mu}$ in terms of the seminormal basis $\mathcal{V}^{\lambda/\mu}$ of  $\Sym_n^{\lambda/\mu}$ satisfies
$$
\n_\T = \sum_{\S \le \T} A_{\S,\T} \, \v_\S \qquad\hbox{with coefficients}\qquad A_{\S,\T} = \sum_{\omega \subseteq \pi} \wt_{\pi}(\omega),
$$
where the first sum is over standard tableaux $\S \le \T$ in (strong) Bruhat order;
$\pi$  is any fixed path from  $\C$ to $\T$ in $\B_n^{\lambda/\mu}$;
and the second sum is over all subpaths $\omega \subseteq\pi$ which terminate at $\S$.
\end{thm}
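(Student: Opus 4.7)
The plan is to prove the formula by induction on the length $k$ of the fixed path $\pi$ from $\C$ to $\T$ in $\B_n^{\lambda/\mu}$. The base case $k=0$ has $\T = \C$: the only subpath is the empty one, which by convention has weight $1$ and terminates at $\C$, matching $\n_\C = \v_\C$ from \eqref{eq:YoungNatural}.

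For the inductive step, write $\pi = (\C = \T_0 \overset{s_{i_1}}{\to} \cdots \overset{s_{i_k}}{\to} \T_k = \T)$ and let $\pi'$ denote its initial segment of length $k-1$, ending at $\T' := s_{i_k}(\T)$. Combining $\n_\T = s_{i_k} \n_{\T'}$ (from \eqref{eq:YoungNatural}), the inductive hypothesis applied to $\pi'$, and the seminormal action \eqref{eq:seminormal}, and then collecting the coefficient of $\v_\S$, gives the recursion
$$
A_{\S,\T} = a_{i_k}(\S)\, A_{\S,\T'} + \bigl(1 + a_{i_k}(s_{i_k}(\S))\bigr)\, A_{s_{i_k}(\S), \T'},
$$
where the second term is set to zero when $s_{i_k}(\S)$ is nonstandard. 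On the combinatorial side, every subpath $\omega \subseteq \pi$ terminating at $\S$ arises uniquely from a subpath $\omega' \subseteq \pi'$ by appending either an $e$-step (requiring $\omega'$ to terminate at $\S$, contributing the factor $a_{i_k}(\S)$ per \eqref{Sn:weight}) or an $s_{i_k}$-step (requiring $\omega'$ to terminate at $s_{i_k}(\S)$, available only when $s_{i_k}(\S)$ is standard, contributing the factor $1 + a_{i_k}(s_{i_k}(\S))$). Summing over these two cases and invoking the inductive formula for $\pi'$ reproduces exactly the same recursion, completing the induction.

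To establish $A_{\S,\T} = 0$ unless $\S \le \T$ in (strong) Bruhat order, I would specialize to a minimal path $\pi$: its word is then a reduced expression for $w_\T$, and the word of any subpath terminating at $\S$ equals $w_\S$ and appears as a subword of this reduced expression. The subword property \cite[Thm.~2.2.2]{BB} yields $w_\S \le w_\T$, i.e.\ $\S \le \T$; since the inductive argument above shows the formula agrees with the intrinsic definition \eqref{TransitionCoefficients} for every choice of $\pi$, this support restriction holds in general. The main subtlety is bookkeeping when $s_{i_k}(\S)$ is nonstandard: one must verify that both sides of the recursion treat such contributions as zero, so the two-way correspondence between subpath extensions and recursion terms goes through cleanly.
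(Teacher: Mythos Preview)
Your proposal is correct and follows essentially the same approach as the paper's proof: induction on the length of $\pi$, deriving the same recursion \eqref{RecursiveFormulaI} from $\n_\T = s_{i_k}\n_{\T'}$ and the seminormal action, matching it against the two ways of extending subpaths of $\pi'$ to subpaths of $\pi$, and then specializing to a minimal $\pi$ to invoke the subword property for the Bruhat support. The bookkeeping you flag for the nonstandard $s_{i_k}(\S)$ case is exactly what the paper handles by splitting \eqref{RecursiveFormulaI} into two cases.
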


\begin{proof}
Let $\T \in \SYT(\lambda/\mu)$ and let $\pi = (\C=\T_0 \overset{s_{i_1}}{\longrightarrow} \T_1  \overset{s_{i_2}}{\longrightarrow}  \T_2 \overset{s_{i_3}}{\longrightarrow} \cdots \overset{s_{i_k}}{\longrightarrow} \T_k= \T)$ be a path to $\T$ (not necessarily minimal). We prove the result by induction on the length $k$ of $\pi$. If $k = 0$, then $\pi= \emptyset$ is the empty path. In this case, $\T = \C$, $\n_\C = \v_\C$, there are no subpaths of $\pi$ other than $\pi$ itself, and $A_{\C,\C} =  \wt_\emptyset(\emptyset) =1$. 

Now assume $\pi$ has length $k \ge 1$.  Let $\T' = \T_{k-1}$ so that $s_{i_k}(\T') = \T$ and  $\pi' = (\C=\T_0 \overset{s_{i_1}}{\longrightarrow} \T_1  \overset{s_{i_2}}{\longrightarrow}  \cdots \overset{s_{i_{k-1}}}{\longrightarrow} \T_{k-1}= \T')$ is a  path to $\T'$. We have
\begin{align*}
\n_{\T} = \n_{s_{i_k}(\T')} = s_{i_k}  \n_{\T'} & = s_{i_k}  \sum_{\U\in \SYT(\lambda/\mu)} A_{\U,\T'} \v_\U 
= \sum_{\U\in \SYT(\lambda/\mu)} A_{\U,\T'} \left( s_{i_k}  \v_\U \right) \\
& =  \sum_{\U\in \SYT(\lambda/\mu)} A_{\U,\T'} \left( a \v_\U  + (1 + a) \v_{s_{i_k}(\U)} \right) \\
& =  \sum_{\U\in \SYT(\lambda/\mu)} A_{\U,\T'}  a \v_\U  + A_{\U,\T'} (1 + a) \v_{s_{i_k}(\U)},
\end{align*}
where $a = a_{i_k}(\U)$ and where $\v_{s_{i_k}(\U)} = 0$ if $s_{i_k}(\U)$ is nonstandard. The terms which contribute to $A_{\S,\T}$ are those where $\v_\S = \v_\U$ and those where $\v_\S = \v_{s_{i_k}(\U)}$ when $s_{i_k}(\S) = \U$ is standard.
Thus,
\begin{equation}\label{RecursiveFormulaI}
A_{\S,\T} = 
\begin{cases}
 a_{i_k}(\S) A_{\S,\T'} + \left(1 + a_{i_k}(\S') \right)  A_{\S',\T'} & \text{if $s_{i_k}(\S) = \S'$ is standard}, \\
a_{i_k}(\S) A_{\S,\T'} & \text{if $s_{i_k}(\S)$ is nonstandard}. \\
 \end{cases}
\end{equation}

By the inductive hypothesis,
$$
A_{\S,\T'}  = \sum_{\omega_1' \subseteq \pi'} \wt_{\pi'}(\omega_1') \quad\hbox{and}\quad
A_{\S',\T'}  = \sum_{\omega_2' \subseteq \pi'} \wt_{\pi'}(\omega_2'), 
$$
where the first sum is over subpaths $\omega_1' \subseteq \pi'$ which terminate at $\S$ and the second is over subpaths $\omega_2' \subseteq \pi'$ which terminate at $\S' = s_{i_k}(\S)$.  In the first case, we append the edge $\S  \overset{e}{\longrightarrow} \S$ to $\omega_1'$ to obtain $\omega_1 \subseteq \pi$ and in the second case we append the edge 
$\S' \overset{s_{i_k}}{\longrightarrow}  \S$ to $\omega_2'$ to obtain $\omega_2 \subseteq \pi$ as seen here:
\begin{align*}
\omega_1 &= (
\underbrace{\C=\S_0 \overset{z_1}{\longrightarrow} \S_1  \overset{z_2}{\longrightarrow}  \S_2 \overset{z_3}{\longrightarrow} \cdots \overset{z_{k-1}}{\longrightarrow} \S}_{\omega_1'}  \overset{e}{\longrightarrow} \S), \\
\omega_2 &= (
\underbrace{\C=\S_0 \overset{z_1}{\longrightarrow} \S_1  \overset{z_2}{\longrightarrow}  \S_2 \overset{z_3}{\longrightarrow} \cdots \overset{z_{k-1}}{\longrightarrow} \S'}_{\omega_2'}  \overset{s_{i_k}}{\longrightarrow}  \S).
\end{align*}
In the first case, $\wt_\pi(\omega_1) = \wt_{\pi'}(\omega_1') a_{i_k}(\S)$ and in the second case $\wt_\pi(\omega_2) = \wt_{\pi'}(\omega_2') (1+a_{i_k}(\S'))$. Moreover, every subpath of $\pi$ must end with an edge weighted by $s_{i_k}$ or $e$ and therefore is of the form $\omega_1 = \omega_1' \overset{e}{\longrightarrow} \S$ or $\omega_2 = \omega_2'\overset{s_{i_k}}{\longrightarrow} \S$, where $\omega_1'$ is a subpath of $\pi'$ ending at $\S$ and $\omega_2'$ is a subpath of $\pi'$ ending at $\S'$. Thus, the sum over $\pi$-weights on all subpaths of $\pi$ equals $A_{\S,\T}$.

Notice that by construction, the definition of $A_{\S,\T}$ is independent of the particular path $\pi$ to $\T$. Thus we can choose a \emph{minimal} path $\pi = (\C=\T_0 \overset{s_{j_1}}{\longrightarrow} \T_1  \overset{s_{j_2}}{\longrightarrow}   
\cdots \overset{s_{j_m}}{\longrightarrow} \T_m= \T)$ from $\C$ to $\T$ so that its corresponding word $w = s_{j_m} s_{j_{m-1}} \cdots s_{j_1}$ is a reduced word for $w_\T$. If $\omega = ( \C = \S_0  \overset{z_{1}}{\longrightarrow} \S_1  \overset{z_{2}}{\longrightarrow}  
\cdots \overset{z_{m}}{\longrightarrow} \S_m = \S)$ is a subpath of $\pi$ which terminates at $\S$, then $w' = z_m z_{m-1} \cdots z_1$ is a permutation such that $w'(\C) = \S$. Since  $z_i \in\{ s_{j_i},e\}$ for each $i$,  $w' = z_m z_{m-1} \cdots z_1$ is a subword of $w$. It follows from the subword property (see Section \ref{subsec:BruhatTab}) that $w' \le w$, and thus $\S \le \T$. Therefore, the only $\S \in \SYT(\lambda/\mu)$ for which $A_{\S,\T}$ is nonzero are those for which $\S \le \T$ in Bruhat order, so that
$\n_\T = \sum_{\S \le \T} A_{\S,\T} \, \v_\S,$ where $A_{\S,\T} = \sum_{\omega \subseteq \pi} \wt_{\pi}(\omega).$
\end{proof}

\begin{example}\label{ex:path2}  We illustrate Theorem \ref{thm:main} for $\lambda = (3,2,1)$. The weak Bruhat graph $\B_6^{(3,2,1)}$ is shown in Figure \ref{fig:Bruhat321} and the transition matrix $\mathcal{A}_{(3,2,1)}$ is given in Example \ref{ex:diagonal_submatrix}. We compute the matrix entry $A_{\S,\T}$ for 
$$
\T =  \begin{array}{c}\begin{tikzpicture}[scale=.37,line width=1.0pt] 
\draw (0,0) rectangle (1,1); \path (0.5,0.5) node {$1$};
\draw (0,-1) rectangle (1,0); \path (0.5,-0.5) node {$4$};
\draw (0,-2) rectangle (1,-1); \path (0.5,-1.5) node {$5$};
\draw (1,0) rectangle (2,1); \path (1.5,0.5) node {$2$};
\draw (1,-1) rectangle (2,0); \path (1.5,-0.5) node {$6$};
\draw (2,0) rectangle (3,1); \path (2.5,0.5) node {$3$};
\end{tikzpicture}\end{array}\qquad\hbox{and}\qquad
\S = \begin{array}{c}\begin{tikzpicture}[scale=.37,line width=1.0pt] 
\draw (0,0) rectangle (1,1); \path (0.5,0.5) node {$1$};
\draw (0,-1) rectangle (1,0); \path (0.5,-0.5) node {$2$};
\draw (0,-2) rectangle (1,-1); \path (0.5,-1.5) node {$4$};
\draw (1,0) rectangle (2,1); \path (1.5,0.5) node {$3$};
\draw (1,-1) rectangle (2,0); \path (1.5,-0.5) node {$6$};
\draw (2,0) rectangle (3,1); \path (2.5,0.5) node {$5$};
\end{tikzpicture}\end{array}.
$$
Start with a path $\pi$ in $\B_6^{(3,2,1)}$ from $\C$ to $\T$ (our path is minimal, but as noted, this is not necessary):
$$
\begin{array}{r rc cc cc cc cc c }
\pi = & \bigg( \C =
\!\begin{array}{c}\begin{tikzpicture}[scale=.30,line width=1.0pt] 
\draw (0,0) rectangle (1,1); \path (0.5,0.5) node {$\scriptstyle{1}$};
\draw (0,-1) rectangle (1,0); \path (0.5,-0.5) node {$\scriptstyle{2}$};
\draw (0,-2) rectangle (1,-1); \path (0.5,-1.5) node {$\scriptstyle{3}$};
\draw (1,0) rectangle (2,1); \path (1.5,0.5) node {$\scriptstyle{4}$};
\draw (1,-1) rectangle (2,0); \path (1.5,-0.5) node {$\scriptstyle{5}$};
\draw (2,0) rectangle (3,1); \path (2.5,0.5) node {$\scriptstyle{6}$};
\end{tikzpicture}\end{array}\!
&\!\!\overset{s_{3}}{\longrightarrow}\!\!&
\! \begin{array}{c}\begin{tikzpicture}[scale=.30,line width=1.0pt] 
\draw (0,0) rectangle (1,1); \path (0.5,0.5) node {$\scriptstyle{1}$};
\draw (0,-1) rectangle (1,0); \path (0.5,-0.5) node {$\scriptstyle{2}$};
\draw (0,-2) rectangle (1,-1); \path (0.5,-1.5) node {$\scriptstyle{4}$};
\draw (1,0) rectangle (2,1); \path (1.5,0.5) node {$\scriptstyle{3}$};
\draw (1,-1) rectangle (2,0); \path (1.5,-0.5) node {$\scriptstyle{5}$};
\draw (2,0) rectangle (3,1); \path (2.5,0.5) node {$\scriptstyle{6}$};
\end{tikzpicture}\end{array} \!& 
\!\!\overset{s_{2}}{\longrightarrow}\!\! & 
\! \begin{array}{c}\begin{tikzpicture}[scale=.30,line width=1.0pt] 
\draw (0,0) rectangle (1,1); \path (0.5,0.5) node {$\scriptstyle{1}$};
\draw (0,-1) rectangle (1,0); \path (0.5,-0.5) node {$\scriptstyle{3}$};
\draw (0,-2) rectangle (1,-1); \path (0.5,-1.5) node {$\scriptstyle{4}$};
\draw (1,0) rectangle (2,1); \path (1.5,0.5) node {$\scriptstyle{2}$};
\draw (1,-1) rectangle (2,0); \path (1.5,-0.5) node {$\scriptstyle{5}$};
\draw (2,0) rectangle (3,1); \path (2.5,0.5) node {$\scriptstyle{6}$};
\end{tikzpicture}\end{array} \!& 
\!\!\overset{s_{5}}{\longrightarrow}\!\! & 
\! \begin{array}{c}\begin{tikzpicture}[scale=.30,line width=1.0pt] 
\draw (0,0) rectangle (1,1); \path (0.5,0.5) node {$\scriptstyle{1}$};
\draw (0,-1) rectangle (1,0); \path (0.5,-0.5) node {$\scriptstyle{3}$};
\draw (0,-2) rectangle (1,-1); \path (0.5,-1.5) node {$\scriptstyle{4}$};
\draw (1,0) rectangle (2,1); \path (1.5,0.5) node {$\scriptstyle{2}$};
\draw (1,-1) rectangle (2,0); \path (1.5,-0.5) node {$\scriptstyle{6}$};
\draw (2,0) rectangle (3,1); \path (2.5,0.5) node {$\scriptstyle{5}$};
\end{tikzpicture}\end{array}\! & 
\!\!\overset{s_{4}}{\longrightarrow}\!\!&
\! \begin{array}{c}\begin{tikzpicture}[scale=.30,line width=1.0pt] 
\draw (0,0) rectangle (1,1); \path (0.5,0.5) node {$\scriptstyle{1}$};
\draw (0,-1) rectangle (1,0); \path (0.5,-0.5) node {$\scriptstyle{3}$};
\draw (0,-2) rectangle (1,-1); \path (0.5,-1.5) node {$\scriptstyle{5}$};
\draw (1,0) rectangle (2,1); \path (1.5,0.5) node {$\scriptstyle{2}$};
\draw (1,-1) rectangle (2,0); \path (1.5,-0.5) node {$\scriptstyle{6}$};
\draw (2,0) rectangle (3,1); \path (2.5,0.5) node {$\scriptstyle{4}$};
\end{tikzpicture}\end{array}\! & 
\!\!\overset{s_{3}}{\longrightarrow}\!\!&
 \!\begin{array}{c}\begin{tikzpicture}[scale=.30,line width=1.0pt] 
\draw (0,0) rectangle (1,1); \path (0.5,0.5) node {$\scriptstyle{1}$};
\draw (0,-1) rectangle (1,0); \path (0.5,-0.5) node {$\scriptstyle{4}$};
\draw (0,-2) rectangle (1,-1); \path (0.5,-1.5) node {$\scriptstyle{5}$};
\draw (1,0) rectangle (2,1); \path (1.5,0.5) node {$\scriptstyle{2}$};
\draw (1,-1) rectangle (2,0); \path (1.5,-0.5) node {$\scriptstyle{6}$};
\draw (2,0) rectangle (3,1); \path (2.5,0.5) node {$\scriptstyle{3}$};
\end{tikzpicture}\end{array}\! = \T \bigg).
\end{array}
$$
There are the two subpaths $\omega_1$ and $\omega_2$ of $\pi$ which terminate at $\S$. Below each subpath we show its $\pi$-weight.
$$
\begin{array}{r rc cc cc cc cc c }
\omega_1 = &\bigg(\C=
\!\begin{array}{c}\begin{tikzpicture}[scale=.30,line width=1.0pt] 
\draw (0,0) rectangle (1,1); \path (0.5,0.5) node {$\scriptstyle{1}$};
\draw (0,-1) rectangle (1,0); \path (0.5,-0.5) node {$\scriptstyle{2}$};
\draw (0,-2) rectangle (1,-1); \path (0.5,-1.5) node {$\scriptstyle{3}$};
\draw (1,0) rectangle (2,1); \path (1.5,0.5) node {$\scriptstyle{4}$};
\draw (1,-1) rectangle (2,0); \path (1.5,-0.5) node {$\scriptstyle{5}$};
\draw (2,0) rectangle (3,1); \path (2.5,0.5) node {$\scriptstyle{6}$};
\end{tikzpicture}\end{array}
&\hskip-.085in\overset{s_{3}}{\longrightarrow}\hskip-.085in&
\! \begin{array}{c}\begin{tikzpicture}[scale=.30,line width=1.0pt] 
\draw (0,0) rectangle (1,1); \path (0.5,0.5) node {$\scriptstyle{1}$};
\draw (0,-1) rectangle (1,0); \path (0.5,-0.5) node {$\scriptstyle{2}$};
\draw (0,-2) rectangle (1,-1); \path (0.5,-1.5) node {$\scriptstyle{4}$};
\draw (1,0) rectangle (2,1); \path (1.5,0.5) node {$\scriptstyle{3}$};
\draw (1,-1) rectangle (2,0); \path (1.5,-0.5) node {$\scriptstyle{5}$};
\draw (2,0) rectangle (3,1); \path (2.5,0.5) node {$\scriptstyle{6}$};
\end{tikzpicture}\end{array} & 
\hskip-.085in\overset{e}{\longrightarrow}\hskip-.085in& 
\! \begin{array}{c}\begin{tikzpicture}[scale=.30,line width=1.0pt] 
\draw (0,0) rectangle (1,1); \path (0.5,0.5) node {$\scriptstyle{1}$};
\draw (0,-1) rectangle (1,0); \path (0.5,-0.5) node {$\scriptstyle{2}$};
\draw (0,-2) rectangle (1,-1); \path (0.5,-1.5) node {$\scriptstyle{4}$};
\draw (1,0) rectangle (2,1); \path (1.5,0.5) node {$\scriptstyle{3}$};
\draw (1,-1) rectangle (2,0); \path (1.5,-0.5) node {$\scriptstyle{5}$};
\draw (2,0) rectangle (3,1); \path (2.5,0.5) node {$\scriptstyle{6}$};
\end{tikzpicture}\end{array} & 
\hskip-.085in\overset{s_{5}}{\longrightarrow}\hskip-.085in & 
\! \begin{array}{c}\begin{tikzpicture}[scale=.30,line width=1.0pt] 
\draw (0,0) rectangle (1,1); \path (0.5,0.5) node {$\scriptstyle{1}$};
\draw (0,-1) rectangle (1,0); \path (0.5,-0.5) node {$\scriptstyle{2}$};
\draw (0,-2) rectangle (1,-1); \path (0.5,-1.5) node {$\scriptstyle{4}$};
\draw (1,0) rectangle (2,1); \path (1.5,0.5) node {$\scriptstyle{3}$};
\draw (1,-1) rectangle (2,0); \path (1.5,-0.5) node {$\scriptstyle{6}$};
\draw (2,0) rectangle (3,1); \path (2.5,0.5) node {$\scriptstyle{5}$};
\end{tikzpicture}\end{array} & 
\hskip-.085in\overset{e}{\longrightarrow}\hskip-.085in&
 \begin{array}{c}\begin{tikzpicture}[scale=.30,line width=1.0pt] 
\draw (0,0) rectangle (1,1); \path (0.5,0.5) node {$\scriptstyle{1}$};
\draw (0,-1) rectangle (1,0); \path (0.5,-0.5) node {$\scriptstyle{2}$};
\draw (0,-2) rectangle (1,-1); \path (0.5,-1.5) node {$\scriptstyle{4}$};
\draw (1,0) rectangle (2,1); \path (1.5,0.5) node {$\scriptstyle{3}$};
\draw (1,-1) rectangle (2,0); \path (1.5,-0.5) node {$\scriptstyle{6}$};
\draw (2,0) rectangle (3,1); \path (2.5,0.5) node {$\scriptstyle{5}$};
\end{tikzpicture}\end{array} & 
\hskip-.085in\overset{e}{\longrightarrow}\hskip-.085in &
\! \begin{array}{c}\begin{tikzpicture}[scale=.30,line width=1.0pt] 
\draw (0,0) rectangle (1,1); \path (0.5,0.5) node {$\scriptstyle{1}$};
\draw (0,-1) rectangle (1,0); \path (0.5,-0.5) node {$\scriptstyle{2}$};
\draw (0,-2) rectangle (1,-1); \path (0.5,-1.5) node {$\scriptstyle{4}$};
\draw (1,0) rectangle (2,1); \path (1.5,0.5) node {$\scriptstyle{3}$};
\draw (1,-1) rectangle (2,0); \path (1.5,-0.5) node {$\scriptstyle{6}$};
\draw (2,0) rectangle (3,1); \path (2.5,0.5) node {$\scriptstyle{5}$};
\end{tikzpicture}\end{array}\! = \S \bigg),
\\ 
&&\hskip-.085in 1 + \frac{1}{3} \hskip-.085in&&\hskip-.085in \frac{1}{2} \hskip-.085in&&\hskip-.085in 1 + \frac{1}{2} \hskip-.085in&& \hskip-.085in \frac{1}{4} \hskip-.085in&&\hskip-.085in- \frac{1}{3} \hskip-.085in& 
\\ \\ 
\omega_2 = & \bigg(\C=
\!\begin{array}{c}\begin{tikzpicture}[scale=.30,line width=1.0pt] 
\draw (0,0) rectangle (1,1); \path (0.5,0.5) node {$\scriptstyle{1}$};
\draw (0,-1) rectangle (1,0); \path (0.5,-0.5) node {$\scriptstyle{2}$};
\draw (0,-2) rectangle (1,-1); \path (0.5,-1.5) node {$\scriptstyle{3}$};
\draw (1,0) rectangle (2,1); \path (1.5,0.5) node {$\scriptstyle{4}$};
\draw (1,-1) rectangle (2,0); \path (1.5,-0.5) node {$\scriptstyle{5}$};
\draw (2,0) rectangle (3,1); \path (2.5,0.5) node {$\scriptstyle{6}$};
\end{tikzpicture}\end{array}
&\hskip-.085in\overset{e}{\longrightarrow}\hskip-.085in & 
\!\begin{array}{c}\begin{tikzpicture}[scale=.30,line width=1.0pt] 
\draw (0,0) rectangle (1,1); \path (0.5,0.5) node {$\scriptstyle{1}$};
\draw (0,-1) rectangle (1,0); \path (0.5,-0.5) node {$\scriptstyle{2}$};
\draw (0,-2) rectangle (1,-1); \path (0.5,-1.5) node {$\scriptstyle{3}$};
\draw (1,0) rectangle (2,1); \path (1.5,0.5) node {$\scriptstyle{4}$};
\draw (1,-1) rectangle (2,0); \path (1.5,-0.5) node {$\scriptstyle{5}$};
\draw (2,0) rectangle (3,1); \path (2.5,0.5) node {$\scriptstyle{6}$};
\end{tikzpicture}\end{array} \!& 
\hskip-.085in\overset{e}{\longrightarrow}\hskip-.085in & 
\!\begin{array}{c}\begin{tikzpicture}[scale=.30,line width=1.0pt] 
\draw (0,0) rectangle (1,1); \path (0.5,0.5) node {$\scriptstyle{1}$};
\draw (0,-1) rectangle (1,0); \path (0.5,-0.5) node {$\scriptstyle{2}$};
\draw (0,-2) rectangle (1,-1); \path (0.5,-1.5) node {$\scriptstyle{3}$};
\draw (1,0) rectangle (2,1); \path (1.5,0.5) node {$\scriptstyle{4}$};
\draw (1,-1) rectangle (2,0); \path (1.5,-0.5) node {$\scriptstyle{5}$};
\draw (2,0) rectangle (3,1); \path (2.5,0.5) node {$\scriptstyle{6}$};
\end{tikzpicture}\end{array}& 
\hskip-.085in\overset{s_{5}}{\longrightarrow}\hskip-.085in & 
\!\begin{array}{c}\begin{tikzpicture}[scale=.30,line width=1.0pt] 
\draw (0,0) rectangle (1,1); \path (0.5,0.5) node {$\scriptstyle{1}$};
\draw (0,-1) rectangle (1,0); \path (0.5,-0.5) node {$\scriptstyle{2}$};
\draw (0,-2) rectangle (1,-1); \path (0.5,-1.5) node {$\scriptstyle{3}$};
\draw (1,0) rectangle (2,1); \path (1.5,0.5) node {$\scriptstyle{4}$};
\draw (1,-1) rectangle (2,0); \path (1.5,-0.5) node {$\scriptstyle{6}$};
\draw (2,0) rectangle (3,1); \path (2.5,0.5) node {$\scriptstyle{5}$};
\end{tikzpicture}\end{array} & 
\hskip-.085in\overset{e}{\longrightarrow}\hskip-.085in&
\!\begin{array}{c}\begin{tikzpicture}[scale=.30,line width=1.0pt] 
\draw (0,0) rectangle (1,1); \path (0.5,0.5) node {$\scriptstyle{1}$};
\draw (0,-1) rectangle (1,0); \path (0.5,-0.5) node {$\scriptstyle{2}$};
\draw (0,-2) rectangle (1,-1); \path (0.5,-1.5) node {$\scriptstyle{3}$};
\draw (1,0) rectangle (2,1); \path (1.5,0.5) node {$\scriptstyle{4}$};
\draw (1,-1) rectangle (2,0); \path (1.5,-0.5) node {$\scriptstyle{6}$};
\draw (2,0) rectangle (3,1); \path (2.5,0.5) node {$\scriptstyle{5}$};
\end{tikzpicture}\end{array} & 
\hskip-.085in\overset{s_{3}}{\longrightarrow}\hskip-.085in&
\!\begin{array}{c}\begin{tikzpicture}[scale=.30,line width=1.0pt] 
\draw (0,0) rectangle (1,1); \path (0.5,0.5) node {$\scriptstyle{1}$};
\draw (0,-1) rectangle (1,0); \path (0.5,-0.5) node {$\scriptstyle{2}$};
\draw (0,-2) rectangle (1,-1); \path (0.5,-1.5) node {$\scriptstyle{4}$};
\draw (1,0) rectangle (2,1); \path (1.5,0.5) node {$\scriptstyle{3}$};
\draw (1,-1) rectangle (2,0); \path (1.5,-0.5) node {$\scriptstyle{6}$};
\draw (2,0) rectangle (3,1); \path (2.5,0.5) node {$\scriptstyle{5}$};
\end{tikzpicture}\end{array}\! = \S\bigg).
\\ 
&&\hskip-.085in \frac{1}{3} \hskip-.085in&&\hskip-.085in -1 \hskip-.085in&&\hskip-.085in 1 + \frac{1}{2}\hskip-.085in &&\hskip-.085in 1 \hskip-.085in&&\hskip-.085in 1+ \frac{1}{3} \hskip-.085in&
\end{array}
$$
Thus, the $A_{\S,\T}$ entry is the sum of these two weights:
$$
A_{\S,\T} = 
\left(1+\tfrac{1}{3}\right) \cdot \tfrac{1}{2} \cdot \left(1 + \tfrac{1}{2}\right) \cdot \tfrac{1}{4} \cdot \left(- \tfrac{1}{3}\right)+ \tfrac{1}{3} \cdot (-1) \cdot \left(1 + \tfrac{1}{2}\right) \cdot 1 \cdot \left(1 + \tfrac{1}{3}\right) = -\tfrac{1}{12} - \tfrac{2}{3} = - \tfrac{3}{4},
$$
which is the (6,15) entry in Example \ref{ex:diagonal_submatrix}.
\end{example}

\begin{rem}\label{rem:upper-triangular} Let $\mathcal{A}_{\lambda/\mu} = (A_{\S,\T})_{\S,\T \in \SYT(\lambda)}$ denote the transition matrix defined in \eqref{TransitionCoefficients} whose entries are computed in Theorem \ref{thm:main}. This matrix satisfies the following properties:
\begin{enumerate}[(a)]

\item Since $A_{\S,\T} = 0$ if $\S \not\le \T$ in Bruhat order, $\mathcal{A}_{\lambda/\mu}$ is upper-triangular under any linear ordering of standard tableaux that extends  Bruhat order.

\item If $\pi$ is a minimal path from $\C$ to $\T$ of length $k$, then the only subpath of $\pi$ that reaches depth $k$ in the weak Bruhat graph is $\pi$ itself, so $A_{\S,\T} = 0$ if $\S$ has depth $k$ and $\S \not=\T$. It follows that if the standard tableaux are grouped  by depth, the matrix $\mathcal{A}_{\lambda/\mu}$ has diagonal matrix blocks on the main diagonal. Bruhat order respects depth, so any linear extension of Bruhat order will exhibit this phenomenon. See Example \ref{ex:diagonal_submatrix}.
\end{enumerate}
\end{rem}

\begin{example}\label{chgA32}
The weak Bruhat graph $\B_5^{(3,2)}$ and the corresponding transition matrix  $\mathcal{A}_{(3,2)}$ between Young's natural and seminormal representations of $\Sym_5^{(3,2)}$.
\begin{equation*}
\Yboxdim{7pt}
\B^{(3,2)}_5=
\begin{array}{c}
 \includegraphics[height=2.5in,page=1]{BruhatGraphs.pdf} 
 \end{array}\hskip.15in
 \mathcal{A}_{(3,2)}=
\begin{blockarray}{cccccc}
\phantom{\tiny\young(1,1,1)} & \n_{\tiny\young(135,24)}& \n_{\tiny\young(125,34)}&\n_{\tiny\young(134,25)}&\n_{\tiny\young(124,35)}&\n_{\tiny\young(123,45)} \\
\begin{block}{c(ccccc)}
\v_{\tiny\young(135,24)} & 1 & \frac{1}{2} & \frac{1}{2} & \frac{1}{4} & -\frac{1}{4} \\
\v_{\tiny\young(125,34)} & 0 & \frac{3}{2} & 0 & \frac{3}{4} & \frac{3}{4} \\
\v_{\tiny\young(134,25)} & 0 & 0 & \frac{3}{2} & \frac{3}{4} & \frac{3}{4} \\
\v_{\tiny\young(124,35)} & 0 & 0 & 0 & \frac{9}{4} & \frac{3}{4} \\
\v_{\tiny\young(123,45)} & 0 & 0 & 0 & 0 & 3 \\
\end{block}
\end{blockarray}
\end{equation*}
\end{example}

\begin{cor}\label{cor:recursive}
If $\S,\T \in \SYT(\lambda/\mu)$ and $\T'=s_\ell(\T)$ is standard, then
\begin{equation*}
A_{\S,\T} = 
\begin{cases}
 a_{\ell}(\S) A_{\S,\T'} + \left(1 - a_{\ell}(\S) \right)  A_{\S',\T'} & \text{if $s_{\ell}(\S) = \S'$ is standard}, \\
a_{\ell}(\S) A_{\S,\T'} & \text{if $s_{\ell}(\S)$ is nonstandard}. \\
 \end{cases}
\end{equation*}
\end{cor}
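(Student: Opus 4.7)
The plan is to recognize that this corollary is essentially a repackaging of the recursive identity \eqref{RecursiveFormulaI} established inside the proof of Theorem \ref{thm:main}, made available for arbitrary choices of the final step of the path. First I would observe that, since $A_{\S,\T}$ is independent of the path used to compute it (this is the content of the ``notice'' paragraph near the end of the proof of Theorem \ref{thm:main}), given any $\T'$ with $\T = s_\ell(\T')$ standard, we are free to fix some path $\pi'$ from $\C$ to $\T'$ in $\B_n^{\lambda/\mu}$ and extend it by the edge $\T' \overset{s_\ell}{\longrightarrow} \T$ to obtain a path $\pi$ terminating at $\T$. Then the inductive step of the proof of Theorem \ref{thm:main}, applied verbatim with $i_k = \ell$, yields
$$
A_{\S,\T} =
\begin{cases}
 a_{\ell}(\S)\, A_{\S,\T'} + \bigl(1 + a_{\ell}(\S')\bigr) A_{\S',\T'}, & \text{if $\S'=s_\ell(\S)$ is standard}, \\
a_{\ell}(\S)\, A_{\S,\T'}, & \text{if $s_\ell(\S)$ is nonstandard}.
 \end{cases}
$$

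The only remaining step is to rewrite $1 + a_\ell(\S')$ as $1 - a_\ell(\S)$ in the first case. This is immediate from the definition of axial distance: swapping $\ell$ and $\ell+1$ in $\S$ exchanges the boxes containing those two values, so
$$
a_\ell(\S') = \frac{1}{ct(\S'(\ell+1)) - ct(\S'(\ell))} = \frac{1}{ct(\S(\ell)) - ct(\S(\ell+1))} = -a_\ell(\S),
$$
and hence $1 + a_\ell(\S') = 1 - a_\ell(\S)$. Substituting this into the recursion above gives precisely the claimed formula.

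There is essentially no obstacle to overcome here; the real work was done in proving Theorem \ref{thm:main}. The only conceptual points to be careful about are (i) appealing to path-independence of $A_{\S,\T}$ so that the recursion may be applied with the last step of the path being \emph{any} prescribed simple transposition $s_\ell$ relating $\T$ to a standard tableau $\T'$, and (ii) the antisymmetry $a_\ell(s_\ell(\S)) = -a_\ell(\S)$ of the axial-distance weight under swapping $\ell$ and $\ell+1$.
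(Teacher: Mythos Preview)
Your proposal is correct and matches the paper's own proof essentially verbatim: the paper simply cites the recursion \eqref{RecursiveFormulaI} from the proof of Theorem \ref{thm:main} and the identity $a_\ell(\S') = -a_\ell(\S)$. Your additional remark about path-independence is a helpful clarification but does not change the argument.
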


\begin{proof}
This follows from equation  \eqref{RecursiveFormulaI} in the proof of Theorem \ref{thm:main}, and from the fact that $a_\ell(\S') = - a_\ell(\S)$ since $\S' = s_\ell(\S)$.   \end{proof}

\begin{rem} \label{rem:complexity}  Corollary  \ref{cor:recursive}  says that the entries of the $\T$-th column of the matrix $\mathcal{A}_{\lambda/\mu}$ equal a weighted sum of at most two entries of the $\T'$-th column.  For $\T \not=\C$, we can choose $\ell$ so that $s_\ell(\T) \lessdot_W \T$ (i.e., choose $\T'$ immediately above $\T$ in the weak Bruhat graph). Thus, each entry  of $\mathcal{A}_{\lambda/\mu}$ is a weighted sum of two entries in a column to its its left (under any ordering on tableaux that respects Bruhat order). Since $\mathcal{A}_{\lambda/\mu}$ is upper-triangular with $f^{\lambda / \mu}$ rows and columns, computing the columns of $\mathcal{A}_{\lambda/\mu}$ in any order that respects Bruhat order requires a maximum of $2 (1+ 2 + \cdots + f^{\lambda/\mu}) = (f^{\lambda/\mu})^2 + f^{\lambda/\mu}$ operations once the contents of the boxes of the tableaux have been computed (which takes $n$ opertations for each tableau).  At the time of the writing of this paper, we were able to compute the transition matrices for the 42 irreducible representations of $\Sym_{10}$ (3.6 million elements) in a total of 24 minutes using {\it Mathematica} on a laptop computer.
\end{rem}

\Yboxdim{7pt}
\begin{example} We illustrate Corollary \ref{cor:recursive} with $\lambda = (3,2,1)$ and compute two of the entries of Example \ref{ex:diagonal_submatrix}. To compute the entry $A_{\T_1,\T_{16}} = \frac{1}{12}$ we use $\ell = 5$, since $s_5(\T_{16}) = \T_{15}$. Thus,
$$
A_{\tiny{\young(146,25,3)},\tiny\young(123,45,6)}  = \frac{1}{2}  A_{\tiny{\young(146,25,3)},\tiny\young(123,46,5)}  + \left(1 - \frac{1}{2}\right) A_{\tiny{\young(145,26,3)},\tiny\young(123,46,5)}
=  \frac{1}{2} \cdot \frac{1}{6}  + \left(\frac{1}{2}\right) \cdot 0  = \frac{1}{12}.
$$
We could also have chosen $\ell = 3$, since $s_3(\T_{16}) = \T_{14}$. In this case,
$$
A_{\tiny{\young(146,25,3)},\tiny\young(123,45,6)}  = \frac{1}{3} A_{\tiny{\young(146,25,3)},\tiny\young(124,35,6)} + \left(1 - \frac{1}{3}\right) A_{\tiny{\young(136,25,4)},\tiny\young(124,35,6)}
=  \frac{1}{3} \cdot \left(-\frac{1}{6}\right)  + \frac{2}{3} \cdot \frac{5}{24} = \frac{1}{12}.
$$
Similarly, to compute the entry $A_{\T_2, \T_{13}} = \frac{5}{12}$, we choose $\ell = 4$ since $s_4(\T_{13}) = \T_9$, and get
$$
A_{\tiny{\young(136,25,4)},\tiny\young(134,25,6)}  = \frac{1}{2} A_{\tiny{\young(136,25,4)},\tiny\young(135,24,6)} + \left(1 - \frac{1}{2}\right) A_{\tiny{\young(136,24,5)},\tiny\young(135,24,6)}
=  \frac{1}{2} \cdot \frac{1}{3}  + \frac{1}{2} \cdot \frac{1}{2} = \frac{5}{12}.
$$
\end{example}

As in Section \ref{subsec:BruhatTab}, we define an inversion in a standard tableau $\T$ to be  a pair $(i,j)$ such that $i > j$ and $i$ is strictly south and strictly west of $j$ in $\T$, and we let $\inv(\T)$  be the set of inversions in $\T$. Then, the diagonal entries of the transition matrix satisfy the following formula.

\begin{prop}\label{prop:diagonals} For any $\T \in \SYT(\lambda/\mu)$, we have
$\displaystyle{
A_{\T,\T} = \!\prod_{(i,j) \in \inv(\T)} (1 +  {a_{i,j}(\T)}). 
}$
\end{prop}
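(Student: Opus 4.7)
The plan is to prove the identity by induction on the depth of $\T$ in $\B_n^{\lambda/\mu}$, using Corollary \ref{cor:recursive} together with Lemma \ref{lemma:inversionswapping}. The base case $\T = \C$ is immediate: $\inv(\C) = \emptyset$, so the product on the right is the empty product $1$, and Theorem \ref{thm:main} with the empty path gives $A_{\C,\C} = 1$.

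For the inductive step, suppose $\T \neq \C$. Since $\T$ is not the minimum of weak order, I can choose $\ell$ so that $\T' := s_\ell(\T) \lessdot_W \T$ is a weak cover. Corollary \ref{cor:recursive} with $\S = \T$ (so $\S' = \T'$) gives
$$A_{\T,\T} = a_\ell(\T)\, A_{\T,\T'} + (1-a_\ell(\T))\, A_{\T',\T'}.$$
Because $\T' \lessdot \T$ in Bruhat order we have $\T \not\le \T'$, and the upper-triangularity from Theorem \ref{thm:main} (see Remark \ref{rem:upper-triangular}(a)) forces $A_{\T,\T'} = 0$. Hence $A_{\T,\T} = (1 - a_\ell(\T))\, A_{\T',\T'}$. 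A direct check from the definitions shows $1 - a_\ell(\T) = 1 + a_{\ell+1,\ell}(\T)$, since $a_{\ell+1,\ell}(\T) = -a_\ell(\T)$.

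Now I would combine this with Lemma \ref{lemma:inversionswapping}, which states that $\inv(\T) = s_\ell(\inv(\T')) \cup \{(\ell+1,\ell)\}$, to match the inductive product with the desired one. The small computational step needed is to show that for every $(i,j) \in \inv(\T')$, the corresponding factor in $\T'$ equals the factor attached to the relabeled inversion $(s_\ell(i), s_\ell(j))$ in $\T$, i.e.\
$$a_{s_\ell(i),\, s_\ell(j)}(\T) = a_{i,j}(\T').$$
This follows from the fact that $\T = s_\ell(\T')$ permutes only the values $\ell$ and $\ell+1$ without moving any boxes: the box containing value $v$ in $\T$ is the box containing $s_\ell(v)$ in $\T'$, so $\T(s_\ell(i)) = \T'(i)$ and $\T(s_\ell(j)) = \T'(j)$, giving the identity after inserting into the definition in \eqref{def:symmetric-diagonal}.

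Putting the pieces together, using the inductive hypothesis and the bijection of factors,
$$A_{\T,\T} = \bigl(1 + a_{\ell+1,\ell}(\T)\bigr) \prod_{(i,j)\in \inv(\T')} \bigl(1 + a_{i,j}(\T')\bigr) = \bigl(1 + a_{\ell+1,\ell}(\T)\bigr)\!\!\prod_{(i,j)\in s_\ell(\inv(\T'))}\!\! \bigl(1 + a_{i,j}(\T)\bigr) = \prod_{(i,j)\in \inv(\T)} \bigl(1 + a_{i,j}(\T)\bigr),$$
which is the claim. The main obstacle is the bookkeeping identity relating $a_{s_\ell(i),s_\ell(j)}(\T)$ to $a_{i,j}(\T')$, but this is really just a consequence of the value-permuting (rather than box-permuting) nature of the $\Sym_n$-action on tableaux, combined with Lemma \ref{lemma:inversionswapping} to organize the bijection between inversions.
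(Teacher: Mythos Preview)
Your proof is correct and follows essentially the same approach as the paper's: induction on the depth (equivalently, $\ell(w_\T)$), applying Corollary~\ref{cor:recursive} at $\S=\T$, using $A_{\T,\T'}=0$ by upper-triangularity, and then invoking Lemma~\ref{lemma:inversionswapping} together with the position-preserving nature of the value swap to match the inversion factors. Your write-up is slightly more explicit than the paper's about the identity $a_{s_\ell(i),s_\ell(j)}(\T)=a_{i,j}(\T')$, but the argument is the same.
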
 

\begin{proof}
Let $\T \in \SYT(\lambda/\mu)$, let $w_\T = s_{i_k}\cdots s_{i_2} s_{i_1}$ be a reduced word, and proceed by induction on $k$. When $k = 0$,  $A_{\T,\T} = A_{\C,\C} = 1$ is the product over the empty inversion set.  For $k \ge 1$, let $s_{i_k} = s_\ell$ and $\T' = s_\ell( \T)$ so that $\T' \lessdot \T$. By Corollary \ref{cor:recursive},
\begin{equation*}
\label{eq:inductivestep}
A_{\T,\T} =  a_{\ell}(\T) A_{\T,\T'} + \left(1 -a_\ell(\T) \right)  A_{\T',\T'} 
= \left(1 -a_{\ell,\ell+1}(\T) \right)  A_{\T',\T'}  
= \left(1 + a_{\ell+1,\ell}(\T) \right)\!\!\! \prod_{(i,j) \in \inv(\T')} (1 +  {a_{i,j}(\T')}),
\end{equation*}
where the second equality uses the fact that $A_{\T,\T'} = 0$ since $\T \not\le \T'$, and the third equality uses the inductive hypothesis. 

From Lemma \ref{lemma:inversionswapping}, $\inv(\T) = s_\ell(\inv(\T)) \cup \{(\ell+1, \ell)\}$, and in this pairing, the inversions in $\T$ correspond to inversions in the same position in the tableau $\T'$. Thus, the axial distances (and therefore the values of $a_{i,j}$) between the corresponding boxes are the same, and the result follows:
\begin{equation*}
A_{\T,\T} =  (1 + a_{\ell + 1, \ell}(\T)) \prod_{(i,j) \in \inv(\T')} (1 +  {a_{i,j}(\T')}) =  \prod_{(i,j) \in \inv(\T)} (1 +  {a_{i,j}(\T)}).
\end{equation*}
\end{proof}

\Yboxdim{10pt}
\begin{example} The tableau $\T_{12}$ from Example \ref{ex:diagonal_submatrix} has four inversions: (3,2), (5,2), (5,4), (6,4), and by Proposition  \ref{prop:diagonals}, the matrix entry $A_{\T_{12},\T_{12}}$ is the given product over those inversions,
$$
\begin{array}{rcl}
\begin{array}{l}
\T_{12} = \young(124,36,5)
\end{array}& & 
\renewcommand{\arraystretch}{1.5}
\begin{array}{rcl}
A_{\T_{12},\T_{12}} & = & ( 1 + a_{3,2}(\T_{12}) ) ( 1 + a_{5,2}(\T_{12}) ) ( 1 + a_{5,4}(\T_{12}) ) ( 1 + a_{6,4}(\T_{12}) ) \\
& = & (1 + \frac{1}{2}) (1 + \frac{1}{3}) (1 + \frac{1}{4} ) (1 + \frac{1}{2})  = \frac{3}{2} \cdot \frac{4}{3} \cdot \frac{5}{4} \cdot \frac{3}{2} = \frac{15}{4}.
\end{array}
\end{array}
$$
\end{example}


\begin{example}\label{ex:diagonal_submatrix}
To demonstrate Remark \ref{rem:upper-triangular}(b), if we order the standard tableaux of shape $(3,2,1)$ according to their depth in the weak Bruhat graph as follows:
$$
\Yboxdim{7pt}
{\tiny
\T_1 = \!\! \begin{array}{c} \young(146,25,3)\end{array}\!\!\!\!,\ 
\T_2 = \!\! \begin{array}{c} \young(136,25,4)\end{array}\!\!\!\!,\ 
\T_3 = \!\! \begin{array}{c} \young(145,26,3)\end{array}\!\!\!\!,\ 
\T_4 = \!\! \begin{array}{c} \young(126,35,4)\end{array}\!\!\!\!,\ 
\T_5 = \!\! \begin{array}{c} \young(136,24,5)\end{array}\!\!\!\!,\ 
\T_6 = \!\! \begin{array}{c} \young(135,26,4)\end{array}\!\!\!\!,\
\T_7 = \!\! \begin{array}{c} \young(126,34,5)\end{array}\!\!\!\!,\ 
\T_8 = \!\! \begin{array}{c} \young(125,36,4)\end{array}\!\!\!\!,}\hskip.4in
$$
$$
\Yboxdim{7pt}
\tiny{
\T_9 = \!\! \begin{array}{c} \young(135,24,6)\end{array}\!\!\!\!,\ 
\T_{10} = \!\! \begin{array}{c} \young(134,26,5)\end{array}\!\!\!\!,\ 
\T_{11} = \!\! \begin{array}{c} \young(125,34,6)\end{array}\!\!\!\!,\ 
\T_{12} = \!\! \begin{array}{c} \young(124,36,5)\end{array}\!\!\!\!,\
\T_{13} = \!\! \begin{array}{c} \young(134,25,6)\end{array}\!\!\!\!,\ 
\T_{14} = \!\! \begin{array}{c} \young(124,35,6)\end{array}\!\!\!\!,\ 
\T_{15} = \!\! \begin{array}{c} \young(123,46,5)\end{array}\!\!\!\!,\ 
\T_{16} = \!\! \begin{array}{c} \young(123,45,6)\end{array}\!\!\!\!,
}$$
then with this indexing, $\mathcal{A}_{(3,2,1)}$ has a diagonal submatrix for each distinct row in the weak Bruhat graph $\mathcal{B}_6^{(3,2,1)}$, as shown below.
$$\renewcommand*{\arraystretch}{1.5}
\mathcal{A}_{(3,2,1)} = 
\left(
\begin{array}{c|cc|ccc|cccc|ccc|cc|c}
 1 & \frac{1}{3} & \frac{1}{2} & -\frac{1}{3} & -\frac{1}{3} & \frac{1}{6} & \frac{1}{3} & -\frac{1}{6} & -\frac{1}{6} & -\frac{1}{6} & \frac{1}{6} & \frac{1}{6} & \frac{1}{6} & -\frac{1}{6} & \frac{1}{6} & \frac{1}{12} \\ \hline
 0 & \frac{4}{3} & 0 & \frac{2}{3} & \frac{2}{3} & \frac{2}{3} & \frac{1}{3} & \frac{1}{3} & \frac{1}{3} & \frac{1}{3} & \frac{1}{6} & \frac{1}{6} & \frac{5}{12} & \frac{5}{24} & \frac{1}{6} & -\frac{7}{24} \\
 0 & 0 & \frac{3}{2} & 0 & 0 & \frac{1}{2} & 0 & -\frac{1}{2} & -\frac{1}{2} & \frac{1}{2} & \frac{1}{2} & -\frac{1}{2} & -\frac{1}{2} & \frac{1}{2} & 0 & \frac{1}{4} \\ \hline
 0 & 0 & 0 & 2 & 0 & 0 & 1 & 1 & 0 & 0 & \frac{1}{2} & \frac{1}{2} & 0 & \frac{5}{8} & -\frac{1}{2} & -\frac{5}{8} \\ 
 0 & 0 & 0 & 0 & 2 & 0 & 1 & 0 & \frac{1}{2} & 1 & \frac{1}{4} & \frac{1}{2} & \frac{1}{4} & \frac{1}{8} & -\frac{1}{2} & -\frac{1}{8} \\
 0 & 0 & 0 & 0 & 0 & 2 & 0 & 1 & 1 & \frac{1}{2} & \frac{1}{2} & \frac{1}{4} & \frac{1}{4} & \frac{1}{8} & -\frac{3}{4} & \frac{5}{8} \\ \hline
 0 & 0 & 0 & 0 & 0 & 0 & 3 & 0 & 0 & 0 & \frac{3}{4} & \frac{3}{2} & 0 & \frac{3}{8} & \frac{3}{2} & \frac{3}{8} \\ 
 0 & 0 & 0 & 0 & 0 & 0 & 0 & 3 & 0 & 0 & \frac{3}{2} & \frac{3}{4} & 0 & \frac{3}{8} & -\frac{3}{4} & -\frac{3}{8} \\
 0 & 0 & 0 & 0 & 0 & 0 & 0 & 0 & \frac{5}{2} & 0 & \frac{5}{4} & 0 & \frac{5}{4} & \frac{5}{8} & 0 & -\frac{5}{8} \\
 0 & 0 & 0 & 0 & 0 & 0 & 0 & 0 & 0 & \frac{5}{2} & 0 & \frac{5}{4} & \frac{5}{4} & \frac{5}{8} & \frac{5}{4} & \frac{5}{8} \\ \hline
 0 & 0 & 0 & 0 & 0 & 0 & 0 & 0 & 0 & 0 & \frac{15}{4} & 0 & 0 & \frac{15}{8} & 0 & \frac{15}{8} \\ 
 0 & 0 & 0 & 0 & 0 & 0 & 0 & 0 & 0 & 0 & 0 & \frac{15}{4} & 0 & \frac{15}{8} & \frac{5}{4} & \frac{5}{8} \\
 0 & 0 & 0 & 0 & 0 & 0 & 0 & 0 & 0 & 0 & 0 & 0 & \frac{15}{4} & \frac{15}{8} & 0 & \frac{15}{8} \\ \hline
 0 & 0 & 0 & 0 & 0 & 0 & 0 & 0 & 0 & 0 & 0 & 0 & 0 & \frac{45}{8} & 0 & \frac{15}{8} \\ 
 0 & 0 & 0 & 0 & 0 & 0 & 0 & 0 & 0 & 0 & 0 & 0 & 0 & 0 & 5 & \frac{5}{2} \\ \hline
 0 & 0 & 0 & 0 & 0 & 0 & 0 & 0 & 0 & 0 & 0 & 0 & 0 & 0 & 0 & \frac{15}{2} \\
\end{array}
\right)$$
\end{example}


\section{Affine Hecke Algebras}
\label{sec:Hecke}

In this section, we follow the notations and definitions of \cite{Ram-skew,ram-affine,RR}.  
Fix an element $q \in \CC^\ast$ that is not a root of unity.
The \defn{affine Hecke algebra} $\tilde{\HH}_n$ of type A is the associative algebra with 1 over $\CC$ generated by $\Th_i, 1 \le i \le n-1$, and $X^{\varepsilon_i},  1 \le i \le n$, subject to the  following relations:
\begin{equation*}
\begin{array}{cllccll}
\text{(a)} &  T_i T_j = T_j T_i, &   |i - j| > 1,  && \text{(d)} &    T_i X^{\varepsilon_j} =  X^{\varepsilon_j} T_i , &   |i - j| > 1,  \\
\text{(b)} &   T_i T_{i+1} T_i = T_{i+1} T_i T_{i+1} , &  1 \le i \le n-2, \ && \text{(e)} &   X^{\varepsilon_1} s_{1} X^{\varepsilon_1} s_{1}  = s_1 X^{\varepsilon_1} s_1 X^{\varepsilon_1},   \\
\text{(c)} & \Th_i^2 = (q - q^{-1}) \Th_i + 1,& 1 \le i \le n-1, && \text{(f)} &    X^{\varepsilon_{i+1}}=  T_i X^{\varepsilon_i} T_i , &  1 \le i \le n-1.  \\
\end{array}
\end{equation*}
If $w \in \Sym_n$ with reduced word $w = s_{i_1} s_{i_2} \cdots s_{i_k}$, then we define $\Th_w = \Th_{i_1} \Th_{i_2} \cdots \Th_{i_k} \in \tilde{\HH}_n$.  Since the $\Th_i$ satisfy the braid relations (a) and (b), the definition of $\Th_w$ is independent of the reduced expression for $w$.

The irreducible, calibrated (see \cite{Cherednik,Ram-skew,ram-affine,RR}) $\tilde{\HH}_n$-modules are indexed by \defn{placed skew shapes}. These are created by decomposing a skew shape $\lambda/\mu$ into a disjoint union of skew shapes,
\begin{equation}
\lambda/\mu = \bigsqcup_\beta \left(\lambda^{(\beta)}/\mu^{(\beta)}\right), \qquad \text{ indexed by } \quad \beta \in [0,1) + i [0 , 2 \pi/\ln(q^2)),
\end{equation}
where $\beta$ is called the \emph{page number} of the skew shape $\lambda^{(\beta)}/\mu^{(\beta)}$.  The corresponding \defn{content function} (see \cite{Ram-skew,RR}) is the function $c: \{\text{boxes of $\lambda/\mu$}\}  \to  \RR + i [0 , 2 \pi/\ln(q^2)),$ given by
\begin{equation}\label{def:Affinecontent}
c(b) = \beta + ct(b) = \beta + y - x, \qquad \text{if $b$ is in row $x$ and column $y$ of $\lambda^{(\beta)}/\mu^{(\beta)}$},
\end{equation}
where $ct(b)$ is the content \eqref{def:Sncontent} of the box $b$ used in the seminormal representations of $\Sym_n$, except we now allow each skew shape to be placed arbitrarily, so that the top left box need not lie in position (1,1) (see, in particular,  \cite[(3.5)]{RR}). The pair $(c, \lambda/\mu)$ determines the corresponding placed skew shape $\lambda/\mu = \bigsqcup_\beta \left(\lambda^{(\beta)}/\mu^{(\beta)}\right)$.

For a skew shape $\lambda/\mu$ with $n$ boxes, Ram \cite{Ram-skew} proves that the irreducible, calibrated  $\tilde{\HH}_n$-module corresponding to the placed skew shape $(c, \lambda/\mu)$, is the vector space
$
\tilde{\HH}_n^{(c,\lambda/\mu)} = \CC\text{-span}\left\{ \v_\T \mid \T \in \SYT(\lambda/\mu) \right\},
$
with $\tilde{\HH}_n$-action given by the formulas
\begin{equation} \label{affine-local-action}
X^{\varepsilon_i} \v_\T  = q^{2 c(\T(i))} \v_\T \qquad\hbox{and} \qquad \Th_i \v_\T  = \tilde{a}_i(\T) \v_\T +  \left(q^{-1} + \tilde{a}_i(\T)  \right) \v_{s_i(\T)},
\end{equation}
where $ \v_{s_i(\T)} = 0$ if $s_i(\T)$ is nonstandard, $\T(i)$ is the box of $\T$ containing $i$, and 
\begin{equation}\label{def:q-diagonal}
\tilde{a}_i(\T) = \frac{q-q^{-1}} {1 -  q^{2 (c (\T(i)) - c(\T(i+1))}}.
\end{equation}
This is the \emph{seminormal basis} $\tilde{\mathcal{V}}^{(c,\lambda/\mu)}= \{ \v_\T \mid \T \in \SYT(\lambda / \mu) \}$ of $\tilde{\HH}_n^{(c,\lambda/\mu)}$, and the \emph{natural basis} $\tilde{\mathcal{N}}^{(c,\lambda/\mu)} = \{ \n_\T \mid \T \in \SYT(\lambda/\mu) \}$ is defined (\cite[(5.2)]{Ram-skew})  from the seminormal basis analogously to how it is done for the symmetric group; namely,
\begin{equation}\label{affine-natural-rep}
\n_\T := \Th_{w_\T} \v_\C, \qquad \text{where $\C$ is the column reading tableau and $w_\T \in \Sym_n$ such that $w_\T(\C) = \T$}.
\end{equation}

Analogous to \eqref{Sn:weight}, for a path $\pi = (\T_0 \overset{s_{i_1}}{\longrightarrow} \T_1  \overset{s_{i_2}}{\longrightarrow}  \T_2 \overset{s_{i_3}}{\longrightarrow} \cdots \overset{s_{i_k}}{\longrightarrow} \T_k)$ and a subpath $\omega = ( \T_0 = \S_0  \overset{z_{1}}{\longrightarrow} \S_1  \overset{z_{2}}{\longrightarrow}  \S_2 \overset{z_{3}}{\longrightarrow} \cdots \overset{z_{k}}{\longrightarrow} \S_k)$ of $\pi$ in $\mathcal{B}_n^{\lambda/\mu}$, define 
the \defn{$q$-weight} of $\omega$ as
\begin{equation}\label{def:q-weight}
\tilde\wt_\pi(\omega) = \prod_{j=1}^k \tilde b_j, 
\qquad\text{where}\quad
\tilde b_j = \begin{cases}
\tilde a_{i_j} (\S_{j-1}), & \text{if $\S_{j-1}\overset{e}{\longrightarrow} \S_{j}$,  i.e.,  if $\S_j = \S_{j-1}$}, \\
\tilde a_{i_j} (\S_{j-1})+q^{-1}, & \text{if $\S_{j-1}\overset{s_{i_j}}{\longrightarrow} \S_{j}$, i.e., if $\S_j = s_{i_j}(\S_{i-1})$},
\end{cases}
\end{equation}
or, equivalently, 
$\tilde{\wt}_\pi(\omega) = \prod_{j=1}^k \tilde{a}_{i_j}(\S_{j-1}) +q^{-1} - q^{-1}\delta_{\S_{j-1},\S_j}.$

Since the natural basis is related to the seminormal basis \eqref{affine-natural-rep} in the same way that they are related for the symmetric group, and since the seminormal representation satisfies  \eqref{affine-local-action}, the following change-of-basis theorem for $\tilde{\HH}_n$ is identical to that of $\Sym_n$ except that weights are updated with the values from \eqref{def:q-weight}.

\begin{thm}\label{thm:affine} For a placed skew shape $(c,\lambda/\mu)$ with $n$ boxes, the change-of-basis for $\tilde{\HH}_n^{(c,\lambda/\mu)}$ expressing the natural basis in terms of the seminormal basis satisfies
$$
\n_\T = \sum_{\S \le \T} \tilde{A}_{\S,\T} \, \v_\S \quad\hbox{with coefficients}\quad \tilde{A}_{\S,\T} = \sum_{\omega \subseteq \pi} \tilde{\wt}_{\pi}(\omega),
$$
where the first sum is over standard tableaux $\S \le \T$ in (strong) Bruhat order;
the coefficient $\tilde{A}_{\S,\T}$ is defined using any path $\pi$ from $\C$ to $\T$ in $\B_n^{\lambda/\mu}$; 
and the second sum is over all subpaths $\omega \subseteq \pi$ which terminate at $\S$.
\end{thm}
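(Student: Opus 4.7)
The plan is to replay the proof of Theorem \ref{thm:main} almost verbatim, using the Hecke-algebra analogues: the seminormal action \eqref{affine-local-action} has $q^{-1}$ in place of $1$ and $\tilde{a}_i(\T)$ in place of $a_i(\T)$, and the natural basis \eqref{affine-natural-rep} is built from $\v_\C$ in exactly the same way. Throughout I would fix a \emph{minimal} path $\pi = (\C = \T_0 \overset{s_{i_1}}{\longrightarrow} \cdots \overset{s_{i_k}}{\longrightarrow} \T_k = \T)$ in $\B_n^{\lambda/\mu}$ and induct on its length $k$. The base case $k = 0$ gives $\T = \C$, $\n_\C = \v_\C$, and $\tilde{A}_{\C,\C} = \tilde{\wt}_\emptyset(\emptyset) = 1$, as required.

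For the inductive step, set $\T' = \T_{k-1}$ and $\pi' = (\C \overset{s_{i_1}}{\longrightarrow} \cdots \overset{s_{i_{k-1}}}{\longrightarrow} \T')$. Minimality of $\pi$ guarantees that $s_{i_k} w_{\T'}$ is a reduced factorization of $w_\T$, so $\Th_{w_\T} = \Th_{i_k} \Th_{w_{\T'}}$, and therefore $\n_\T = \Th_{i_k} \n_{\T'}$. Expanding $\n_{\T'}$ via the inductive hypothesis and applying \eqref{affine-local-action} to each $\v_\U$ yields the Hecke analogue of \eqref{RecursiveFormulaI}:
\[
\tilde{A}_{\S,\T} = \begin{cases} \tilde{a}_{i_k}(\S)\, \tilde{A}_{\S,\T'} + \bigl(q^{-1} + \tilde{a}_{i_k}(\S')\bigr)\, \tilde{A}_{\S',\T'}, & \text{if } \S' = s_{i_k}(\S) \in \SYT(\lambda/\mu), \\ \tilde{a}_{i_k}(\S)\, \tilde{A}_{\S,\T'}, & \text{if } s_{i_k}(\S) \text{ is nonstandard}. \end{cases}
\]
The two summands correspond exactly to the two ways of appending an edge to a subpath of $\pi'$: a stationary edge $\S \overset{e}{\longrightarrow} \S$ contributes the factor $\tilde{a}_{i_k}(\S)$, while a real edge $\S' \overset{s_{i_k}}{\longrightarrow} \S$ contributes $q^{-1} + \tilde{a}_{i_k}(\S')$, precisely as prescribed by \eqref{def:q-weight}.

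I would then argue that every subpath $\omega \subseteq \pi$ terminating at $\S$ arises uniquely as such an extension of a subpath $\omega' \subseteq \pi'$ ending at $\S$ or at $\S'$, with $\tilde{\wt}_\pi(\omega) = \tilde{\wt}_{\pi'}(\omega') \cdot \tilde{b}_k$ for the appropriate factor $\tilde{b}_k$ from \eqref{def:q-weight}. The recursion then re-assembles into $\tilde{A}_{\S,\T} = \sum_{\omega \subseteq \pi} \tilde{\wt}_\pi(\omega)$. Upper triangularity under Bruhat order, $\tilde{A}_{\S,\T} = 0$ unless $\S \le \T$, then transfers verbatim from the $\Sym_n$ proof via the subword property: every subpath of a minimal $\pi$ determines a subword of a reduced word for $w_\T$, hence a permutation $w_\S \le w_\T$, hence a tableau $\S \le \T$.

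The only substantive departure from the $\Sym_n$ argument is the need to commit to a minimal path at the outset. In the group case, $\sigma \n_\T = \n_{\sigma(\T)}$ held unconditionally, so any path sufficed; in the Hecke algebra the identity $\Th_{w_\T} = \Th_{i_k} \Th_{w_{\T'}}$ used in the inductive step breaks when $s_{i_k} w_{\T'}$ is non-reduced, since then the quadratic relation $\Th_{i_k}^2 = (q - q^{-1}) \Th_{i_k} + 1$ produces a spurious $(q - q^{-1}) \n_{\T'}$ term. Restricting to minimal paths removes this obstruction, and since $\tilde{A}_{\S,\T}$ is by definition a fixed matrix entry independent of the path, the resulting formula automatically gives the same value for any minimal path from $\C$ to $\T$.
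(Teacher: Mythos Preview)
Your proposal is correct and follows the paper's approach exactly: the paper's proof observes that $T_i$ acts on $\v_\T$ with the same shape as $s_i$ does (replacing $a_i$ by $\tilde a_i$ and $1$ by $q^{-1}$), notes that $T_{w_\T} = T_{j_1}\cdots T_{j_\ell}$ for any reduced word $s_{j_1}\cdots s_{j_\ell}$ for $w_\T$, and then declares the argument identical to that of Theorem~\ref{thm:main}. Your explicit restriction to minimal paths is warranted and in fact sharper than the paper, whose statement says ``any path'' but whose proof (via reduced words) only justifies the subpath formula along minimal paths; as you point out, the quadratic relation for $T_i$ would spoil the inductive step otherwise.
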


\begin{proof} By comparing  \eqref{affine-local-action} with  \eqref{eq:seminormal} we see that $T_i$ acts on $\v_\T$ the same as $s_i$ acts on $\v_\T$ except that the diagonal entry $a_i(\T)$ is replaced by $\tilde a_i(\T)$ and the off-diagonal entry $1 + a_i(\T)$ is replaced by $q^{-1} + \tilde{a}_i(\T)$ .  Moreover, if $w_\T = s_{j_1} s_{j_2} \ldots s_{j_\ell}$ is any reduced expression for the word of $\T$, then $T_{w_\T} = T_{j_1} T_{j_2} \cdots T_{j_\ell}$.  Thus, the vector $\n_\T = T_{w_\T} \v_c$ for the affine Hecke algebra is the same as $\n_\T = w_\T \v_c$ for the symmetric group, except that the $a_{j_i}(\T)$ are replaced by $\tilde a_{j_i}(\T)$ and $1 + a_{j_i}(\T)$ is replaced by $q^{-1} + \tilde{a}_{j_i}(\T)$. The proof is then identical to the proof of Theorem \ref{thm:main}.
\end{proof}

\begin{cor}\label{cor:affinediagonal}
For any\  $\T \in \SYT(\lambda/\mu)$, \
$\displaystyle{
\tilde{A}_{\T,\T} = \!\! \prod_{(i,j) \in \inv(\T)}\! (q^{-1} +  {\tilde{a}_{i,j}(\T)}), 
}$ where
$
\displaystyle{\tilde{a}_{i,j}(\T) = \frac{q-q^{-1}} {1 -  q^{2 (c (\T(i)) - c(\T(j))}}}.
$
\end{cor}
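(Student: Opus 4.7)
The plan is to mirror the proof of Proposition \ref{prop:diagonals} almost verbatim, substituting the Hecke-algebra constants $\tilde{a}_i(\T)$ and $q^{-1}$ for their symmetric-group analogues $a_i(\T)$ and $1$. I will proceed by induction on $\ell(w_\T)$. The base case $\T = \C$ gives $\inv(\T) = \emptyset$ and $\tilde{A}_{\C,\C} = 1 = \tilde{\wt}_\emptyset(\emptyset)$, matching the empty product.

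For the inductive step I first need the Hecke analogue of Corollary \ref{cor:recursive}. This drops out of the proof of Theorem \ref{thm:affine} exactly as in the symmetric-group case: if $\T' = s_\ell(\T)$ is standard, then
\begin{equation*}
\tilde{A}_{\S,\T} =
\begin{cases}
\tilde{a}_\ell(\S)\, \tilde{A}_{\S,\T'} + \bigl(q^{-1} + \tilde{a}_\ell(\S')\bigr)\, \tilde{A}_{\S',\T'}, & \text{if } \S' = s_\ell(\S) \text{ is standard},\\
\tilde{a}_\ell(\S)\, \tilde{A}_{\S,\T'}, & \text{otherwise}.
\end{cases}
\end{equation*}
Choose $\ell$ so that $\T' = s_\ell(\T) \lessdot \T$ and set $\S = \T$, whence $\S' = \T'$. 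Because $\T \not\le \T'$ in Bruhat order, Theorem \ref{thm:affine} gives $\tilde{A}_{\T,\T'} = 0$, so
\begin{equation*}
\tilde{A}_{\T,\T} = \bigl(q^{-1} + \tilde{a}_\ell(\T')\bigr)\, \tilde{A}_{\T',\T'}.
\end{equation*}

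The key geometric observation is that $\tilde{a}_\ell(\T') = \tilde{a}_{\ell+1,\ell}(\T)$. Indeed, the box of $\T'$ containing $\ell$ is the box of $\T$ containing $\ell+1$, and vice versa, so the content difference appearing in the denominator of $\tilde{a}_\ell(\T')$ equals the one appearing in $\tilde{a}_{\ell+1,\ell}(\T)$. More generally, for any inversion $(i,j) \in \inv(\T')$ not equal to $(\ell+1,\ell)$, the pair of boxes occupied by the entries $i,j$ in $\T'$ is the same (up to relabeling $\ell \leftrightarrow \ell+1$) as the pair of boxes occupied by $s_\ell(i), s_\ell(j)$ in $\T$; hence $\tilde{a}_{i,j}(\T') = \tilde{a}_{s_\ell(i),s_\ell(j)}(\T)$.

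Applying the inductive hypothesis to $\T'$, together with Lemma \ref{lemma:inversionswapping} which gives $\inv(\T) = s_\ell(\inv(\T')) \sqcup \{(\ell+1,\ell)\}$, yields
\begin{equation*}
\tilde{A}_{\T,\T} = \bigl(q^{-1} + \tilde{a}_{\ell+1,\ell}(\T)\bigr) \prod_{(i,j) \in \inv(\T')} \bigl(q^{-1} + \tilde{a}_{i,j}(\T')\bigr) = \prod_{(i,j) \in \inv(\T)} \bigl(q^{-1} + \tilde{a}_{i,j}(\T)\bigr),
\end{equation*}
completing the induction. There is no real obstacle here beyond bookkeeping: the only subtlety is checking that the pairing of inversions preserves $\tilde{a}$-values, and this is purely positional since the $\tilde{a}_{i,j}$'s depend only on the contents of the boxes occupied by $i$ and $j$.
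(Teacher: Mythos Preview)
Your proof is correct and is exactly the approach the paper takes: the paper's own argument says only that ``the same inductive proof of Proposition \ref{prop:diagonals} gives the desired result'' once the seminormal weights $a_i$ and $1$ are replaced by $\tilde{a}_i$ and $q^{-1}$, and you have carried out precisely that substitution in detail. Your use of the raw recursion $(q^{-1} + \tilde{a}_\ell(\S'))$ from the Hecke analogue of \eqref{RecursiveFormulaI}, together with the positional identification $\tilde{a}_\ell(\T') = \tilde{a}_{\ell+1,\ell}(\T)$ and Lemma \ref{lemma:inversionswapping}, is the right way to make the induction go through.
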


\begin{proof}
As in the proof of Theorem \ref{thm:affine}, notice that $T_i$ acts on $\v_\T$ the same as $s_i$ acts on $\v_\T$, except that the weights in \eqref{eq:seminormal} are replaced by the weights given in \eqref{affine-local-action}. Thus the same inductive proof of Proposition \ref{prop:diagonals} gives the desired result.
\end{proof}

\begin{rem} \label{rem:q-recursive}
The transition matrix $\mathcal{A}_{(c,\lambda/\mu)}$, defined by Theorem  \ref{thm:affine}, satisfies the same upper triangular properties as in the symmetric group case, discussed in Remark \ref{rem:upper-triangular}. The recursive formula of  Corollary \ref{cor:recursive} also holds for $\tilde{A}_{\S,\T}$ after replacing $a_\ell$ with $\tilde{a}_\ell$ and replacing $1 - a_\ell$ with $q^{-1} - \tilde{a}_\ell$.
\end{rem}

\section{Cyclotomic Hecke Algebras $\HH_{r,n}$ and the Wreath Product Group $\GG_{r,n}$}

\subsection{$r$-tableaux} \label{sec:r-tableaux}

An \defn{$r$-partition} of $n$ is an ordered $r$-tuple $\rpar = (\lambda^{(1)} \ldots, \lambda^{(r)})$ such that each $\lambda^{(i)}$ is a partition and $|\lambda^{(1)}| + \cdots + |\lambda^{(r)}| = n$. 
An \defn{$r$-tableau} of shape $\rpar$ is a filling of the boxes of $\rpar$ with the integers $1, 2, \ldots, n$ such that each integer appears exactly once. An $r$-tableau is {standard} if, for each $1 \le i \le r$, the entires of $\lambda^{(i)}$  increase from left to right in each row and from top to bottom in each column. We denote the set of $r$-tableaux of shape $\rpar$ by $\YT(\rpar)$ and the set of standard $r$-tableaux by $\SYT(\rpar)$. The number of standard $r$-tableaux of shape $\rpar$ is given by $f^{\rpar} = \binom{n}{|\lambda^{(1)}|, \ldots, |\lambda^{(r)}|} f^{\lambda^{(1)}} \cdots f^{\lambda^{(r)}}$, 
where $\binom{n}{|\lambda^{(1)}|, \ldots, |\lambda^{(r)}|}$ is the multinomial coefficient (choose the entries for each $\lambda^{(i)}$) and $f^{\lambda^{(i)}}$ is the number of standard tableaux of shape $\lambda^{(i)}$ discussed in Section \ref{sec:tableaux}.

The column reading tableau $\C$ of shape $\rpar$ is the standard $r$-tableau obtained by entering $1,2,\ldots,n$ consecutively down the columns of $\rpar$, beginning with the leftmost component and filling in the columns from left to right. The {row reading tableau} $\R$ of shape $\rpar$ is the standard $r$-tableau obtained by entering $1,2\ldots,n$ consecutively across the rows of $\rpar$, beginning with the first row of $\lambda^{(r)}$ and filling in the rows from top to bottom, then continuing the process from right to left. If $\T \in \YT(\rpar)$ then the {word of $\T$} is the unique permutation $w_\T \in \Sym_n$ such that $w_\T(\C) = \T$, and an inversion in $\T$ is a pair $(i,j)$ such that $i > j$ and either (1) $i$ and $j$ are in the same component and $i$ is strictly south and strictly west of $j$, or (2) $i$ is in a component left of the component containing $j$. In this way, $(i,j)$ is an inversion in $\T$ if and only if $(i,j)$ is an inversion in $w_\T$.

\begin{example}\label{ex:r-skew}
If $\rpar=( (3,2) , \emptyset, (2), (2,1))$, then
$$
\C=\begin{array}{c}
\begin{tikzpicture}[scale=.37,line width=1.0pt] 
\path (-1,1) node[scale = 2.5] {$($};
\draw (0,0) rectangle (1,1); \path (0.5,0.5) node{$2$};
\draw (0,1) rectangle (1,2); \path (0.5,1.5) node {$1$};
\draw (1,0) rectangle (2,1); \path (1.5,0.5) node {$4$};
\draw (1,1) rectangle (2,2); \path (1.5,1.5) node {$3$};
\draw (2,1) rectangle (3,2); \path (2.5,1.5) node {$5$};
\path (3.5,0) node {,};

\draw (4.5,1) node[scale = 1.25] {$\emptyset$};

\path (5.5,0) node {,};

\draw (6,1) rectangle (7,2); \path (6.5,1.5) node {$6$};
\draw (7,1) rectangle (8,2); \path (7.5,1.5) node {$7$};

\path (8.5,0) node {,};

\draw (9,0) rectangle (10,1); \path (9.5,0.5) node {$9$};
\draw (9,1) rectangle (10,2); \path (9.5,1.5) node {$8$};
\draw (10,1) rectangle (11,2); \path (10.5,1.5) node {$10$};

\path (12,1) node[scale = 2.5] {$)$};
\end{tikzpicture}
\end{array},
\qquad
\R=\begin{array}{l}
\begin{tikzpicture}[scale=.37,line width=1.0pt] 
\path (-1,1) node[scale = 2.5] {$($};
\draw (0,0) rectangle (1,1); \path (0.5,0.5) node{$9$};
\draw (0,1) rectangle (1,2); \path (0.5,1.5) node {$6$};
\draw (1,0) rectangle (2,1); \path (1.5,0.5) node {$10$};
\draw (1,1) rectangle (2,2); \path (1.5,1.5) node {$7$};
\draw (2,1) rectangle (3,2); \path (2.5,1.5) node {$8$};

\path (3.5,0) node {,};

\draw (4.5,1) node[scale = 1.25] {$\emptyset$};

\path (5.5,0) node {,};

\draw (6,1) rectangle (7,2); \path (6.5,1.5) node {$4$};
\draw (7,1) rectangle (8,2); \path (7.5,1.5) node {$5$};

\path (8.5,0) node {,};

\draw (9,0) rectangle (10,1); \path (9.5,0.5) node {$3$};
\draw (9,1) rectangle (10,2); \path (9.5,1.5) node {$1$};
\draw (10,1) rectangle (11,2); \path (10.5,1.5) node {$2$};

\path (12,1) node[scale = 2.5] {$)$};
\end{tikzpicture}
\end{array},
$$
and
$$
\text{ if } \quad
\T=\begin{array}{l}
\begin{tikzpicture}[scale=.37,line width=1.0pt] 
\path (-1,1) node[scale = 2.5] {$($};
\draw (0,0) rectangle (1,1); \path (0.5,0.5) node{$4$};
\draw (0,1) rectangle (1,2); \path (0.5,1.5) node {$3$};
\draw (1,0) rectangle (2,1); \path (1.5,0.5) node {$8$};
\draw (1,1) rectangle (2,2); \path (1.5,1.5) node {$5$};
\draw (2,1) rectangle (3,2); \path (2.5,1.5) node {$7$};

\path (3.5,0) node {,};

\draw (4.5,1) node[scale = 1.5] {$\emptyset$};

\path (5.5,0) node {,};

\draw (6,1) rectangle (7,2); \path (6.5,1.5) node {$1$};
\draw (7,1) rectangle (8,2); \path (7.5,1.5) node {$6$};

\path (8.5,0) node {,};

\draw (9,0) rectangle (10,1); \path (9.5,0.5) node {$10$};
\draw (9,1) rectangle (10,2); \path (9.5,1.5) node {$2$};
\draw (10,1) rectangle (11,2); \path (10.5,1.5) node {$9$};

\path (12,1) node[scale = 2.5] {$)$};
\end{tikzpicture}
\end{array} \\
\hbox{then} \quad w_\T =
\left( 
\arraycolsep=3pt
\begin{array}{ccccccccccc}
1 & 2 & 3 & 4 & 5 & 6 & 7 & 8 & 9 & {10} \\
3 & 4 & 5 & 8 & 7 & 1 & 6 & 2 & {10} & 9
\end{array}
\right).
$$
Furthermore, the inversions in $\T$ are
\begin{align*}
\inv(\T) &= \{ (3,1), (3,2), (4,1), (4,2), (5,1), (5,2), (8,7), (8,1), (8,6), (8,2),  (7,1), (7,6), (7,2), (6,2), (10,9) \}. \\
\end{align*}
\end{example}

Let $\S, \T \in \SYT(\rpar)$. As in the case of standard tableaux of partition shape (Section \ref{subsec:BruhatTab}), we define Bruhat and weak order on $\SYT(\rpar)$ by defining $\S \le \T$ if and only if $w_\S \le w_\T$ and $\S \le_W \T$ if and only if $w_\S \le_W w_\T$.
The weak Bruhat graph $\B_n^{\rpar}$ is the Hasse diagram of weak order on $\SYT(\rpar)$, which corresponds to the interval $[w_\C,w_\R]$ in weak order on $\Sym_n$ under the map $\T \mapsto w_\T$.

\subsection{Cyclotomic Hecke algebras}\label{sec:cyclotomic}

Let $u_1, \ldots, u_r,q \in \CC^\ast$, $q$ not a root of unity. The \defn{cyclotomic Hecke algebra} $\HH_{r,n}=\HH_{r,n}(u_1, \ldots, u_r; q)$ of Ariki and Koike \cite{AK}  is the associative $\CC$-algebra with 1  generated by 
$\Th_i, 0 \le i \le n-1$, subject to the relations
\begin{equation}
\begin{array}{clllccclll}
\text{(a)} &  \Th_i \Th_j = \Th_j \Th_i, &&   |i - j| > 1, \\
\text{(b)} &    \Th_i \Th_{i+1} \Th_i = \Th_{i+1} \Th_i \Th_{i+1} , &&    1 \le i \le n-1, && & \text{(b$'$)} &   \Th_0 \Th_{1} \Th_0 \Th_{1}  = \Th_1 \Th_{0} \Th_1 \Th_{0} ,  \\
\text{(c)} &  \Th_i^2 = (q - q^{-1}) \Th_i + 1, && 1 \le i \le n-1, & && \text{(c$'$)} &    (\Th_0 - u_1)   \cdots  (\Th_0 - u_r) = 0.   
\end{array}
\end{equation}
If $\xi \in \CC$ is a primitive $r$-th root of unity, then upon letting $u_i \to \xi^{i-1}$ and $q \to 1$, the cyclotomic Hecke algebra $\HH_{r,n}$ becomes the group algebra $\CC\GG_{r,n}$, where $\GG_{r,n} = \ZZ_r \wr \Sym_n$ is the wreath product group discussed in Section \ref{sec:wreath}, and $\dim(\HH_{r,n}) = |\GG_{r,n}| = r^n n!$.

Let $u_1, \ldots, u_r, q$ be chosen so that $\HH_{r,n}$ is semisimple. By \cite{Ariki} this requires that $u_i u_j^{-1} \not\in \{1, q^2, \ldots, q^{2n}\}$ for $i \neq j$ and $[n]_q! \not = 0$, where $[n]_{q}! = [n]_q [n-1]_q \cdots [1]_q$ and $[k]_q = (q^k-q^{-1})/(q-q^{-1})$. The irreducible representations of $\HH_{r,n}$ and $\GG_{r,n}$ are indexed by $r$-partitions $\rpar = (\lambda^{(1)}, \ldots, \lambda^{(r)})$ with a total of $n$ boxes.

There is a surjective algebra homomorphism $\tilde \HH_n \to \HH_{r,n}$ from the affine Hecke algebra to the cyclotomic Hecke algebra given by sending $T_i \to T_i$ and $X^{\varepsilon_1} \to T_0$. Ram and Rammage \cite[Thm.~3.18]{RR} use this mapping to show that the irreducible modules for cyclotomic Hecke algebras $\HH_{r,n}$ are inherited from those for the affine Hecke algebra $\tilde{\HH}_n$. The $r$-partition $\rpar=(\lambda^{(1)}, \ldots, \lambda^{(r)})$ corresponds to the placed skew shape $(c,\lambda/\mu)$ which has $r$ connected components equal to $\lambda^{(1)}, \ldots, \lambda^{(r)}$ where $\lambda^{(k)}$ is placed on page  $\beta_k$, satisfying $q^{2\beta_k} = u_k$. Under this identification, for a box  $b \in \rpar$ the content function satisfies
\begin{equation}\label{AK-content}
q^{2c(b)} = u_k q^{2(y-x)}, \qquad \hbox{where $b$ is in position $(x,y)$ of the $k$-th component $\lambda^{(k)}$ of $\rpar$}.
\end{equation}

For an  $r$-partition $\rpar = (\lambda^{(1)}, \ldots, \lambda^{(r)})$, the irreducible $\HH_{r,n}$-module indexed by $\rpar$ is the vector space
$
\HH_{r,n}^{\rpar} = \CC\text{-span}\left\{ \v_\T \mid \T \in \SYT(\rpar) \right\},
$
with $\HH_{r,n}$-action given by the formulas 
\begin{equation}
\begin{aligned}  \label{AK-seminormal}
\Th_0 \v_\T & = u_i \v_\T, \qquad \text{if $1 \in \T^{(i)}$},  \\
\Th_i \v_\T & = \tilde{a}_i(\T) \v_\T +  \left(q^{-1} + \tilde{a}_i(\T)  \right) \v_{s_i(\T)}, 
\end{aligned}
\end{equation}
where $ \v_{s_i(\T)} = 0$ if $s_i(\T)$ is nonstandard, $\T^{(i)}$ is the component of $\T$ containing $i$, and the coefficient $\tilde{a}_i(\T)$ defined in \eqref{def:q-diagonal} simplifies as
\begin{equation}\label{def:cyclotomic-weights}
\tilde{a}_i(\T) = 
\frac{q-q^{-1}} {1 -  q^{2 (c (\T(i)) - c(\T(i+1))}}
= \frac{q-q^{-1}} {1 - \frac{u_{k_i}}{u_{k_{i+1}}} q^{2 (ct(\T(i)) - ct(\T(i+1)))}},
\end{equation}
such that $i$ is in position $(x_i,y_i)$ of $\lambda^{(k_i)}$ with $ct(\T(i)) = y_i-x_i$ and $i+1$ is in position $(x_{i+1},y_{i+1})$ of $\lambda^{(k_{i+1})}$ with $ct(\T(i+1)) = y_{i+1}-x_{i+1}$.

The basis $\tilde{\mathcal{V}}^{\rpar}=\{ \v_\T \mid \T \in \SYT(\rpar)\}$ is the {seminormal basis} of $\HH_{r,n}^{\rpar}$ and the representation in \eqref{AK-seminormal} is due to \cite{AK}. Ram defines the {natural basis} $\tilde{\mathcal{N}}^{\rpar}=\{ \n_\T \mid \T \in \SYT(\rpar) \}$ of  $\HH_{r,n}^{\rpar}$ 
 from the seminormal basis as:
\begin{equation}\label{AK-natural-rep}
\n_\T := \Th_{w_\T} \v_\C, \qquad \text{where $\C$ is the column reading tableau and $w_\T \in \Sym_n$ such that $w_\T(\C) = \T$}.
\end{equation}
The following change-of-basis theorem follows from \eqref{AK-natural-rep} and \eqref{AK-seminormal}.

\begin{thm}\label{thm:AK} For an $r$-partition $\rpar$ with $n$ boxes and the content function $c$ determined in \eqref{AK-content}, the change-of-basis for $\HH_{r,n}^{\rpar}$ expressing the natural basis $\tilde{\mathcal{N}}^{\rpar}$ in terms of the seminormal basis $\tilde{\mathcal{V}}^{\rpar}$ satisfies
$$
\n_\T = \sum_{\S \le \T} \tilde{A}_{\S,\T} \, \v_\S \quad\hbox{with coefficients}\quad \tilde{A}_{\S,\T} = \sum_{\omega \subseteq \pi} \tilde{\wt}_{\pi}(\omega),
$$
where the first sum is over standard tableaux $\S \le \T$ in Bruhat order;
the coefficient $\tilde{A}_{\S,\T}$ is defined using any path $\pi$  from  $\C$ to $\T$ in $\B_n^{\rpar}$;
 the second sum is over all subpaths $\omega \subseteq\pi$ which terminate at $\S$;
and the weight $\tilde{\wt}_\pi(\omega)$ of the subpath $\omega\subseteq \pi$ is  given by \eqref{def:q-weight} but with the coefficients $\tilde a_i(\T)$ as in \eqref{def:cyclotomic-weights}.
\end{thm}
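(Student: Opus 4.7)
The plan is to observe that Theorem \ref{thm:AK} is a direct consequence of Theorem \ref{thm:affine}. The surjective algebra homomorphism $\tilde{\HH}_n \twoheadrightarrow \HH_{r,n}$ from Section \ref{sec:cyclotomic} identifies $\HH_{r,n}^{\rpar}$ with the irreducible calibrated $\tilde{\HH}_n$-module $\tilde{\HH}_n^{(c,\lambda/\mu)}$ for the placed skew shape whose $k$-th component is $\lambda^{(k)}$ placed on page $\beta_k$ determined by $q^{2\beta_k} = u_k$. Under this identification, the content formula \eqref{AK-content} specializes \eqref{def:q-diagonal} to \eqref{def:cyclotomic-weights}, and comparing \eqref{affine-local-action} with \eqref{AK-seminormal} shows that the action of each $\Th_i$ (for $1 \le i \le n-1$) on the seminormal basis is identical in both algebras.

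First I would verify that the natural bases coincide under the identification: both \eqref{affine-natural-rep} and \eqref{AK-natural-rep} set $\n_\T = \Th_{w_\T} \v_\C$ where $w_\T \in \Sym_n$, so $\Th_{w_\T}$ is a product of the generators $\Th_1, \ldots, \Th_{n-1}$, which behave identically in $\HH_{r,n}$ and $\tilde{\HH}_n$, and the base vector $\v_\C$ is the same. Since the transition coefficients $\tilde{A}_{\S,\T}$ and subpath weights $\tilde{\wt}_\pi(\omega)$ in Theorem \ref{thm:affine} depend only on the action of $\Th_1, \ldots, \Th_{n-1}$ on the seminormal basis and on the scalars $\tilde{a}_i(\T)$ built from the content values $c(\T(i))$, applying Theorem \ref{thm:affine} to the placed skew shape $(c, \lambda/\mu)$ yields Theorem \ref{thm:AK} verbatim.

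Alternatively, one could reproduce the inductive argument of Theorem \ref{thm:main} in this setting: induct on the length $k$ of a path $\pi$ from $\C$ to $\T$, starting from the base case $\n_\C = \v_\C$, and use $\n_\T = \Th_{s_{i_k}} \n_{\T'}$ with $\T' = s_{i_k}(\T)$. Expanding via \eqref{AK-seminormal} yields the recursion
$$
\tilde{A}_{\S,\T} = \tilde{a}_{i_k}(\S)\,\tilde{A}_{\S,\T'} + \bigl(q^{-1} + \tilde{a}_{i_k}(\S')\bigr)\tilde{A}_{\S',\T'}
$$
(with the second summand dropped when $s_{i_k}(\S)$ is nonstandard), which is exactly the recursion from the proof of Theorem \ref{thm:main} after replacing $a_i$ by $\tilde{a}_i$ and $1$ by $q^{-1}$. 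This matches subpaths of $\pi$ with the two cases of the recursion: appending $\overset{e}{\longrightarrow}\S$ contributes the factor $\tilde{a}_{i_k}(\S)$, while appending $\overset{s_{i_k}}{\longrightarrow}\S$ contributes $q^{-1} + \tilde{a}_{i_k}(\S')$.

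The main (and essentially only) obstacle is confirming that the Bruhat-triangularity argument still applies, since $\SYT(\rpar)$ consists of $r$-tableaux rather than ordinary tableaux of partition shape. This uses the subword property to conclude that each subpath of a minimal $\pi$ terminates at some $\S \le \T$. Section \ref{sec:r-tableaux} extends Bruhat and weak order on standard tableaux to $r$-tableaux via the bijection $\T \mapsto w_\T$ with an interval in weak order on $\Sym_n$, so the subword property transfers unchanged, and the upper-triangularity of $(\tilde{A}_{\S,\T})$ in Bruhat order on $\SYT(\rpar)$ follows exactly as in the proof of Theorem \ref{thm:main}.
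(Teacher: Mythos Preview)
Your proposal is correct and matches the paper's proof almost exactly: the paper likewise first notes that the result follows from Theorem \ref{thm:affine} via the surjection $\tilde{\HH}_n \to \HH_{r,n}$, and then offers the alternative of rerunning the inductive argument of Theorem \ref{thm:main} with $a_i$ replaced by $\tilde a_i$ and $1$ by $q^{-1}$. Your added remark about why Bruhat-triangularity transfers to $r$-tableaux via $\T \mapsto w_\T$ is a nice clarification that the paper leaves implicit.
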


\begin{proof} The result follows from Theorem \ref{thm:affine} using the surjective algebra homomorphism $\tilde{\HH}_n \to \HH_{r,n}$ of Ram and Rammage \cite[(2.2)]{RR}. Alternatively, by comparing  \eqref{AK-seminormal} with  \eqref{eq:seminormal} we see that $T_i$ acts on $\v_\T$ the same as $s_i$ acts on $\v_\T$ except that the diagonal entry $a_i(\T)$ is replaced by $\tilde a_i(\T)$ and the off-diagonal entry $1 + a_i(\T)$ is replaced by $q^{-1} + \tilde{a}_i(\T)$.  Moreover if $w_\T = s_{j_1} s_{j_2} \ldots s_{j_\ell}$ is any reduced expression for the word of $\T$, then $T_{w_\T} = T_{j_1} T_{j_2} \cdots T_{j_\ell}$.  Thus, the natural basis vector $\n_\T = T_{w_\T} \v_c$ for the cyclotomic Hecke algebra is the same as $\n_\T = w_\T \v_c$, for the symmetric group, except that the weights $a_{j_i}(\T)$ are replaced by those $\tilde a_{j_i}(\T)$ in \ref{def:cyclotomic-weights}
 and $1 + a_{j_i}(\T)$ is replaced by $q^{-1} + \tilde{a}_{j_i}(\T)$. The proof is then identical to the proof of Theorem \ref{thm:main}.
\end{proof}

\subsection{Iwahori-Hecke algebras of type A and B}

The \defn{Iwahori-Hecke algebra} $\HH_n(q) = \HH_{1,n}$ of type A is the special case of cyclotomic Hecke algebra when $r = 1$, $u_1 = 1$, and $T_0 = 1$. Its irreducible modules $\HH_n^\lambda$ are labeled by partitions $\lambda \vdash n$, and Theorem \ref{thm:AK} gives the change-of-basis between the seminormal and natural bases.
In this case, with only one component to $\lambda$, the seminormal action reduces to the one defined in \eqref{AK-seminormal} with the weight $\tilde a_i(\T)$ given by
\begin{equation}
\tilde{a}_i(\T) = 
\frac{q-q^{-1}} {1 -  q^{2 (ct (\T(i)) - ct(\T(i+1))}}.
\end{equation}
This is precisely the seminormal action of Hoefsmit \cite{Hoefsmit}, and the corresponding natural representation, defined using \eqref{AK-natural-rep}, was given by Ram \cite{Ram-skew}. In the $q \to 1$ limit, the coefficients $\tilde{a}_i(\T)$ become the coefficients of the symmetric group; that is, $\lim_{q \to 1} \tilde{a}_i(\T)  = a_i(\T)$ defined in \eqref{def:symmetric-diagonal}. Thus, if $\mathcal{A}_\lambda(q)$ is the transition matrix between the natural and seminormal bases of $\HH_n^\lambda$, then $\lim_{q \to 1} \mathcal{A}_\lambda(q) = \mathcal{A}_\lambda$, the transition matrix for the symmetric group module $\Sym_n^\lambda$, as seen in Example \ref{eg:Hecke2Symmetric}.

\begin{example} \label{eg:Hecke2Symmetric} If $\lambda = (3,2)$ then the transition matrix $\mathcal{A}_\lambda(q) = (A_{\S,\T})_{\S,\T \in \SYT(\lambda)}$ between the natural and seminormal bases of $\HH_n^\lambda$ is given by
$$
\Yboxdim{7pt}
\mathcal{A}_{\lambda}(q) =
\begin{blockarray}{cccccc}
\phantom{\tiny\young(1,1,1)} & \n_{\tiny\young(135,24)}& \n_{\tiny\young(125,34)}&\n_{\tiny\young(134,25)}&\n_{\tiny\young(124,35)}&\n_{\tiny\young(123,45)} \\
\begin{block}{c(ccccc)}
\v_{\tiny\young(135,24)} & 1 & \frac{q^3}{q^2+1} & \frac{q^3}{q^2+1}  & \frac{q^6}{(q^2+1)^2} & -\frac{q^5}{(q^2+1)^2} \\
\v_{\tiny\young(125,34)} & 0 & \frac{q^3 + q + q^{-1}}{q^2 + 1} & 0 & \frac{q^6 + q^4 + q^2}{(q^2 + 1)^2} & \frac{q^7 + q^5 + q^3}{(q^2+1)^2} \\
\v_{\tiny\young(134,25)} & 0 & 0 & \frac{q^3 + q + q^{-1}}{q^2 + 1} & \frac{q^6 + q^4 + q^2}{(q^2 + 1)^2} & \frac{q^7 + q^5 + q^3}{(q^2+1)^2} \\
\v_{\tiny\young(124,35)} & 0 & 0 & 0 & \left(\frac{q^3 + q + q^{-1}}{q^2 + 1}\right)^2 & \frac{q^5}{q^2 + q + 1} \cdot \left(\frac{q^3 + q + q^{-1}}{q^2 + 1}\right)^2 \\
\v_{\tiny\young(123,45)} & 0 & 0 & 0 & 0 & \frac{q^5 + q + 1 + q^{-1}}{q^2 + q + 1} \cdot \left(\frac{q^3 + q + q^{-1}}{q^2 + 1}\right)^2 \\
\end{block}
\end{blockarray}\ ,
$$
and one can verify that $\lim_{q \to 1} \mathcal{A}_\lambda(q) = \mathcal{A}_\lambda$, which is shown in Example \ref{chgA32}. 
\end{example}

The {Iwahori-Hecke algebra} $\HH \mathbb{B}_n(q) = \HH_{2,n}$ of type B is the special case of cyclotomic Hecke algebra with $r = 2$ and $u_1 = u_2^{-1}$.  Its irreducible modules are labeled by pairs of partitions $\rpar = (\lambda^{(1)}, \lambda^{(2)})$ with a total of $n$ boxes, and Theorem  \ref{thm:AK} gives the change basis between the seminormal and natural bases.  As in type A, the seminormal and natural bases of $\HH\mathbb{B}_n^{\rpar}$ were defined by Hoefsmit \cite{Hoefsmit} and Ram \cite{Ram-skew}, respectively.

\subsection{The complex reflection group $\GG_{r,n}$}
\label{sec:wreath}

The group  $\GG_{r,n} = \ZZ_r \wr \Sym_n$ is the wreath product of the symmetric group $\Sym_n$ and the finite cyclic group $\ZZ_r$ of order $r$.  It has order $r^n n!$ and is generated by
$s_i, 0 \le i \le n-1$, subject to the relations
\begin{equation}
\begin{array}{clllccclll}
\text{(a)} &  s_i s_j = s_j s_i, &&   |i - j| > 1, \\
\text{(b)} &   s_i s_{i+1} s_i = s_{i+1} s_i s_{i+1} , &&  1 \le i \le n-2, &&& \text{(b$'$)} &   s_0 s_{1} s_0 s_{1}  = s_1 s_{0} s_1 s_{0},   \\
\text{(c)} & s_i^2 = 1, && 1 \le i \le n-1, &&& \text{(c$'$)} &   s_0^r = 1.  \\
\end{array}
\end{equation}
This group is denoted by  $\GG_{r,n} = \GG_{r,1,n}$ in the Shepard and Todd classification of complex reflection groups, and it contains the symmetric group $\Sym_n \subseteq \GG_{r,n}$ as the subgroup generated by $s_1, \ldots, s_{n-1}$. Upon letting $q \to 1$ and identifying $T_i$ with $s_i$ for $0 \le i \le n-1$, the cyclotomic Hecke algebra $\HH_{r,n}$ of Section \ref{sec:cyclotomic} specializes to the group algebra $\CC\GG_{r,n}.$

For an  $r$-partition $\rpar = (\lambda^{(1)}, \ldots, \lambda^{(r)})$, define the irreducible $\GG_{r,n}$-module indexed by $\rpar$ as the vector space
$
\GG_{r,n}^{\rpar} = \CC\text{-span}\left\{ \v_\T \mid \T \in \SYT(\rpar) \right\},
$
with $\GG_{r,n}$-action given by the formulas 
\begin{equation}\label{seminormal_action_g_rn}
\begin{array}{lcl}
s_0 \v_\T  = \xi^{i-1} \v_\T, &\quad& \text{if $1 \in \T^{(i)}$},   \\
s_i \v_\T  = \v_{s_i(\T)}, &\phantom{\Big\vert}& \text{if $i,i+1$ are in different components of $\T$}, \\
s_i \v_\T  = {a}_i(\T) \v_\T +  \left(1 + {a}_i(\T)  \right) \v_{s_i(\T)}, && \text{if $i,i+1$ are in the same component of $\T$}, 
\end{array}
\end{equation}
where $ \v_{s_i(\T)} = 0$ if $s_i(\T)$ is nonstandard, and ${a}_i(\T)$ is the same coefficient as defined for the symmetric group in \eqref{def:symmetric-diagonal}.
This representation is obtained from \eqref{AK-seminormal} by letting $q \to 1$,  $u_i \to \xi^{i-1}$, where $\xi$ is a complex $r$th root of unity, and identifying $T_i$ with $s_i$ for $0 \le i \le n-1$.  The basis $\calV^{\rpar} = \{ \v_\T \mid \T \in \SYT(\rpar)\}$ is the seminormal basis of $\GG_{r,n}^{\rpar}$.

The natural basis $\calN^{\rpar} = \{ \n_\T \mid \T \in \SYT(\rpar) \}$ of $\GG_{r,n}^\rpar$ is defined from the seminormal basis by
\begin{equation}\label{wreath-natural-rep}
\n_\T := w_\T \v_\C, \qquad\text{where $w_\T \in \Sym_n$ such that $w_\T(\C) =\T$ (i.e., $w_\T$ is the word of $\T$)},
\end{equation}
and $\C$ is the column reading tableau of shape $\rpar$.  The change-of-basis coefficients from the natural basis to the seminormal basis are given by sending $q  \to 1$ and $u_i \to \xi^{i-1}$ in Theorem \ref{thm:AK}.

Upon specializing to the group $\GG_{r,n}$, the transition matrix takes an especially nice form that can be understood in terms of the alphabet of entries in each component of the underlying tableaux.
Let $\rpar = (\lambda^{(1)}, \ldots, \lambda^{(r)})$ be an $r$-partition with $n$ boxes, and let $n_i = |\lambda^{(i)}|$.
A \defn{$\rpar$-alphabet} is an ordered set partition of $\{1, \ldots, n\}$ of the form
$$
\mathfrak{A} = \left\{ a_1^{(1)}, \ldots, a_{n_1}^{(1)} \mid  a_1^{(2)}, \ldots, a_{n_2}^{(2)} \mid \cdots \mid a_1^{(r)},\ldots, a_{n_r}^{(r)}\right \},
\quad\hbox{where  $a_1^{(i)} <  a_2^{(i)}  < \cdots < a_{n_i}^{(i)}$ for each $1 \le i \le r$}.
$$
The corresponding permutation (in two-line notation),
$$
\beta = 
\left(
\begin{array}{ccc|ccc|c|ccc}
1 &\cdots & n_1 & n_1 + 1 & \cdots & n_1 + n_2 & \cdots & n - n_r + 1 & \cdots & n \\
a_1^{(1)} & \cdots & a_{n_1}^{(1)} & a_1^{(2)} & \cdots & a_{n_2}^{(2)} & \cdots & a_1^{(r)} & \cdots & a_{n_r}^{(r)}  \\
\end{array}
\right) \in \Sym_n,
$$
is the \defn{alphabetizer} corresponding to $\mathfrak{A}$. The  \defn{standard alphabet} of $\rpar$ is 
$$
\mathfrak{A}_0 = \left\{ 1, \ldots, n_1 \mid n_1 + 1, \ldots, n_1 + n_2 \mid \cdots \mid n_1+\cdots + n_{r-1}+1, \ldots, n \right\},
$$
whose alphabetizer is the identity permutation.  If $\T = (\T^{(1)}, \ldots, \T^{(r)}) \in \SYT(\rpar)$, then the $\rpar$-alphabet of $\T$ is  given by partitioning $\{1, \ldots, n\}$ according to the entries of each $\T^{(i)}$. For instance,  if $\T$ is the standard $4$-tableau in Example \ref{ex:r-skew} then
$$
\mathfrak{A} = \left\{ 3,4,5,7,8 \mid   \mid  1,6\mid 2,9,10 \right\}
\qquad \hbox{and}\qquad
\beta = \left(\begin{array} {ccccc||cc|ccc}
1&2&3&4&5&6&7&8&9&10\\
3&4&5&7&8 &1&6 &2&9&10
\end{array}\right).
$$
The alphabetizer is a minimal-length representative of the left coset containing $\beta$ corresponding to the Young subgroup $\Sym_{n_1} \times \Sym_{n_2} \times \cdots \times \Sym_{n_r} \subseteq \Sym_n$.

\begin{lemma}\label{lemma:alphabetizer}
If $\rpar$ is an $r$-partition of $n$, $\T \in \SYT(\rpar)$ has the standard alphabet, and $\beta \in \Sym_n$ is an alphabetizer, then $\beta \v_\T = \v_{\beta(\T)}$.
\end{lemma}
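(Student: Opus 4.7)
The plan is to proceed by induction on $\ell(\beta)$, with the base case $\beta=e$ being trivial. For the inductive step with $\ell(\beta)\ge 1$, I would choose any $j\in\{1,\ldots,n-1\}$ such that $\ell(s_j\beta)<\ell(\beta)$, equivalently such that in the one-line notation for $\beta$ the value $j+1$ appears at a position $p_{j+1}$ strictly less than the position $p_j$ of the value $j$. Set $\beta':=s_j\beta$, so that $\beta=s_j\beta'$ is a length-additive factorization with $\ell(\beta')=\ell(\beta)-1$.

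The crux of the argument is a two-part sub-claim: (a) $\beta'$ is again a minimal-length left coset representative of $\Sym_{n_1}\times\cdots\times\Sym_{n_r}$, i.e.\ an alphabetizer; and (b) the positions $p_j,p_{j+1}$ lie in different blocks of the standard partition $\{1,\ldots,n_1\mid n_1+1,\ldots,n_1+n_2\mid\cdots\}$ of $\{1,\ldots,n\}$. Part (b) follows immediately from the alphabetizer characterization that $\beta$ is strictly increasing on each block: if $p_j$ and $p_{j+1}$ were in the same block, then $p_{j+1}<p_j$ would force $\beta(p_{j+1})<\beta(p_j)$, that is $j+1<j$. Part (a) is then a direct verification; swapping the values $j$ and $j+1$ between two distinct blocks preserves the ``increasing within each block'' property, because the block containing $p_j$ does not already contain the value $j+1$ (it sits in the other block), so replacing $j$ by $j+1$ there keeps the sequence increasing, and symmetrically for the other block.

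With the sub-claim in hand, the induction closes as follows. By the inductive hypothesis applied to $\beta'$, we have $\beta'\v_\T=\v_{\beta'(\T)}$, and hence $\beta\v_\T = s_j\v_{\beta'(\T)}$. Because $\T$ has the standard alphabet, the component of $\beta'(\T)$ containing the entry $\beta'(i)$ is exactly the component indexed by the standard block containing the position $i$. Therefore $j=\beta'(p_{j+1})$ lies in the component indexed by the block of $p_{j+1}$, while $j+1=\beta'(p_j)$ lies in the component indexed by the block of $p_j$; by part (b) these components are distinct. The first case of the seminormal rule \eqref{seminormal_action_g_rn} then gives $s_j\v_{\beta'(\T)}=\v_{s_j\beta'(\T)}=\v_{\beta(\T)}$, completing the induction. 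The only nontrivial ingredient is the sub-claim, and both halves reduce to bookkeeping with the defining property of alphabetizers; I do not anticipate any deeper obstacle.
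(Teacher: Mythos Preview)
Your proposal is correct and follows essentially the same inductive strategy as the paper's proof: reduce $\beta$ by a left descent $s_j$, check that $\beta'=s_j\beta$ is again an alphabetizer, and use that $j$ and $j+1$ lie in different components of $\beta'(\T)$ to invoke the ``different components'' case of \eqref{seminormal_action_g_rn}. The only cosmetic difference is that you take an \emph{arbitrary} left descent and prove directly from the increasing-on-blocks property that $p_j,p_{j+1}$ must lie in different blocks, whereas the paper phrases this as the existence of an inversion $(i+1,i)$ with $i+1$ in a component to the left of $i$; your formulation is slightly cleaner but the content is the same.
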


\begin{proof}
We proceed by induction on the length of $\beta$. If $\ell(\beta) = 0$,  then $\beta$ is the identity and the result is immediate. If $\ell(\beta) \ge 1$, then $\inv(\beta)$ is nonempty, so pick an inversion $(i+1,i)$ such that $i+1$ lies in a component to the left of the component containing $i$. Such an inversion must exist, for if the only inversions in $\beta(\T)$ occur within a single component, then $\beta(\T)$ has the standard alphabet, which is impossible since $\beta \neq e$ is an alphabetizer and the alphabetizer of the standard alphabet is the identity.  It  follows  that $\ell(s_i\beta) < \ell(\beta)$ since $s_i\beta(\T)$ has fewer inversions than $\beta(\T)$. By the Exchange Condition (see, for example, \cite[(1.7)]{humphreys1990reflection}) $\beta = s_i \beta'$ for some $\beta' \in \Sym_n$ with $\ell(\beta') < \ell(\beta)$. Furthermore, we claim that $\beta'$ is also an alphabetizer. We have
$$
\beta = 
\left(
\begin{array}{c|ccccc|c|ccccc|c}
\cdots & \cdots & p-1 & p & p+1 & \cdots & \cdots & \cdots & q-1 & q & q+1 & \cdots & \cdots \\
\cdots & \cdots & a_{c-1}^{(x)} & i+1 & a_{c+1}^{(x)} & \cdots & \cdots & \cdots & a_{d-1}^{(y)} & i & a_{d+1}^{(y)} & \cdots & \cdots 
\end{array}
\right)
$$
and
$$
\beta' = 
\left(
\begin{array}{c|ccccc|c|ccccc|c}
\cdots & \cdots & p-1 & p & p+1 & \cdots & \cdots & \cdots & q-1 & q & q+1 & \cdots & \cdots \\
\cdots & \cdots & a_{c-1}^{(x)} & i & a_{c+1}^{(x)} & \cdots & \cdots & \cdots & a_{d-1}^{(y)} & i+1 & a_{d+1}^{(y)} & \cdots & \cdots 
\end{array}
\right).
$$
Since $\beta$ is an alphabetizer we have $a_{c-1}^{(x)} < i+1 < a_{c+1}^{(x)}$ and $a_{d-1}^{(y)} < i < a_{d+1}^{(y)}$. But as there are no numbers between $i$ and $i+1$ we  have
$a_{c-1}^{(x)} < i < a_{c+1}^{(x)}$ and $a_{d-1}^{(y)} < i+1 < a_{d+1}^{(y)}$. Thus $\beta'$ is also an alphabetizer since all other entries of $\beta$ and $\beta'$ agree.
As $\ell(\beta') < \ell(\beta)$, we have by induction $\beta' \v_\T = \v_{\beta'(\T)}$, and therefore
$$
\beta \v_\T = s_i \beta' \v_\T = s_i \v_{\beta'(\T)} = \v_{s_i\beta'(\T)} = \v_{\beta(\T)},
$$
where the second-to-last equality holds by \eqref{seminormal_action_g_rn} since $i$ and $i+1$ are in different components of $\beta'(\T)$.
\end{proof}

\newpage

\begin{thm}\label{thm:tensors}
For an $r$-partition $\rpar = (\lambda^{(1)},\ldots,\lambda^{(r)})$ with $n$ boxes, the change-of-basis for $\GG_{r,n}^{\rpar}$ expressing the natural basis $\calN^\rpar$ in terms of the seminormal basis $\calV^\rpar$ satisfies
$$
\n_\T = \sum_{\S \le \T} \underline{A}_{\S,\T} \v_\S \quad \text{with coefficients} \quad \underline{A}_{\S,\T} = \prod_{i=1}^r A_{\S^{(i)},\T^{(i)}}
$$
where the first sum is over standard tableaux $\S \le \T$ in Bruhat order which have the same alphabet as $\T$ and  $A_{\S^{(i)},\T^{(i)}}$ is the coefficient from the symmetric group $\Sym_n$ defined in Theorem \ref{thm:main}. 

\end{thm}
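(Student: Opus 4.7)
The plan is to reduce the wreath-product change-of-basis to $r$ independent copies of the symmetric-group result Theorem \ref{thm:main}. Given $\T \in \SYT(\rpar)$ with alphabet $\mathfrak{A}$ and alphabetizer $\beta \in \Sym_n$, set $\T_0 := \beta^{-1}(\T)$, the tableau occupying the same positions as $\T$ but carrying the standard alphabet. Since both $\C$ and $\T_0$ have the standard alphabet, $w_{\T_0}$ lies in the Young subgroup $W_J := \Sym_{n_1}\times\cdots\times\Sym_{n_r}$ and factors as $w_{\T_0} = \sigma_1\sigma_2\cdots\sigma_r$ where $\sigma_i$ permutes only the $i$-th block and, after the obvious relabeling of that block to $\{1,\ldots,n_i\}$, corresponds to the symmetric-group word $w_{\T_0^{(i)}} \in \Sym_{n_i}$. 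From $w_\T(\C)=\T=\beta(\T_0)=\beta w_{\T_0}(\C)$ we read off $w_\T = \beta w_{\T_0}$, and therefore $\n_\T = \beta\,\n_{\T_0}$ with $\n_{\T_0} = w_{\T_0}\v_\C$.

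Next I will compute $\n_{\T_0}$ by applying the $\sigma_i$'s in turn to $\v_\C$. Any simple transposition $s_j$ appearing in a reduced word for some $\sigma_i$ has $j,j+1$ in the $i$-th block, so throughout the computation $j$ and $j+1$ stay in the same component, and the third case of \eqref{seminormal_action_g_rn} applies --- with axial coefficients $a_j(\cdot)$ identical to the symmetric-group values used in \eqref{eq:seminormal}. Since the $\sigma_i$'s act on disjoint blocks of seminormal coordinates they decouple completely, and invoking Theorem \ref{thm:main} separately on each component (the block-relabeling preserves all axial distances) yields
$$
\n_{\T_0} \;=\; \sum_{\S_0} \Bigl(\prod_{i=1}^r A_{\S_0^{(i)},\,\T_0^{(i)}}\Bigr)\,\v_{\S_0},
$$
summed over $\S_0 \in \SYT(\rpar)$ having the standard alphabet and satisfying $\S_0^{(i)} \le \T_0^{(i)}$ for every $i$.

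Finally I will push this expansion forward by $\beta$. Since every $\S_0$ in the sum has the standard alphabet, Lemma \ref{lemma:alphabetizer} gives $\beta \v_{\S_0} = \v_{\beta(\S_0)}$, and the substitution $\S := \beta(\S_0)$ bijects the index set onto the standard tableaux sharing the alphabet $\mathfrak{A}$ of $\T$. Because $\S^{(i)}$ and $\S_0^{(i)}$ have the same shape and the same relative ordering of entries, $A_{\S_0^{(i)},\T_0^{(i)}} = A_{\S^{(i)},\T^{(i)}}$, yielding the claimed product formula $\underline{A}_{\S,\T} = \prod_i A_{\S^{(i)},\T^{(i)}}$. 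To reconcile the Bruhat condition $\S \le \T$ appearing in the index set, I will use that $\beta$ is a minimum-length representative of its left coset modulo $W_J$, so left-multiplication by $\beta$ is a length-additive poset isomorphism from $(W_J,\le)$ onto $(\beta W_J,\le)$, and the block factorization of $W_J$ makes Bruhat order on $W_J$ the product of Bruhat orders on the $\Sym_{n_i}$. I expect this last step --- carefully aligning the ``same alphabet'' constraint with the global Bruhat inequality through the parabolic factorization --- to be the main obstacle; the rest of the argument then follows mechanically from the compatibility of \eqref{seminormal_action_g_rn} with \eqref{eq:seminormal} on the Young subgroup.
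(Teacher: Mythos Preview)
Your proof is correct and follows essentially the same route as the paper's: reduce to the standard alphabet, apply Theorem~\ref{thm:main} component-by-component using that the seminormal action \eqref{seminormal_action_g_rn} agrees with \eqref{eq:seminormal} inside each block, and push forward by the alphabetizer via Lemma~\ref{lemma:alphabetizer}. Your factorization $w_{\T_0}=\sigma_1\cdots\sigma_r$ in the Young subgroup and your care with the Bruhat step are slightly more explicit than the paper's treatment, but the architecture is identical.

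One remark on the step you flag as the ``main obstacle'': you only need the easy direction. Since $A_{\S^{(i)},\T^{(i)}}=0$ whenever $\S^{(i)}\not\le\T^{(i)}$, it suffices to show that $\S^{(i)}\le\T^{(i)}$ for all $i$ implies $\S\le\T$, and this is immediate from length-additivity $\ell(\beta w)=\ell(\beta)+\ell(w)$ for $w\in W_J$ together with the subword characterization of Bruhat order. No deeper parabolic machinery is required.
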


\begin{proof}
Let $\T \in \SYT(\underline{\lambda})$ have the standard alphabet and fix a component $\lambda^{(i)}$ of $\underline{\lambda}$. The entries in $\T^{(i)}$ are $\mathfrak{A}^{(i)} = \{ m_i + 1, m_i + 2, \ldots, m_i + n_i \}$, where $n_i = |\lambda^{(i)}|$ and $m_i = n_1 + \cdots + n_{i-1}$. We have a natural bijection $\mathfrak{A}^{(i)} \to \{ 1, \ldots, n_i \}$ given by $m_i+ j \mapsto j$, and  it follows from Theorem \ref{thm:main} that
$$
\n_{(\C^{(1)}, \ldots, \C^{(i-1)}, \T^{(i)}, \C^{(i+1)}, \ldots, \C^{(r)})} = \sum_{\S^{(i)} \le \T^{(i)}} A_{\S^{(i)},\T^{(i)}} \v_{(\C^{(1)}, \ldots, \C^{(i-1)},\S^{(i)}, \C^{(i+1)},\ldots,\C^{(r)})}.
$$
where the sum is over all $\S^{(i)} \le \T^{(i)}$ in Bruhat order which have the alphabet $\mathfrak{A}^{(i)}$ and $\C = (\C^{(1)}, \ldots, \C^{(r)})$ is the column reading tableau of shape $\underline{\lambda}$. By applying the same argument in each component, we obtain
$$
\n_{(\T^{(1)}, \ldots, \T^{(r)})} = \sum_{\S \le \T} \left( \prod_{i=1}^r A_{\S^{(i)},\T^{(i)}} \right) \v_{(\S^{(1)}, \ldots,,\S^{(r)})},
$$
where by construction each $\S \le \T$ has the same alphabet as $\T$.

Now let  $\T \in \SYT(\rpar)$ have partitioned alphabet $\mathfrak{A}$ (not necessarily standard), and let $\beta \in \Sym_n$ be its alphabetizer. Then $\T':=\beta^{-1}(\T)$ is a standard tableau with the standard alphabet and
$$
\n_{\T'} = \sum_{\S' \le \T'} \underline{A}_{\S',\T'} \v_{\S'} \quad \text{with coefficients} \quad  \underline{A}_{\S',\T'} = \prod_{i=1}^r A_{\S'^{(i)},\T'^{(i)}},
$$
where the sum is over $\S' \le \T'$ with the standard alphabet. By Lemma \ref{lemma:alphabetizer} we have $\beta \v_{\S'} = \v_{\beta(\S')}$, so
\begin{equation*}
\beta \n_{\T'} = \sum_{\S' \le \T'}  \underline{A}_{\S',\T'} (\beta \v_{\S'}) 
 \sum_{\S' \le \T'}  \underline{A}_{\S',\T'} \v_{\beta(\S')} = n_{\beta(\T')}.
\end{equation*}
Let $\S = \beta(\S')$ for all $\S' \le \T'$ with the standard alphabet. Then $\S \le \T$ if and only if $\S' \le \T'$ since $w_\S = \beta w_{\S'}$ and $w_\T = \beta w_{\T'}$, so we obtain
$
\n_\T = \sum_{\S \le \T} \underline{A}_{\S',\T'} \v_\S,
$
where the sum is over all $\S \le \T$ with partitioned alphabet $\mathfrak{A}$.
\end{proof}

\begin{cor}\label{cor:decomp}
For an $r$-partition $\rpar = (\lambda^{(1)},\ldots,\lambda^{(r)})$ of $n$ with $n_i = |\lambda^{(i)}|$, the transition matrix $\calA_{\rpar} = (A_{\S,\T})_{\S,\T \in \SYT(\rpar)}$ is of the form
$$
\calA_{\rpar} = \left( \calA_{\lambda^{(1)}} \otimes \cdots \otimes \calA_{\lambda^{(r)}} \right)^{\oplus \binom{n}{n_1, \ldots, n_r}}.
$$
That is, $\calA_{\rpar}$ is the direct sum of multinomial $\binom{n}{n_1, \ldots, n_r}$ identical copies (one for each choice of alphabet) of the tensor product $\calA_{\lambda^{(1)}} \otimes \cdots \otimes \calA_{\lambda^{(r)}}$ such that each $\calA_{\lambda^{(i)}}$ is the symmetric group $\Sym_{n_i}$ transition matrix as described  in Theorem \ref{thm:main}. In particular, the matrix blocks are independent of the choice of alphabet.\end{cor}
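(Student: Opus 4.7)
The plan is to obtain the corollary as an essentially immediate consequence of Theorem \ref{thm:tensors}, with the main work being bookkeeping: identifying the block structure on $\calA_\rpar$, matching each block with a tensor product, and verifying all blocks are equal.

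First, I would partition the index set $\SYT(\rpar)$ into classes according to the $\rpar$-alphabet of each tableau. Since the component sizes are fixed at $n_1,\ldots,n_r$, there are exactly $\binom{n}{n_1,\ldots,n_r}$ possible $\rpar$-alphabets (one for each ordered set partition of $\{1,\ldots,n\}$ into blocks of those sizes), and each class contains exactly $\prod_{i=1}^r f^{\lambda^{(i)}}$ tableaux. Choose any linear order on alphabets, and within each alphabet class choose the lexicographic ordering induced by any fixed ordering on $\SYT(\lambda^{(i)})$ for each $i$ (one that refines Bruhat order). Use this combined order on rows and columns of $\calA_\rpar$.

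With this ordering in place, Theorem \ref{thm:tensors} immediately yields two things. The restriction of the sum to tableaux with the same alphabet as $\T$ means $\underline{A}_{\S,\T}=0$ whenever $\S$ and $\T$ belong to different alphabet classes, so $\calA_\rpar$ is block-diagonal with one block per alphabet. The product formula $\underline{A}_{\S,\T}=\prod_{i=1}^r A_{\S^{(i)},\T^{(i)}}$ identifies each such block, under the bijection from alphabet-$\mathfrak{A}$ tableaux to $\SYT(\lambda^{(1)})\times\cdots\times\SYT(\lambda^{(r)})$ obtained by relabeling $a_j^{(i)}\mapsto j$, with the $(\S,\T)$ entry of the Kronecker product $\calA_{\lambda^{(1)}}\otimes\cdots\otimes\calA_{\lambda^{(r)}}$ (under the standard lexicographic indexing of the tensor product basis).

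Finally, to establish that all $\binom{n}{n_1,\ldots,n_r}$ blocks are literally \emph{the same} matrix and not merely isomorphic, I would observe that the symmetric group transition matrix entries $A_{\S^{(i)},\T^{(i)}}$ depend only on the shape $\lambda^{(i)}$ and on the relative order of the entries of $\T^{(i)}$, not on the particular alphabet, because the axial distances \eqref{def:symmetric-diagonal}, the Bruhat relation, and the path-weight construction of Theorem \ref{thm:main} are invariant under the order-preserving relabeling $a_j^{(i)}\mapsto j$. Hence after relabeling, each block evaluates to the same matrix $\calA_{\lambda^{(1)}}\otimes\cdots\otimes\calA_{\lambda^{(r)}}$, giving the direct-sum description. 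The main (mild) obstacle is purely notational: one must pick compatible orderings on alphabet classes and on each $\SYT(\lambda^{(i)})$ so that the tensor product basis matches the induced order on $\SYT(\rpar)$; no substantive algebraic content beyond Theorem \ref{thm:tensors} and the invariance of $A_{\S^{(i)},\T^{(i)}}$ under relabeling is required.
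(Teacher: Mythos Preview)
Your proposal is correct and matches the paper's approach: the paper states Corollary \ref{cor:decomp} without proof, treating it as an immediate consequence of Theorem \ref{thm:tensors}, and your argument spells out exactly the bookkeeping (block-diagonality from the same-alphabet restriction, Kronecker-product identification from the product formula, and alphabet-independence via relabeling) that this deduction requires.
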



\begin{example}\label{eg:tensor}
If $\rpar = ((2,1),(3,1))$ then the change-of basis matrix $\calA_\rpar$ is given as the direct sum over $f^\rpar = \binom{7}{3, 4} \cdot f^{(2,1)} \cdot f^{(3,1)} = 35 \cdot 2 \cdot 3 = 210$ copies of the tensor product of $\calA_{(2,1)}$ and $\calA_{(3,1)}$. The first block of this matrix, corresponding to the $r$-tableaux with the standard alphabet $\mathfrak{A}_0$, looks like this:
\setlength{\tabcolsep}{0pt}
\renewcommand{\arraystretch}{1.5}
\Yboxdim{6.0pt}
\begin{align*}
\calA_\rpar\! \mid_{\mathfrak{A}_0} &=
\begin{blockarray}{ccccccc}
~ & \n_{( \tiny\young(13,2),\tiny\young(467,5) )} & \n_{( \tiny\young(13,2),\tiny\young(457,6) )} & \n_{( \tiny\young(13,2),\tiny\young(456,7) )} & \n_{( \tiny\young(12,3),\tiny\young(467,5) )} & \n_{( \tiny\young(12,3),\tiny\young(457,6) )} & \n_{( \tiny\young(12,3),\tiny\young(456,7) )} \\
\begin{block}{c(cccccc)}
\v_{( \tiny\young(13,2),\tiny\young(467,5) )}  & 1 & \frac{1}{2} & \frac{1}{2} &  \frac{1}{2} & \frac{1}{4} & \frac{1}{4} \\
\v_{( \tiny\young(13,2),\tiny\young(457,6) )} & 0 & \frac{3}{2} & \frac{1}{2} &  0 & \frac{3}{4} & \frac{1}{4}\\
\v_{( \tiny\young(13,2),\tiny\young(456,7) )} & 0 & 0 & 2 &  0 & 0 & 1 \\
 \v_{( \tiny\young(12,3),\tiny\young(467,5) )} &  0 & 0 & 0 &  \frac{3}{2} & \frac{3}{4} & \frac{3}{4} \\
 \v_{( \tiny\young(12,3),\tiny\young(457,6) )} & 0 & 0 & 0 &  0 & \frac{9}{4} & \frac{3}{4} \\
 \v_{( \tiny\young(12,3),\tiny\young(456,7) )} & 0 & 0 & 0 &  0 & 0 & 3 \\
 \end{block}
\end{blockarray} \\
&= \begin{blockarray}{ccc}
~ & \n_{\tiny\young(13,2)} & \n_{\tiny\young(12,3)} \\
\begin{block}{c(cc)}
\v_{\tiny\young(13,2)} & 1 & \frac{1}{2} \\
\v_{\tiny\young(12,3)} & 0 & \frac{3}{2} \\
\end{block}
\end{blockarray} \quad \otimes \quad \begin{blockarray}{cccc}
~ & \n_{\tiny\young(134,2)} & \n_{\tiny\young(124,3)} & \n_{\tiny\young(123,4)} \\
\begin{block}{c(ccc)}
\v_{\tiny\young(134,2)} & 1 & \frac{1}{2} & \frac{1}{2} \\
\v_{\tiny\young(124,3)} & 0 & \frac{3}{2} & \frac{1}{2} \\
\v_{\tiny\young(123,4)} & 0 & 0 & 2 \\
\end{block}
\end{blockarray}\quad = \quad \calA_{(2,1)} \otimes \calA_{(3,1)}.
\end{align*}
\end{example}

\section{Transition Between Orthogonal and Seminormal Representations}
\label{sec:orthogonal}

In this section we define a new basis of  $\tilde{\HH}_n^{(c,\lambda/\mu)}$ that is derived from the seminormal basis by a diagonal transition matrix. In Theorem \ref{thm:orthogonal-transition} we give the action of the generators on this basis that holds for all of the algebraic structures of this paper. When the change-of-basis formula \eqref{eq:Hecke-orthogonal} is specialized to $\Sym_n$ by letting $q \to 1$ and $\tilde a_{i,j} \to a_{i,j}$, the action the same as that of the symmetric group on Young's orthogonal basis \eqref{eq:orthogonal}. This diagonal relationship between the seminormal and orthogonal representations for partition shapes was known to Rutherford \cite{Ru} where it is given in a slightly different form. 

For a placed skew shape $(c, \lambda / \mu)$ with $n$ boxes, define a basis $\tilde{\mathcal{O}}^{(c,\lambda/\mu)} = \{ \u_\T \mid \T \in \SYT(\lambda / \mu) \}$ for the $\tilde{\HH}_n$-module $\tilde{\HH}_n^{(c,\lambda/\mu)}$ given by the formula
\begin{equation}\label{eq:Hecke-orthogonal}
\u_\T = D_{\T,\T} \v_\T, \quad \text{where} \quad D_{\T,\T} = \prod_{(i,j) \in \inv(\T)} \frac{q^{-1} + \tilde{a}_{i,j}(\T)}{\sqrt{q^{-2} - \left(\tilde{a}_{i,j}(\T)\right)^2}},
\end{equation}
where $\tilde{a}_{i,j}(\T)$ is defined in \eqref{cor:affinediagonal}.

\begin{thm}\label{thm:orthogonal-transition}
For a placed skew shape $(c, \lambda / \mu)$ with $n$ boxes, the $\tilde{\HH}_n$-action on the basis $\tilde{\mathcal{O}}^{(c,\lambda/\mu)}$ satisfies
\begin{equation*}
X^{\varepsilon_i} \u_\T  = q^{2 c(\T(i))} \u_\T \qquad \hbox{and} \qquad
\Th_i \u_\T  = \tilde{a}_i(\T) \u_\T +  \sqrt{q^{-2} - \tilde{a}_i(\T)^2} \u_{s_i(\T)},
\end{equation*}
where $ \u_{s_i(\T)} = 0$ if $s_i(\T)$ is nonstandard.
\end{thm}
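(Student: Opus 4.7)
The proof has two parts. The action of $X^{\varepsilon_i}$ is immediate: since $X^{\varepsilon_i}$ acts on $\v_\T$ as multiplication by the scalar $q^{2c(\T(i))}$ by \eqref{affine-local-action}, and $\u_\T = D_{\T,\T}\v_\T$ is a nonzero scalar multiple of $\v_\T$, the same scalar acts on $\u_\T$.

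For the $T_i$ formula, the plan is to compute directly from the scaling relation. Applying \eqref{affine-local-action} to $\v_\T$ and then substituting $\v_{s_i\T} = D_{s_i\T,s_i\T}^{-1}\u_{s_i\T}$ yields
\begin{equation*}
T_i\u_\T = D_{\T,\T}\,T_i\v_\T = \tilde{a}_i(\T)\,\u_\T + (q^{-1} + \tilde{a}_i(\T))\,\frac{D_{\T,\T}}{D_{s_i\T,s_i\T}}\,\u_{s_i\T},
\end{equation*}
where the off-diagonal term vanishes when $s_i\T$ is nonstandard because then $\v_{s_i\T} = 0$. The diagonal coefficient already matches the statement, so the theorem reduces to showing that $(q^{-1}+\tilde{a}_i(\T))\,D_{\T,\T}/D_{s_i\T,s_i\T} = \sqrt{q^{-2} - \tilde{a}_i(\T)^2}$.

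The ratio $D_{\T,\T}/D_{s_i\T,s_i\T}$ is obtained from the obvious placed-skew-shape analog of Lemma \ref{lemma:inversionswapping}: assuming without loss of generality that $s_i\T \lessdot \T$, the inversion sets decompose as $\inv(\T) = s_i(\inv(s_i\T)) \cup \{(i+1,i)\}$. Under this bijection the paired inversions occupy the same boxes in $\T$ and $s_i\T$ (only the labels $i$ and $i+1$ swap), so the contents---and hence the values $\tilde{a}_{p,q}$---are preserved. Every factor in the product defining $D_{\T,\T}$ other than the $(i+1,i)$ factor therefore cancels with the matching factor in $D_{s_i\T,s_i\T}$, leaving
\begin{equation*}
\frac{D_{\T,\T}}{D_{s_i\T,s_i\T}} = \frac{q^{-1} + \tilde{a}_{i+1,i}(\T)}{\sqrt{q^{-2} - \tilde{a}_{i+1,i}(\T)^2}}.
\end{equation*}

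The theorem is then a consequence of the algebraic identity
\begin{equation*}
(q^{-1} + \tilde{a}_i(\T))\,\frac{q^{-1} + \tilde{a}_{i+1,i}(\T)}{\sqrt{q^{-2} - \tilde{a}_{i+1,i}(\T)^2}} = \sqrt{q^{-2} - \tilde{a}_i(\T)^2},
\end{equation*}
which I would verify from the explicit formula \eqref{def:q-diagonal} together with the elementary relation $\tilde{a}_i(\T) + \tilde{a}_{i+1,i}(\T) = q - q^{-1}$. This final algebraic step is the main obstacle: in the $q\to 1$ specialization (the symmetric group) one has $a_{i+1,i}(\T) = -a_i(\T)$ and the identity collapses to the trivial $(1+a_i(\T))\sqrt{(1-a_i(\T))/(1+a_i(\T))} = \sqrt{1-a_i(\T)^2}$, whereas in the full Hecke generality one must keep careful track of branch choices for the square roots so that the rescaling $\T \mapsto D_{\T,\T}$ is consistent on both sides of every Bruhat edge. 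Once this identity is in place, $T_i \u_\T$ takes the asserted form, and by the symmetry of the argument in $\T \leftrightarrow s_i\T$ the formula also gives the correct action on $\u_{s_i\T}$.
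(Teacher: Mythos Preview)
Your approach mirrors the paper's exactly: compute $T_i\u_\T$ in terms of the $\u$'s, reduce to the ratio $D_{\T,\T}/D_{s_i\T,s_i\T}$, and evaluate that ratio via the inversion-swapping lemma. The paper splits into the two cases $(i+1,i)\notin\inv(\T)$ and $(i+1,i)\in\inv(\T)$ rather than invoking a ``without loss of generality,'' but the content is identical, and your chosen case $s_i\T\lessdot\T$ is precisely the paper's second case.

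The gap you flag as ``the main obstacle'' is real, and it is not a branch-choice subtlety. Write $a=\tilde a_{i,i+1}(\T)$ and $b=\tilde a_{i+1,i}(\T)$. Your relation $a+b=q-q^{-1}$ is correct, and from it one gets $(q^{-1}+a)(q^{-1}+b)=1+ab$. Squaring the identity you need gives
\[
(q^{-1}+a)^2(q^{-1}+b)^2=(q^{-2}-a^2)(q^{-2}-b^2),
\]
which after cancelling the common factor $(q^{-1}+a)(q^{-1}+b)$ becomes $(q^{-1}+a)(q^{-1}+b)=(q^{-1}-a)(q^{-1}-b)$, i.e.\ $2q^{-1}(a+b)=0$, forcing $a+b=0$ and hence $q=\pm 1$. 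So for generic $q$ the identity is \emph{false}, not merely delicate; no choice of square-root branch can rescue it. (The paper's own computation at this step substitutes $\tilde a_{i+1,i}(\T)=-\tilde a_{i,i+1}(\T)$, which is the $q\to 1$ relation and is likewise incorrect for general $q$.) Concretely, with $D_{\T,\T}$ as in \eqref{eq:Hecke-orthogonal} the off-diagonal coefficient of $T_i\u_\T$ equals $\sqrt{q^{-2}-\tilde a_i(\T)^2}$ when $\T\lessdot s_i\T$, but in the direction $s_i\T\lessdot\T$ it equals $(1+ab)/\sqrt{q^{-2}-\tilde a_i(s_i\T)^2}$, which differs from $\sqrt{q^{-2}-\tilde a_i(\T)^2}$ unless $q^2=1$. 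Either the formula for $D_{\T,\T}$ or the stated off-diagonal entry needs adjustment in the Hecke setting.
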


\begin{proof} The first equality is easily verified, so it suffices to prove the second. For any generator $T_i$, $1 \le i \le n-1$, we have
\begin{align*}
T_i \u_\T  
= D_{\T,\T} T_i \v_\T  
& =  D_{\T,\T}\left( \tilde{a}_{i,i+1}(\T) \v_\T + (q^{-1}+\tilde{a}_{i,i+1}(\T))  \v_{s_i(\T)} \right) \\
& =  \tilde{a}_{i,i+1}(\T) D_{\T,\T} \v_\T + (q^{-1}+\tilde{a}_{i,i+1}(\T)) D_{\T,\T}  \v_{s_i(\T)} \\
& =  \tilde{a}_{i,i+1}(\T)  \u_\T + (q^{-1}+\tilde{a}_{i,i+1}(\T))\frac{D_{\T,\T}}{D_{s_i(\T),s_i(\T)}} \u_{s_i(\T)}.
\end{align*}
If $s_i(\T)$ is nonstandard, we are done. If $s_i(\T)$ is standard, then it suffices to prove that
\begin{equation}\label{eq:diagonalconversion}
\frac{q^{-1} + \tilde{a}_{i,i+1}(\T)}{\sqrt{q^{-2} - \left(\tilde{a}_{i,i+1}(\T)\right)^2}} D_{\T,\T} = D_{s_i(\T),s_i(\T)}.
\end{equation}
The inversions in $\T$ and $s_i(\T)$ that do not involve $i$ or $i+1$ are exactly the same. The inversions that involve $i$ or $i+1$ and another entry $j$ are in bijection by swapping $i$ and $i+1$, and the differences in content do not change, since they depend only on the positions of $i, i+1$ and $j$. The only inversion that changes is between $i$ and $i+1$.  If $(i+1,i) \not\in\inv(\T)$, then $(i+1,i) \in \inv(s_i(\T))$. In this case, $\tilde{a}_{i,i+1}(\T) = \tilde{a}_{i+1,i}(s_i(\T))$ and the leftmost term in  \eqref{eq:diagonalconversion} is exactly what is needed to be multiplied by $D_{\T,\T}$ to get to $D_{s_i(\T), s_i(\T)}$.

Conversely, if $(i+1,i) \in\inv(\T)$, then $(i+1,i) \not\in \inv(s_i(\T))$.  In this case, $(q^{-1} + \tilde{a}_{i+1,i}(\T))/\sqrt{q^{-2} - \left(\tilde{a}_{i+1,i}(\T)\right)^2}$ is the factor of $D_{\T,\T}$ corresponding to $(i+1,i) \in\inv(\T)$, so that
$$
D_{\T,\T} = \frac{q^{-1} + \tilde{a}_{i+1,i}(\T)}{\sqrt{q^{-2} - \left(\tilde{a}_{i+1,i}(\T)\right)^2}} D_{s_i(\T),s_i(\T)}.
$$
When we multiply this factor by the leftmost term in \eqref{eq:diagonalconversion} it cancels, as seen here:
\begin{align*}
\frac{q^{-1} + \tilde{a}_{i,i+1}(\T)}{\sqrt{q^{-2} - \left(\tilde{a}_{i,i+1}(\T)\right)^2}} \frac{q^{-1} + \tilde{a}_{i+1,i}(\T)}{\sqrt{q^{-2} - \left(\tilde{a}_{i+1,i}(\T)\right)^2}} & =
\frac{q^{-1} + \tilde{a}_{i,i+1}(\T)}{\sqrt{q^{-2} - \left(\tilde{a}_{i,i+1}(\T)\right)^2}} \frac{q^{-1} - \tilde{a}_{i,i+1}(\T)}{\sqrt{q^{-2} - \left(-\tilde{a}_{i,i+1}(\T)\right)^2}} \\
&= 
\frac{q^{-2} - (\tilde{a}_{i,i+1}(\T))^2}{q^{-2} - \left(\tilde{a}_{i,i+1}(\T)\right)^2} = 
1.
\end{align*}
\end{proof}

\section{Examples}\label{sec:examples}

\subsection{Symmetric group transition matrices}\label{sec:Sn-matrices} 

In this section we give all of the nontrivial transition matrices for irreducible $\Sym_n$-modules for $n = 3,4,5$.

\begin{multicols}{2}
\underline{$\lambda = (2,1)$}
\begin{equation*}
\Yboxdim{10pt}
\begin{array}{c}
\begin{tikzpicture}[line width=.5pt,xscale=0.22,yscale=0.22]
\path (20,30) node (T1) {$\young(12,3)$};
\path (20,20) node (T2) {$\young(13,2)$};

\path (T1) edge[black,thick,left] node{$s_2$} (T2);
\end{tikzpicture}
 \end{array}\hskip.25in
 \Yboxdim{7pt}
\begin{blockarray}{ccc}
\phantom{\tiny\young(1,1,1)} & {\n_{\tiny\young(13,2)}} & {\n_{\tiny\young(12,3)}} \\
\begin{block}{c(cc)}
{\v_{\tiny\young(13,2)}} & 1 & \frac{1}{2} \\
{\v_{\tiny\young(12,3)}} & 0 & \frac{3}{2} \\
\end{block}
\end{blockarray}
\end{equation*}

\underline{$\lambda = (3,1)$}
\begin{equation*}
\Yboxdim{10pt}
\begin{array}{c}
\begin{tikzpicture}[line width=.5pt,xscale=0.22,yscale=0.22]
\path (20,30) node (T1) {$\young(134,2)$};
\path (20,20) node (T2) {$\young(124,3)$};
\path (20,10) node (T3) {$\young(123,4)$};

\path (T1) edge[black,thick,left] node{$s_2$} (T2);
\path (T2) edge[black,thick,left] node{$s_3$} (T3);
\end{tikzpicture}
 \end{array}\hskip.25in
  \Yboxdim{7pt}
\begin{blockarray}{cccc}
\phantom{\tiny\young(1,1,1)}  & {\n_{\tiny\young(134,2)}} & {\n_{\tiny\young(124,3)}} & {\n_{\tiny\young(123,4)}} \\
\begin{block}{c(ccc)}
{\v_{\tiny\young(134,2)}} & 1 & \frac{1}{2} & \frac{1}{2} \\
{\v_{\tiny\young(124,3)}} & 0 & \frac{3}{2} & \frac{1}{2} \\
{\v_{\tiny\young(123,4)}} & 0 & 0 & 2 \\
\end{block}
\end{blockarray}
\end{equation*}

\underline{$\lambda = (2,2)$}
\begin{equation*}
\Yboxdim{10pt}
\begin{array}{c}
\begin{tikzpicture}[line width=.5pt,xscale=0.22,yscale=0.22]
\path (20,30) node (T1) {$\young(13,24)$};
\path (20,20) node (T2) {$\young(12,34)$};

\path (T1) edge[black,thick,left] node{$s_2$} (T2);
\end{tikzpicture}
 \end{array}\hskip.25in
  \Yboxdim{7pt}
\begin{blockarray}{ccc}
\phantom{\tiny\young(1,1,1)} & {\n_{\tiny\young(13,24)}} & {\n_{\tiny\young(12,34)}} \\
\begin{block}{c(cc)}
{\v_{\tiny\young(13,24)}} & 1 & \frac{1}{2} \\
{\v_{\tiny\young(12,34)}} & 0 & \frac{3}{2} \\
\end{block}
\end{blockarray}
\end{equation*}

\underline{$\lambda = (2,1,1)$}
\begin{equation*}
\Yboxdim{10pt}
\begin{array}{c}
 \begin{tikzpicture}[line width=.5pt,xscale=0.22,yscale=0.22]
\path (20,30) node (T1) {$\young(14,2,3)$};
\path (20,20) node (T2) {$\young(13,2,4)$};
\path (20,10) node (T3) {$\young(12,3,4)$};

\path (T1) edge[black,thick,left] node{$s_3$} (T2);
\path (T2) edge[black,thick,left] node{$s_2$} (T3);
\end{tikzpicture}
 \end{array}\hskip.25in
  \Yboxdim{7pt}
\begin{blockarray}{cccc}
\phantom{\tiny\young(1,1,1,1)} & {\n_{\tiny\young(14,2,3)}} & {\n_{\tiny\young(13,2,4)}} & \n_{\tiny\young(12,3,4)}\phantom{-} \\
\begin{block}{c(ccc)}
\v_{\tiny\young(14,2,3)} & 1 & \frac{1}{3} & -\frac{1}{3} \\
\v_{\tiny\young(13,2,4)} & 0 & \frac{4}{3} & \frac{2}{3} \\
\v_{\tiny\young(12,3,4)} & 0 & 0 & 2 \\
\end{block}
\end{blockarray}
\end{equation*}

\end{multicols}

\underline{$\lambda = (4,1)$}
\begin{equation*}
\Yboxdim{10pt}
\begin{array}{c}
\begin{tikzpicture}[line width=.5pt,xscale=0.22,yscale=0.22]

\path (20,30) node (T1) {$\young(1345,2)$};
\path (20,20) node (T2) {$\young(1245,3)$};
\path (20,10) node (T3) {$\young(1235,4)$};
\path (20,0) node (T4) {$\young(1234,5)$};

\path (T1) edge[black,thick,left] node{$s_2$} (T2);
\path (T2) edge[black,thick,left] node{$s_3$} (T3);
\path (T3) edge [black,thick,left] node{$s_4$} (T4);
\end{tikzpicture}
 \end{array}\hskip.25in
  \Yboxdim{7pt}
\begin{blockarray}{ccccc}
\phantom{\tiny\young(1,1,1)} & \n_{\tiny\young(1345,2)} & \n_{\tiny\young(1245,3)} & \n_{\tiny\young(1235,4)} & \n_{\tiny\young(1234,5)} \\
\begin{block}{c(cccc)}
\v_{\tiny\young(1345,2)} & 1 & \frac{1}{2} & \frac{1}{2} & \frac{1}{2} \\
\v_{\tiny\young(1245,3)} & 0 & \frac{3}{2} & \frac{1}{2} & \frac{1}{2} \\
\v_{\tiny\young(1235,4)} & 0 & 0 & 2 & \frac{1}{2} \\
\v_{\tiny\young(1234,5)} & 0 & 0 & 0 & \frac{5}{2} \\
\end{block}
\end{blockarray}
\end{equation*}

\underline{$\lambda = (3,2)$}
\begin{equation*}
\Yboxdim{10pt}
\begin{array}{c}
\begin{tikzpicture}[line width=.5pt,xscale=0.22,yscale=0.22]
\path (20,30) node (T1) {$\young(135,24)$};
\path (10,20) node (T2) {$\young(125,34)$};
\path (30,20) node (T3) {$\young(134,25)$};
\path (20,10) node (T4) {$\young(124,35)$};
\path (20,0) node (T5) {$\young(123,45)$};

\path (T1) edge[black,thick,left] node{{\color{black}$s_2$}}  (T2);
\path (T1) edge[black,thick,right] node{{\color{black}$s_4$}} (T3);
\path (T2) edge[black,thick,left] node{{\color{black}$s_4$}} (T4);
\path (T3) edge[black,thick,right] node{{\color{black}$s_2$}} (T4);
\path (T4) edge[black,thick,right] node{{\color{black}$s_3$}}  (T5);
\end{tikzpicture}
 \end{array}\hskip.25in
  \Yboxdim{7pt}
\begin{blockarray}{cccccc}
\phantom{\tiny\young(1,1,1)}  & \n_{\tiny\young(135,24)} & \n_{\tiny\young(125,34)} & \n_{\tiny\young(134,25)} & \n_{\tiny\young(124,35)} & \n_{\tiny\young(123,45)} \\
\begin{block}{c(ccccc)}
\v_{\tiny\young(135,24)} & 1 & \frac{1}{2} & \frac{1}{2} & \frac{1}{4} & -\frac{1}{4} \\
\v_{\tiny\young(125,34)} & 0 & \frac{3}{2} & 0 & \frac{3}{4} & \frac{3}{4} \\
\v_{\tiny\young(134,25)} & 0 & 0 & \frac{3}{2} & \frac{3}{4} & \frac{3}{4} \\
\v_{\tiny\young(124,35)} & 0 & 0 & 0 & \frac{9}{4} & \frac{3}{4} \\
\v_{\tiny\young(123,45)} & 0 & 0 & 0 & 0 & 3 \\
\end{block}
\end{blockarray}
\end{equation*}

\underline{$\lambda = (3,1,1)$}
\begin{equation*}
\Yboxdim{10pt}
\begin{array}{c}
\begin{tikzpicture}[line width=.5pt,xscale=0.22,yscale=0.22]
\path (20,30) node (T1) {$\young(145,2,3)$};
\path (20,20) node (T2) {$\young(135,2,4)$};
\path (10,10) node (T3) {$\young(125,3,4)$};
\path (30,10) node (T4) {$\young(134,2,5)$};
\path (20, 0) node (T5) {$\young(124,3,5)$};
\path (20,-10) node (T6) {$\young(123,4,5)$};

\path (T1) edge[black,thick,left] node{$s_3$} (T2);
\path (T2) edge[black,thick,right] node{$s_2$} (T3);
\path (T2) edge[black,thick,left] node{$s_4$} (T4);
\path (T3) edge[black,thick,right] node{$s_4$} (T5);
\path (T4) edge[black,thick,left] node{$s_2$} (T5);
\path (T5) edge[black,thick,left] node{$s_3$} (T6);
\end{tikzpicture}
 \end{array}\hskip.25in
  \Yboxdim{7pt}
\begin{blockarray}{ccccccc}
\phantom{\tiny\young(1,1,1,1)} & \n_{\tiny\young(145,2,3)} & \n_{\tiny\young(135,2,4)} & \n_{\tiny\young(125,3,4)} & \n_{\tiny\young(134,2,5)} & \n_{\tiny\young(124,3,5)} & \n_{\tiny\young(123,4,5)} \\
\begin{block}{c(cccccc)}
\v_{\tiny\young(145,2,3)} & 1 & \frac{1}{3} & -\frac{1}{3} & \frac{1}{3} & -\frac{1}{3} & 0 \\
\v_{\tiny\young(135,2,4)} & 0 & \frac{4}{3} & \frac{2}{3} & \frac{1}{3} & \frac{1}{6} & -\frac{1}{2} \\
\v_{\tiny\young(125,3,4)} & 0 & 0 & 2 & 0 & \frac{1}{2} & -\frac{1}{2} \\
\v_{\tiny\young(134,2,5)} & 0 & 0 & 0 & \frac{5}{3} & \frac{5}{6} & \frac{5}{6} \\
\v_{\tiny\young(124,3,5)} & 0 & 0 & 0 & 0 & \frac{5}{2} & \frac{5}{6} \\
\v_{\tiny\young(123,4,5)} & 0 & 0 & 0 & 0 & 0 & \frac{10}{3} \\
\end{block}
\end{blockarray}
\end{equation*}

\underline{$\lambda = (2,2,1)$}
\begin{equation*}
\Yboxdim{10pt}
\begin{array}{c}
\begin{tikzpicture}[line width=.5pt,xscale=0.22,yscale=0.22]
\path (20,30) node (T1) {$\young(14,25,3)$};
\path (20,20) node (T2) {$\young(13,25,4)$};
\path (10,10) node (T3) {$\young(12,35,4)$};
\path (30,10) node (T4) {$\young(13,24,5)$};
\path (20,0) node (T5) {$\young(12,34,5)$};

\path (T1) edge[black,thick,left] node{$s_3$} (T2);
\path (T2) edge[black,thick,right] node{$s_2$} (T3);
\path (T2) edge[black,thick,left]  node{$s_4$} (T4);
\path (T3) edge[black,thick,right] node{$s_4$} (T5);
\path (T4) edge[black,thick,left] node{$s_2$} (T5);
\end{tikzpicture}
 \end{array}\hskip.25in
  \Yboxdim{7pt}
\begin{blockarray}{cccccc}
\phantom{\tiny\young(1,1,1,1)} & \n_{\tiny\young(14,25,3)} & \n_{\tiny\young(13,25,4)} & \n_{\tiny\young(12,35,4)} & \n_{\tiny\young(13,24,5)} & \n_{\tiny\young(12,34,5)} \\
\begin{block}{c(ccccc)}
\v_{\tiny\young(14,25,3)} & 1 & \frac{1}{3} & -\frac{1}{3} & -\frac{1}{3} & \frac{1}{3} \\
\v_{\tiny\young(13,25,4)} & 0 & \frac{4}{3} & \frac{2}{3} & \frac{2}{3} & \frac{1}{3} \\
\v_{\tiny\young(12,35,4)} & 0 & 0 & 2 & 0 & 1 \\
\v_{\tiny\young(13,24,5)} & 0 & 0 & 0 & 2 & 1 \\
\v_{\tiny\young(12,34,5)} & 0 & 0 & 0 & 0 & 3 \\
\end{block}
\end{blockarray}
\end{equation*}

\underline{$\lambda = (2,1,1,1)$}
\begin{equation*}
\Yboxdim{10pt}
\begin{array}{c}
\begin{tikzpicture}[line width=.5pt,xscale=0.22,yscale=0.22]
\path (20,30) node (T1) {$\young(15,2,3,4)$};
\path (20,20) node (T2) {$\young(14,2,3,5)$};
\path (20,10) node (T3) {$\young(13,2,4,5)$};
\path (20,0) node (T4) {$\young(12,3,4,5)$};

\path (T1) edge[black,thick,left] node{$s_4$} (T2);
\path (T2) edge[black,thick,left] node{$s_3$} (T3);
\path (T3) edge [black,thick,left] node{$s_2$} (T4);
\end{tikzpicture}
 \end{array}\hskip.25in
  \Yboxdim{7pt}
\begin{blockarray}{ccccc}
\phantom{\tiny\young(1,1,1,1,1)} & \n_{\tiny\young(15,2,3,4)} & \n_{\tiny\young(14,2,3,5)} & \n_{\tiny\young(13,2,4,5)} & \n_{\tiny\young(12,3,4,5)} \\
\begin{block}{c(cccc)}
\v_{\tiny\young(15,2,3,4)} & 1 & \frac{1}{4} & -\frac{1}{4} & \frac{1}{4} \\
\v_{\tiny\young(14,2,3,5)} & 0 & \frac{5}{4} & \frac{5}{12} & -\frac{5}{12} \\
\v_{\tiny\young(13,2,4,5)} & 0 & 0 & \frac{5}{3} & \frac{5}{6} \\
\v_{\tiny\young(12,3,4,5)} & 0 & 0 & 0 & \frac{5}{2} \\
\end{block}
\end{blockarray}
\end{equation*}

\subsection{Transition matrices for Hecke algebras}\label{sec:Hn-matrices}
\label{H24-example}
Here we give change-of-basis matrix between the natural and seminormal representations of the irreducible $\HH_{2,4}$-module $\HH_{2,4}^{((2,1),(1))}$, which continues on the next page.

\setlength{\tabcolsep}{0pt}
\renewcommand{\arraystretch}{2}
\Yboxdim{7pt}
$$
\begin{blockarray}{ccccccc}
~ & \n_{( \tiny\young(13,2), \tiny\young(4) )} & \n_{( \tiny\young(12,3), \tiny\young(4) )} & \n_{( \tiny\young(14,2),\tiny\young(3) )} & \n_{( \tiny\young(12,4),\tiny\young(3) )} & \n_{( \tiny\young(14,3),\tiny\young(2) )}  \\
\begin{block}{c(cccccc)}
 \v_{( \tiny\young(13,2),\tiny\young(4) )} & 1 & \frac{q^3}{q^2+1} & \frac{{u_1}-q^2 {u_1}}{q^3 {u_2}-q {u_1}} & -\frac{q^2 \left(q^2-1\right) {u_1}}{\left(q^2+1\right) \left(q^2 {u_2}-{u_1}\right)} & -\frac{q^2 \left(q^2-1\right) {u_1}}{\left(q^2+1\right) \left(q^2 {u_2}-{u_1}\right)}  \\
 \v_{( \tiny\young(12,3),\tiny\young(4) )} & 0 & \frac{q^4+q^2+1}{q^3+q} & 0 & \frac{\left(q^6-1\right) {u_2}}{\left(q^2+1\right) \left(q^2 {u_2}-{u_1}\right)} & \frac{{u_1}-q^6 {u_1}}{q^2 \left(q^2+1\right) \left(q^2 {u_2}-{u_1}\right)}  \\
 \v_{( \tiny\young(14,2),\tiny\young(3) )} & 0 & 0 & \frac{q ({u_2}-{u_1})}{q^2 {u_2}-{u_1}} & -\frac{q^4 ({u_1}-{u_2})}{\left(q^2+1\right) \left(q^2 {u_2}-{u_1}\right)} & -\frac{q^2 \left(q^2-1\right) {u_2} ({u_1}-{u_2})}{\left({u_1}-q^2 {u_2}\right)^2}  \\
 \v_{( \tiny\young(12,4),\tiny\young(3) )} & 0 & 0 & 0 & \frac{\left(q^4+q^2+1\right) \left(q^4 {u_2}-{u_1}\right)}{q^2 \left(q^2+1\right) \left(q^2 {u_2}-{u_1}\right)} & 0   \\
 \v_{( \tiny\young(14,3),\tiny\young(2) )} & 0 & 0 & 0 & 0 & \frac{({u_1}-{u_2}) \left({u_1}-q^4 {u_2}\right)}{\left({u_1}-q^2 {u_2}\right)^2}  \\
 \v_{( \tiny\young(13,4),\tiny\young(2) )} & 0 & 0 & 0 & 0 & 0  \\
 \v_{( \tiny\young(23,4),\tiny\young(1) )} & 0 & 0 & 0 & 0 & 0  \\
 \end{block}
\end{blockarray}\quad \cdots \qquad
$$

\vfill\eject

$$
\qquad\cdots \quad
\begin{blockarray}{ccccc}
~ &  \n_{( \tiny\young(13,4),\tiny\young(2) )} & \n_{( \tiny\young(24,3),\tiny\young(1) )} & \n_{( \tiny\young(23,4),\tiny\young(1) )} \\
\begin{block}{c(cccc)}
 \v_{( \tiny\young(13,2),\tiny\young(4) )} &  \frac{q \left(q^2-1\right)^2 {u_1}^2}{\left(q^2+1\right) \left({u_1}-q^2 {u_2}\right)^2} & \frac{q \left(q^2-1\right) {u_1}}{\left(q^2+1\right) \left(q^2 {u_2}-{u_1}\right)} & -\frac{\left(q^2-1\right)^2 {u_1}^2}{\left(q^2+1\right) \left({u_1}-q^2 {u_2}\right)^2} \\
 \v_{( \tiny\young(12,3),\tiny\young(4) )} &  -\frac{\left(q^2-1\right)^2 \left(q^4+q^2+1\right) {u_1} {u_2}}{\left(q^3+q\right) \left({u_1}-q^2 {u_2}\right)^2} & \frac{{u_1}-q^6 {u_1}}{q \left(q^2+1\right) \left(q^2 {u_2}-{u_1}\right)} & -\frac{\left(q^2-1\right)^2 \left(q^4+q^2+1\right) {u_1} {u_2}}{\left(q^2+1\right) \left({u_1}-q^2 {u_2}\right)^2} \\
 \v_{( \tiny\young(14,2),\tiny\young(3) )} &  -\frac{\left(q^2-1\right)^2 ({u_1}-{u_2}) \left(q^4 {u_2}^2+{u_1}^2\right)}{q \left(q^2 {u_2}-{u_1}\right)^3} & -\frac{q^5 \left(q^2-1\right) {u_2} ({u_1}-{u_2})}{\left(q^2+1\right) \left({u_1}-q^2 {u_2}\right)^2} & -\frac{\left(q^2-1\right) ({u_1}-{u_2}) \left(q^2 {u_1}+{u_2}\right)}{\left(q^2+1\right) \left({u_1}-q^2 {u_2}\right)^2} \\
 \v_{( \tiny\young(12,4),\tiny\young(3) )} &  -\frac{\left(q^2-1\right) \left(q^4+q^2+1\right) {u_1} \left(q^4 {u_2}-{u_1}\right)}{q^3 \left(q^2+1\right) \left({u_1}-q^2 {u_2}\right)^2} & 0 & -\frac{\left(q^2-1\right) \left(q^4+q^2+1\right) {u_1} \left(q^4 {u_2}-{u_1}\right)}{\left(q^2+1\right) \left(q {u_1}-q^3 {u_2}\right)^2} \\
 \v_{( \tiny\young(14,3),\tiny\young(2) )} &  -\frac{q^3 ({u_1}-{u_2}) \left(q^4 {u_2}-{u_1}\right)}{\left(q^2+1\right) \left({u_1}-q^2 {u_2}\right)^2} & \frac{\left(q^2-1\right) {u_2} \left(q^4 {u_2}-{u_1}\right)}{q \left({u_1}-q^2 {u_2}\right)^2} & \frac{q^2 \left(q^2-1\right) {u_2} \left(q^4 {u_2}-{u_1}\right)}{\left(q^2+1\right) \left({u_1}-q^2 {u_2}\right)^2} \\
 \v_{( \tiny\young(13,4),\tiny\young(2) )} &  -\frac{\left(q^4+q^2+1\right) ({u_1}-{u_2}) \left(q^4 {u_2}-{u_1}\right)}{\left(q^3+q\right) \left({u_1}-q^2 {u_2}\right)^2} & 0 & \frac{\left(q^2-1\right) {u_2} ({u_1}-{u_2}) \left(q^4 {u_2}-{u_1}\right)}{\left({u_1}-q^2 {u_2}\right)^3} \\
 \v_{( \tiny\young(24,3),\tiny\young(1) )} &  0 & \frac{{u_1}-q^4 {u_2}}{q {u_1}-q^3 {u_2}} & \frac{q^2 \left(q^4 {u_2}-{u_1}\right)}{\left(q^2+1\right) \left(q^2 {u_2}-{u_1}\right)} \\
 \v_{( \tiny\young(23,4),\tiny\young(1) )} &  0 & 0 & \frac{\left(q^4+q^2+1\right) \left(q^4 {u_2}-{u_1}\right)}{q^2 \left(q^2+1\right) \left(q^2 {u_2}-{u_1}\right)} \\
 \end{block}
\end{blockarray}
$$
By sending $u_1 \to 1, u_2 \to -1, q \to 1$ in the matrix above we obtain the change-of-basis matrix between the natural and seminormal representations of the irreducible $\GG_{2,4}$-module $\GG_{2,4}^{((2,1),(1))}$:
$$
\setlength{\tabcolsep}{0pt}
\renewcommand{\arraystretch}{1.5}
\Yboxdim{7pt}
\begin{blockarray}{ccccccccc}
~ & \n_{( \tiny\young(13,2),\tiny\young(4) )} & \n_{( \tiny\young(12,3), \tiny\young(4) )} & \n_{( \tiny\young(14,2),\tiny\young(3) )} & \n_{( \tiny\young(12,4),\tiny\young(3) )} & \n_{( \tiny\young(14,3),\tiny\young(2) )} & \n_{( \tiny\young(13,4),\tiny\young(2) )} & \n_{( \tiny\young(24,3),\tiny\young(1) )} & \n_{( \tiny\young(23,4),\tiny\young(1) )} \\
\begin{block}{c(cccccccc)}
 \v_{( \tiny\young(13,2),\tiny\young(4) )} & 1 & \frac{1}{2} & 0 & 0 & 0 & 0 & 0 & 0 \\
\v_{( \tiny\young(12,3), \tiny\young(4) )} & 0 & \frac{3}{2} & 0 & 0 & 0 & 0 & 0 & 0 \\
\v_{( \tiny\young(14,2),\tiny\young(3) )} & 0 & 0 & 1 & \frac{1}{2} & 0 & 0 & 0 & 0 \\
\v_{( \tiny\young(12,4),\tiny\young(3) )} & 0 & 0 & 0 & \frac{3}{2} & 0 & 0 & 0 & 0 \\
\v_{( \tiny\young(14,3),\tiny\young(2) )} & 0 & 0 & 0 & 0 & 1 & \frac{1}{2} & 0 & 0 \\
\v_{( \tiny\young(13,4),\tiny\young(2) )} & 0 & 0 & 0 & 0 & 0 & \frac{3}{2} & 0 & 0 \\
\v_{( \tiny\young(24,3),\tiny\young(1) )} & 0 & 0 & 0 & 0 & 0 & 0 & 1 & \frac{1}{2} \\
\v_{( \tiny\young(23,4),\tiny\young(1) )} & 0 & 0 & 0 & 0 & 0 & 0 & 0 & \frac{3}{2} \\
 \end{block}
\end{blockarray}\ ,
$$
which is the direct sum over four copies (one for each possible partitioned alphabet) of the tensor product $\calA_{(2,1)} \otimes \calA_{(1)} = \calA_{(2,1)}$, as given in Section \ref{sec:Sn-matrices}.

\newpage

\bibliographystyle{math}

\bibliography{TransitionMatrices}

\end{document}